\newtheorem{theorem}{Theorem}
\newtheorem{proposition}{Proposition}
\newtheorem{lemma}[proposition]{Lemma}
\newtheorem{corollary}[proposition]{Corollary}
\theoremstyle{definition}
\newtheorem{definition}[proposition]{Definition}
\theoremstyle{remark}
\newtheorem{remark}[proposition]{Remark}
\numberwithin{equation}{section}
\numberwithin{proposition}{section}
\newcommand\R{{\ensuremath {\mathbb R} }}
\newcommand\C{{\ensuremath {\mathbb C} }}
\newcommand\N{{\ensuremath {\mathbb N} }}
\newcommand\Z{{\ensuremath {\mathbb Z} }}
\newcommand\T{{\ensuremath {\mathbb T} }}
\renewcommand\phi{\varphi}
\renewcommand\le{\leqslant}
\renewcommand\ge{\geqslant}
\renewcommand\epsilon{\varepsilon}
\renewcommand\hat{\widehat}
\renewcommand\tilde{\widetilde}
\renewcommand\bar{\overline}
\newcommand{\cD}{\mathcal{D}}
\newcommand\ii{{\ensuremath {\infty}}}
\newcommand{\norm}[1]{ \left| \! \left| #1 \right| \! \right| }
\newcommand{\cM}{\mathcal{M}}
\newcommand{\cF}{\mathcal{F}}
\newcommand\1{{\ensuremath {\mathds 1} }}
\newcommand{\Sph}{\mathbb{S}}
\newcommand\cS{\mathcal{S}}
\DeclareMathOperator{\im}{Im}
\DeclareMathOperator{\re}{Re}
\DeclareMathOperator{\supp}{supp}
\newcommand{\cA}{\mathcal{A}}
\newcommand{\dps}{\displaystyle}
\title{Extremizers for the Airy--Strichartz inequality}
\author{Rupert L. Frank}
\address[R. Frank]{Mathematisches Institut, Ludwig-Maximilans Universit\"at M\"unchen, Theresienstr. 39, 80333 M\"unchen, Germany, and Department of Mathematics, California Institute of Technology, Pasadena, CA 91125, USA}
\email{r.frank@lmu.de}
\author{Julien Sabin}
\address[J. Sabin]{Laboratoire de Math\'ematiques d'Orsay, Universit\'e Paris-Sud, CNRS, Universit\'e Paris-Saclay, 91405 Orsay, France}
\email{Julien.Sabin@math.u-psud.fr}
\begin{document}

\begin{abstract}
We identify the compactness threshold for optimizing sequences of the Airy--Strichartz inequality as an explicit multiple of the sharp constant in the Strichartz inequality. In particular, if the sharp constant in the Airy--Strichartz inequality is strictly smaller than this multiple of the sharp constant in the Strichartz inequality, then there is an optimizer for the former inequality. Our result is valid for the full range of Airy--Strichartz inequalities (except the endpoints) both in the diagonal and off-diagonal cases.
\end{abstract}

\maketitle

\section{Introduction}

The solution of the Cauchy problem for the Airy equation
\begin{equation}\label{eq:airy}
  \begin{cases}
   \partial_t v +\partial_x^3 v = 0,\quad t\in\R,\quad x\in\R, \\
   v_{|t=0}=u\in L^2_x(\R)
  \end{cases}
\end{equation}
may be written as $v(t,x)=(e^{-t\partial_x^3}u)(x)$. This solution disperses as $|t|\to\ii$, as reflected by the Airy--Strichartz inequalities
\begin{equation}\label{eq:A-S}
  \int_\R\left(\int_\R\left||D_x|^\gamma(e^{-t\partial_x^3}u)(x)\right|^q\,dx\right)^{p/q} dt\le C\norm{u}_{L^2_x}^p
\end{equation}
due to Kenig, Ponce, and Vega \cite{KenPonVeg-91}. This inequality holds for any exponents satisfying the relations
\begin{equation}\label{eq:constraint-exponents}
2\le p,q<\infty,\quad-\gamma+\frac3p+\frac1q=\frac12,\quad -\frac12<\gamma\le\frac1p \,;
\end{equation}
see \cite[Thm. 2.1]{KenPonVeg-91} for the case $\gamma=1/p$. The other cases of the inequality can be obtained from this case using Sobolev's embedding theorem as in \cite[Thm. 2.4]{KenPonVeg-91}.

We will be interested in the optimal constant in \eqref{eq:A-S}, that is,
\begin{equation}\label{eq:best-airy}
 \cA_p:=\sup_{u\neq0}\frac{\dps\int_\R\left(\int_\R\left||D_x|^{1/p}(e^{-t\partial_x^3}u)(x)\right|^qdx\right)^{p/q}dt}{\norm{u}_{L^2}^p},
\end{equation}
where the supremum is taken over all \emph{complex valued} functions $u$. In \eqref{eq:best-airy}, we considered the case $\gamma=1/p$, which is critical in a certain sense and has a richer behavior than the case $\gamma<1/p$. Therefore we mostly focus on this case and comment briefly on the necessary modifications for $\gamma<1/p$ in Appendix \ref{sec:subcritical}. Our goal is to investigate the existence of maximizers for the maximization problem $\cA_p$ and, more generally, to understand the behaviour of maximizing sequences.

For $\gamma=1/p$ the constraint \eqref{eq:constraint-exponents} on the exponents $p,q$ becomes $2/p+1/q=1/2$, which is the same condition under which Strichartz inequalities are valid for the Schr\"odinger equation in one space dimension
\begin{equation}\label{eq:schrodinger}
  \begin{cases}
   i\partial_tv-3\partial_x^2v=0,\quad t\in\R,\quad x\in\R,\\
   v_{|t=0}=u\in L^2_x(\R).
  \end{cases}
\end{equation}
For this last equation, the solution may be written as $v(t,x)=(e^{-3it\partial_x^2}u)(x)$, and the Strichartz inequalities \cite{Strichartz-77,GinVel-79a,KeeTao-98} correspond to the finiteness of
\begin{equation}\label{eq:best-schro}
 \cS_p:=\sup_{u\neq0}\frac{\dps\int_\R\left(\int_\R\left|(e^{-3it\partial_x^2}u)(x)\right|^qdx\right)^{p/q}\,dt}{\norm{u}_{L^2}^p},
\end{equation}
with, as we said, the constraint $2/p+1/q=1/2$. 

The Airy--Strichartz inequality \eqref{eq:A-S} belongs to the wider class of \emph{Fourier extension inequalities}. Indeed, the space-time Fourier transform of $e^{-t\partial_x^3}u$ is supported on the curve $\eta=\xi^3$. Hence, the estimate \eqref{eq:A-S} may be seen as an information about the decay of the Fourier transform of functions supported on this cubic curve. Such estimates have a long history in harmonic analysis. For instance, when the curve is replaced by a compact hypersurface with non-zero curvature, this decay is the content of the Stein--Tomas theorem \cite{Stein-86,Tomas-75}. When the hypersurface is the paraboloid $\eta=3\xi^2$, this corresponds to the problem \eqref{eq:best-schro} for the Schr\"odinger equation. 

Lately, there have been several works concerning extremizers for such Fourier extension inequalities. One group of works tries to find the sharp values of the constants and to identify all optimizers. This can be done in some specific cases; for instance, Foschi \cite{Foschi-07} (see also \cite{HunZha-06,BenBezCarHun-09,Goncalves-17}) proved that for $p=q=6$ the only optimizers for \eqref{eq:best-schro} up to symmetries are Gaussians, which is also the case for $p=2q=4$ as proved in \cite{BenBezCarHun-09,Carneiro-09}. A similar result holds for the two-dimensional Strichartz inequality (with $p=q=4$) \cite{Foschi-07} and for the two-dimensional Stein--Tomas inequality on the sphere \cite{Foschi-15}, with constants being the only optimizers up to symmetries. The corresponding questions in other dimensions are still open, with some recent progress for the one-dimensional Stein--Tomas inequality in \cite{CarFosOliThi-17}. All these results rely on the evenness of the exponent of the Lebesgue space in which the Fourier extension lives, in order to apply the Parseval identity followed by some convolution identities. 

A second group of works examines the behaviour of extremizing sequences from the point of view of concentration-compactness. This originated in work by Kunze \cite{Kunze-03} for the problem \eqref{eq:best-schro} in one space dimension with $p=q=6$ and by Shao \cite{Shao-09} in higher dimensions (see also \cite{Quilodran-13} when the paraboloid is replaced by the cone, and \cite{JiaPauSha-10,JiaShaSto-14} for a fourth order version). For the Stein--Tomas problem on the sphere, this was carried out by Christ and Shao \cite{ChrSha-12a} in dimension two and by Shao \cite{Shao-15} in dimension one. In these last two works, the evenness of the Lebesgue exponent again plays a crucial role. The case of higher dimensions (without evenness of the Lebesgue exponent) was treated in \cite{FraLieSab-16}, which is the work that we rely on. The weakness of these methods is that they merely give the existence of optimizers, without identifying them. On the other hand, they give universal informations about optimizing sequences. Studying optimizing sequences may also lead to the non-existence of optimizers \cite{Quilodran-15,CarOli-16,CarOliSou-17}.

The question of existence of optimizers for the Airy--Strichartz inequality \eqref{eq:A-S} in the case $p=q=6$, $\gamma=1/p=1/6$ has already been considered by Shao \cite{Shao-09b}. However, we believe that his argument is not complete. Therefore, our goal in this paper is two-fold: first, to correct and complete the argument in \cite{Shao-09b} and, second, to consider arbitrary exponents $p>4$.

In general, the strategy to prove the existence of optimizers is to obtain some kind of compactness for optimizing sequences. There may be several ways to lose this compactness, and the heart of the matter is to identify them exactly. Sometimes, this loss of compactness comes from exact symmetries of the inequality and cannot be avoided. In this case, the best result one may hope for is precompactness of optimizing sequences up to applying well-chosen symmetry transformations to the sequence. What makes the problem interesting and difficult is the presence of `approximate symmetries' which cannot be removed by applying symmetry transformations and which might lead to a more subtle loss of compactness. We will discuss those in more detail below.

In our case, the Airy--Strichartz inequality \eqref{eq:A-S} has a large group of (exact) symmetries: for any $(t_0,x_0,\lambda_0)\in\R\times\R\times(0,\ii)$ and any $u\in L^2(\R)$, define the transformation
$$\forall x\in\R,\quad [g_{t_0,x_0,\lambda_0}u](x)=\lambda_0^{1/2}(e^{-t_0\partial_x^3}u)(\lambda_0x+x_0)$$
and the associated group $G$ of all these transformations
$$G:= \left\{g_{t,x,\lambda},\, (t,x,\lambda)\in\R\times\R\times(0,+\ii)\right\}.$$
The inequality \eqref{eq:A-S} is invariant under the group $G$, in the sense that for any $u\in L^2(\R)$ and for any $g\in G$, we have
$$\norm{\Psi_p[gu]}_{L^p_tL^q_x}=\norm{\Psi_p[u]}_{L^p_tL^q_x},\quad \norm{gu}_{L^2_x}=\norm{u}_{L^2_x},$$
where we used the shortcut notation for $(t,x)\in\R\times\R$,
$$\Psi_p[u](t,x):=|D_x|^{1/p}e^{-t\partial_x^3}u(x)=\frac{1}{\sqrt{2\pi}}\int_\R|\xi|^{1/p}e^{it\xi^3+ix\xi}\hat{u}(\xi)\,d\xi,$$
where $\hat{u}$ denotes the Fourier transform of $u$,
$$\forall \xi\in\R,\quad \hat{u}(\xi):=\frac{1}{\sqrt{2\pi}}\int_\R u(x)e^{-ix\xi}\,dx.$$
Motivated by the previous discussion, we introduce the following notion of compactness.

\begin{definition}
 A sequence $(u_n)\subset L^2(\R)$ is \emph{precompact up to symmetries} if there are $(g_n)\subset G$ such that $(g_nu_n)$ is precompact in $L^2(\R)$. 
\end{definition}

\begin{definition}
 A \emph{maximizing sequence} for $\cA_p$ is a sequence $(u_n)\subset L^2_x(\R)$ with $\norm{u_n}_{L^2_x}=1$ such that $\norm{\Psi_p[u_n]}_{L^p_tL^q_x}^p\to\cA_p$ as $n\to\ii$.
\end{definition}

With these notions at hand, we may state our main result.

\begin{theorem}\label{thm:main}
 Let $4<p<\infty$ and $q$ such that $2/p+1/q=1/2$. Then, all maximizing sequences for $\cA_p$ are precompact up to symmetries if and only if
 \begin{equation}\label{eq:main-condition}
  \cA_p>a_p\cS_p,
 \end{equation}
 where $\cA_p$ is defined in \eqref{eq:best-airy}, $\cS_p$ is defined in \eqref{eq:best-schro}, and
  \begin{equation}\label{eq:def-aq}
  a_p:= \frac{2^{p/2}}{\pi^{p/(2q)}} \left( \frac{\Gamma(\frac{q+1}{2})}{\Gamma(\frac{q+2}{2})} \right)^{p/q} =  
  \left(\frac{1}{2\pi}\int_0^{2\pi}(1+\cos\theta)^{q/2}\,d\theta\right)^{p/q}>1.
 \end{equation}
In particular, if \eqref{eq:main-condition} holds, then there is a maximizer of $\cA_p$. 
\end{theorem}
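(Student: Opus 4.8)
The plan is to follow the concentration-compactness scheme adapted to Fourier extension problems, in the form developed in \cite{FraLieSab-16}, and to identify the ``missing mass'' scenario with a profile living on the Schr\"odinger side. The key object is a \emph{profile decomposition} for bounded sequences in $L^2(\R)$ relative to the group $G$: given a maximizing sequence $(u_n)$ with $\norm{u_n}_{L^2}=1$, after passing to a subsequence one can write, for every $J\ge1$,
\begin{equation*}
 u_n = \sum_{j=1}^J g_n^{(j)} \phi^{(j)} + r_n^{(J)},
\end{equation*}
where $g_n^{(j)}\in G$, the parameters of $g_n^{(j)}$ and $g_n^{(k)}$ diverge from each other for $j\ne k$, $\sum_j \norm{\phi^{(j)}}_{L^2}^2 \le 1$, and the remainder is asymptotically negligible in the Strichartz-type norm: $\limsup_n \norm{\Psi_p[r_n^{(J)}]}_{L^p_tL^q_x}\to0$ as $J\to\infty$. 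The crucial point, and the main obstacle, is that the scaling parameters $\lambda_n^{(j)}$ may degenerate (go to $0$ or $\infty$), and in that regime the Airy evolution, seen on the appropriate frequency window, converges after rescaling to the Schr\"odinger evolution $e^{-3it\partial_x^2}$; this is the mechanism by which $\cS_p$ enters. Quantifying this limit — showing that a single bubble with divergent frequency scale contributes at most $a_p\cS_p\norm{\phi}_{L^2}^p$ to the extension norm, with the constant $a_p$ arising from integrating the stationary-phase Jacobian $(1+\cos\theta)^{q/2}$ over the fiber — is the technical heart, and I expect it to be the step requiring the most care (uniform decay estimates, oscillatory integral asymptotics, and control of cross terms in the decomposition).

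Granting the profile decomposition together with the asymptotic energy and norm orthogonality
\begin{equation*}
 \lim_{n\to\infty}\Bigl(1-\sum_{j=1}^J\norm{\phi^{(j)}}_{L^2}^2-\norm{r_n^{(J)}}_{L^2}^2\Bigr)=0,\qquad
 \limsup_{n\to\infty}\norm{\Psi_p[u_n]}_{L^p_tL^q_x}^p \le \sum_{j=1}^\infty \limsup_{n\to\infty}\norm{\Psi_p[g_n^{(j)}\phi^{(j)}]}_{L^p_tL^q_x}^p,
\end{equation*}
the argument splits each bubble into two cases according to whether its frequency scale stays bounded or diverges. If it stays bounded, one may normalize it so that $g_n^{(j)}$ reduces to a fixed (or convergent) transformation and the bubble contributes at most $\cA_p\norm{\phi^{(j)}}_{L^2}^p$; if it diverges, the rescaling limit gives a contribution at most $a_p\cS_p\norm{\phi^{(j)}}_{L^2}^p$. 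Since $a_p>1$, in both cases the bound is $a_p\cS_p\norm{\phi^{(j)}}_{L^2}^p$ whenever $\cA_p\le a_p\cS_p$, and one obtains
\begin{equation*}
 \cA_p = \limsup_{n\to\infty}\norm{\Psi_p[u_n]}_{L^p_tL^q_x}^p \le a_p\cS_p\sum_{j}\norm{\phi^{(j)}}_{L^2}^p \le a_p\cS_p\Bigl(\sup_j\norm{\phi^{(j)}}_{L^2}^{p-2}\Bigr)\sum_j\norm{\phi^{(j)}}_{L^2}^2 \le a_p\cS_p.
\end{equation*}
Under the strict inequality \eqref{eq:main-condition}, this forces $\sup_j\norm{\phi^{(j)}}_{L^2}^{p-2}$ to be as large as possible, i.e. there is exactly one nonzero profile $\phi^{(1)}$ with $\norm{\phi^{(1)}}_{L^2}=1$, all others vanish, the remainder vanishes in $L^2$, and its frequency scale must be bounded (otherwise we land on the $a_p\cS_p<\cA_p$ branch, a contradiction); hence $(g_n^{(1)})^{-1}u_n\to\phi^{(1)}$ in $L^2$, which is precompactness up to symmetries, and $\phi^{(1)}$ is an optimizer.

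For the converse direction — that precompactness of \emph{all} maximizing sequences forces \eqref{eq:main-condition} — I would argue by contraposition: if $\cA_p=a_p\cS_p$ (equality cannot be beaten from below by the Strichartz-bubble construction, which shows $\cA_p\ge a_p\cS_p$ always), then one builds an explicit maximizing sequence out of a near-optimizer for $\cS_p$, placed at frequency $\lambda_n\to\infty$ via the transformations in $G$; this sequence converges weakly to $0$ after any symmetry normalization, yet carries full extension norm in the limit, so it is a maximizing sequence that is not precompact up to symmetries. The inequality $a_p>1$ in \eqref{eq:def-aq}, which makes the dichotomy clean, is a direct computation: evaluating $\frac1{2\pi}\int_0^{2\pi}(1+\cos\theta)^{q/2}\,d\theta$ via the Beta integral gives $2^{q/2}\pi^{-1/2}\Gamma(\frac{q+1}{2})/\Gamma(\frac{q+2}{2})$, and since $q\ge2$ the average of $(1+\cos\theta)^{q/2}$ strictly exceeds the average of $1+\cos\theta$, which is $1$, by strict convexity of $t\mapsto t^{q/2}$ on $[0,2]$ (Jensen, with equality only for $q=2$, excluded here since $p>4$).
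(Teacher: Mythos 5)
The architecture you propose -- a profile decomposition with the asymptotic additivity
\begin{equation*}
 \limsup_{n\to\infty}\norm{\Psi_p[u_n]}_{L^p_tL^q_x}^p \;\le\; \sum_{j}\limsup_{n\to\infty}\norm{\Psi_p[g_n^{(j)}\phi^{(j)}]}_{L^p_tL^q_x}^p
\end{equation*}
-- is essentially the strategy of \cite{Shao-09b}, and it fails at precisely the point this paper is written to repair. The additivity is \emph{false} when two profiles concentrate at opposite frequencies $\pm\eta_n\to\infty$: their cross term in $\int_\R|\Psi_p[u_n]|^q\,dx$ does not vanish but homogenizes to the factor $\frac{1}{2\pi}\int_0^{2\pi}(1+a\cos\theta)^{q/2}\,d\theta$. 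Concretely, take $\hat{\chi_\epsilon}(\xi)=\hat{\chi}(\frac{\xi-1}{\epsilon})+\bar{\hat{\chi}(-\frac{\xi+1}{\epsilon})}$ with $\chi$ a Strichartz optimizer and $u_n=\chi_{\epsilon_n}/\norm{\chi_{\epsilon_n}}_{L^2}$. Lemma \ref{lem:energy-two-bubbles} gives $\norm{\Psi_p[u_n]}_{L^p_tL^q_x}^p\to a_p\cS_p$, whereas each of the two single-frequency bubbles carries $L^2$-mass $1/2$ and, by Remark \ref{rk:concentration-one-point}, contributes only $2^{-p/2}\cS_p$; the right side of your inequality is therefore $2^{1-p/2}\cS_p$, which for $p=6$ is $\cS_6/4$ against the true value $\tfrac52\cS_6$. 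Relatedly, your mechanism for $a_p$ is misattributed: a \emph{single} bubble escaping to frequency infinity contributes exactly $\cS_p\norm{\phi}_{L^2}^p$ with no extra factor; the constant $a_p$ arises only from the attractive interaction of a conjugate \emph{pair} of bubbles, through the identity $\frac{1}{2\pi}\int_0^{2\pi}|e^{i\theta}z+e^{-i\theta}\bar z|^q\,d\theta=2^{q/2}|z|^q\int_0^{2\pi}(1+\cos\theta)^{q/2}\frac{d\theta}{2\pi}$. (Note also that frequency modulations are not in $G$, so such bubbles cannot be written as $g_n^{(j)}\phi^{(j)}$ with $g_n^{(j)}\in G$ in the first place.)

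The same omission breaks your converse: a single near-optimizer for $\cS_p$ pushed to frequency $\lambda_n\to\infty$ satisfies $\limsup_n\norm{\Psi_p[u_n]}_{L^p_tL^q_x}^p=\cS_p<a_p\cS_p$, so if $\cA_p=a_p\cS_p$ it is \emph{not} a maximizing sequence; both the converse and the lower bound $\cA_p\ge a_p\cS_p$ require the two-bubble trial sequence. The paper's actual proof avoids a full profile decomposition: it splits $u_n$ into positive- and negative-frequency parts, extracts \emph{one conjugate pair} of profiles $(v_>,v_<)$ at frequencies $\pm\eta_n$ via the refined estimate (Corollary \ref{coro:refined}), separates this pair from the remainder with a generalized Br\'ezis--Lieb lemma (Proposition \ref{bl}), evaluates the pair's \emph{joint} contribution by the homogenization Lemma \ref{lem:homogenization-mixed} together with the fact that $a\mapsto\int_0^{2\pi}(1+a\cos\theta)^{q/2}d\theta$ is maximal at $a=1$, and controls the remainder by the definition of $\cA_p^*$ itself. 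If you wish to keep a profile-decomposition framework, the profiles must be conjugate pairs and the additivity must carry the factor $a_p$ for pairs escaping to infinite frequency; as written, the decomposition step of your argument is unsound.
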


\begin{remark}
 Since the evolution \eqref{eq:airy} preserves real-valuedness of functions, it is also natural to define the optimal constant
\begin{equation}\label{eq:best-airy-real}
 \cA_{p,\R}:=\sup_{u\in L^2_x(\R,\R)\setminus\{0\}}\frac{\dps\int_\R\left(\int_\R\left||D_x|^{1/p}(e^{-t\partial_x^3}u)(x)\right|^qdx\right)^{p/q}\,dt}{\norm{u}_{L^2}^p} \,,
\end{equation}
where the supremum is taken only over all \emph{real valued} functions $u$. Then, the exact same statement holds, replacing $\cA_p$ by $\cA_{p,\R}$, as we will show in Lemma \ref{lem:real-complex} at the end of Section \ref{sec:MMM}. (For the precompactness statement, we note also that the symmetry group $G$ preserves real-valuedness of functions.) Actually, we even have
$$
\cA_p=\cA_{p,\R}
$$
and there is a maximizer for $\cA_{p,\R}$ if and only if there is one for $\cA_p$. This extends to general extension problems, as we discuss in Appendix \ref{sec:symm}. The question whether $\cA_p$  is equal to $\cA_{p,\R}$ was raised by a referee, to whom we are most grateful, after an earlier version of this manuscript was submitted. We solved this problem using a technique from our previous paper \cite{FraLieSab-16}, but then learned by personal communication from D. Oliveira e Silva and R. Quilodr\'an, to whom we are also grateful, that the same proof was found independently in \cite{BroOliQui-18}.
\end{remark}

\begin{remark}
 In \cite{Shao-09b}, the necessary and sufficient condition for the existence of maximizers for $p=q=6$ are claimed to be $\cA_6>\cS_6$ and $\cA_{6,\R}>2^{-1/2}\cS_6$ and hence are not correct since $a_6=5/2$. We comment on this difference in Remark \ref{rk:shao}.
\end{remark}

\begin{remark}
In Theorem \ref{thm:subcrit} in the appendix we prove an analogous result for \eqref{eq:A-S} with $\gamma<1/p$. In this case maximizing sequences are always precompact up to symmetries, without an assumption like \eqref{eq:main-condition}.
\end{remark}

\begin{remark}
Since the constant $a_p$ is explicit, the inequality \eqref{eq:main-condition} may be tested once knowing an upper bound on $\cS_p$ and a lower bound for $\cA_p$. For $p=6$ and $p=8$ the maximization problem for $\cS_p$ is solved by Gaussians, which leads to an explicit value for the number $\cS_p$. It sounds natural (but is not proved so far) that $\cS_p$ is maximized by Gaussians for all $p>4$. This would turn the right side of \eqref{eq:main-condition} into an explicit number and then the condition can be tested by using trial function for the $\cA_p$ problem. To our knowledge, there is no conjecture on the precise form of maximizers for $\cA_p$.
\end{remark}

This theorem is the analogue of \cite[Thm 1.1]{FraLieSab-16}, where compactness of maximizing sequences is also conditional on a strict comparison inequality between the full problem and the problem on the paraboloid. This is due to the fact that both the cubic curve $\eta=\xi^3$ and the sphere are curved, implying that the paraboloid is the `local' model of these hypersurfaces (except at $\xi=0$ for the cubic curve). A maximizing sequence which would concentrate at a point would then, after rescaling, only see the problem for the paraboloid (see Remark \ref{rk:concentration-one-point} below) and hence be a test function for $\cS_p$. However, concentration around a point is clearly an obstacle to compactness, which explains the presence of $\cS_p$ in the condition \eqref{eq:main-condition} to rule out this concentration around a point. From a broader perspective, relating compactness to a strict `energy' inequality is a standard fact in several optimization problems: to prove the existence of a ground state for Schr\"odinger operators \cite[Thm. 11.5]{LieLos-01}, in the Br\'ezis-Nirenberg \cite[Lem. 1.2]{BreNir-83} or Yamabe \cite{Aubin-76} problems, and is even one of the main building blocks of the concentration-compactness theory of Lions \cite{Lions-84} (the so-called `strict sub-additivity conditions'). Let us notice that the idea of `embedding' the Schr\"odinger equation in the Airy equation also appeared in a nonlinear setting, as was noted for instance in \cite{Tao-07,ChrColTao-03,KilShaVis-12}.

What is peculiar in the strict inequality \eqref{eq:main-condition} is the presence of the constant $a_p$. The fact that $a_p$ is strictly larger than $1$ follows from the strict convexity of $x\mapsto x^{q/2}$ and Jensen's inequality since $d\theta/(2\pi)$ is a probability measure on $[0,2\pi]$.
One of the striking features of our problem is that the main enemy is not concentration around one point, but rather \emph{concentration around two opposite points} (or antipodal in the case of the sphere). As it turns out (see Lemma \ref{lem:energy-two-bubbles} and Remark \ref{rk:concentration-one-point} below), it is `energetically' more favorable to concentrate around such a pair a points (with an equally split $L^2$-mass), rather than at a single point. This is related to the fundamentally non-local nature of the Fourier transform, which implies that `bubbles' concentrating at different points on the hypersurface may have a Fourier transforms that `interact', and in the case of these two special points, that may interact `attractively'. This mechanism has been discovered in \cite{ChrSha-12a} in the case of the sphere, and does not occur for the paraboloid, since no pair of points on the paraboloid have opposite normals. It does not happen either for other model optimization problems like sharp Sobolev (or Br\'ezis-Nirenberg, Yamabe inequalities), since there the underlying operator is `local' (the embedding $H^1$ or $\dot{H}^1\hookrightarrow L^q$). Such two-point resonance requires specific tools, that we started to develop in \cite{FraLieSab-16}, and investigate further in the present article.

In a certain sense our present work is complementary to \cite{FraLieSab-16}. The loss of compactness that we have to overcome here comes from the fact that the modulation symmetry of the Strichartz inequality \eqref{eq:best-schro} is broken for the Airy equation and becomes an `approximate symmetry' for \eqref{eq:best-airy}. The scaling symmetry of \eqref{eq:best-schro}, however, is still an exact symmetry for \eqref{eq:best-airy}. Conversely, in our work on the Stein--Tomas inequality the scaling symmetry was an `approximate symmetry', while the modulation symmetry was an exact symmetry. For this reason the problems are different on a technical level.

It is worth mentioning that the condition \eqref{eq:main-condition} does not appear in the works \cite{ChrSha-12a} or \cite{Shao-15}: there, precompactness of optimizing sequences is unconditional. Likewise, we were able in \cite{FraLieSab-16} to prove that the condition analogue to \eqref{eq:main-condition} on the sphere was satisfied if maximizers of the Strichartz inequality \eqref{eq:best-schro} were Gaussians, as is known in dimension one and two. Indeed, if $\cS_p$ is attained for Gaussians, then one may `glue' two Gaussians at two antipodal points on the sphere at a scale $\epsilon>0$. As $\epsilon\to0$, the `energy' of such a test function converges to $a_p\cS_p$ (as in Lemma \ref{lem:energy-two-bubbles}). One is even able to compute the next order in $\epsilon$ of the energy, which turns out to be positive: this implies the strict inequality \eqref{eq:main-condition} on the sphere. In our setting of the cubic curve $\eta=\xi^3$, one may try the same strategy since we know that in one space dimension and for $p=q=6$ the optimizers of \eqref{eq:best-schro} are Gaussians. Unfortunately, the next order of the energy is negative, which does not allow to deduce \eqref{eq:main-condition}.

A standard approach to proving results of the flavor of Theorem \ref{thm:main} is to consider a maximizing sequence and to `follow the mass'. We show that either the mass escapes to small scales around a fixed frequency (which is then ruled out by condition \eqref{eq:main-condition}), or it must be precompact up to symmetries. Such a strong dichotomy result for quite general sequences (being a maximizing sequence does not tell much about the sequence) usually follows from refined versions of the considered inequality. We prove such a refined Airy--Strichartz estimate in Section \ref{sec:refined}. Since we are in one space dimension, a refined inequality may be proved in a quite elementary fashion using the Hausdorff--Young inequality. In higher dimensions, it often relies on deep bilinear estimates such as the ones obtained by Tao \cite{Tao-03}.

Once concentration to small scales is ruled out, the refined inequality typically implies the existence of a non-zero \emph{weak} limit up to symmetry for the maximizing sequence. Upgrading this weak convergence to strong convergence is the content of the \emph{Method of the Missing Mass} that was invented by Lieb \cite{Lieb-83b} in the context of the Hardy--Littewood--Sobolev inequality and used later on, for instance, in \cite{FraLie-12,FraLie-15,FraLieSab-16}. A useful tool in order to apply this method is the Br\'ezis--Lieb lemma \cite{Lieb-83b,BreLie-83}. Here, we need a slightly more general version of this lemma (due to both the presence of mixed Lebesgue spaces and of these two attractively interacting bubbles), that we prove in Appendix \ref{app:BL}. 

The article is organized as follows. In Section \ref{sec:MMM}, we apply and adapt the Method of the Missing Mass to our context. In particular, we choose to present the full proof of Theorem \ref{thm:main} first, and to prove the needed technical results in the following sections. We provide the proof in the complex-valued case, and only make remarks on why it also works in the real-valued case. In Section \ref{sec:refined}, we present the refined Airy--Strichartz inequality. In Section \ref{sec:approximate-operators}, we study some approximate Airy--Strichartz maps and provide some convergence results about them. These results, although a bit technical, carry some crucial features of the proof. Finally, some auxiliary results are provided in the appendices. 

\medskip

\noindent\textbf{Acknowledgments.} R.L.F. would like to thank Terence Tao for suggesting to look at this problem for general $p$ and to Diogo Oliveira e Silva, R\'ene Quilodr\'an and an anonymous referee for discussions concerning the $\cA_{p,\R}$ problem. Partial support through US National Science Foundation grant DMS-1363432 (R.L.F.) is also acknowledged.


\section{Proof of Theorem \ref{thm:main}: Method of the Missing Mass}\label{sec:MMM}

The Method of the Missing Mass shows that a sufficient condition for precompactness up to symmetries is the existence of a non-zero weak limit up to symmetries. As a consequence, we will need the following definition.

\begin{definition}
 Let $(u_n)\subset L^2(\R)$. We write $u_n\rightharpoonup_{\text{sym}}0$ if for all $(g_n)\subset G$ we have $g_nu_n\rightharpoonup0$ in $L^2(\R)$.
\end{definition}

We also define the `highest energy of sequences that are not precompact up to symmetry':

$$\cA^*_p:=\sup\left\{\limsup_{n\to\ii}\int_\R\left(\int_\R|\Psi_p[u_n]|^q\,dx\right)^{p/q}\,dt \,:\,\quad\norm{u_n}_{L^2}=1,\,\quad u_n\rightharpoonup_{\text{sym}}0\right\}.$$

Our main result is to identify exactly $\cA_p^*$.

\begin{theorem}\label{thm:MMM-2}
 Let $p>4$ and $q$ such that $2/p+1/q=1/2$. Then, we have
 $$
 \cA^*_p=a_p\cS_p \,,
 $$
 where $\cS_p$ is defined in \eqref{eq:best-schro} and $a_p$ is defined in \eqref{eq:def-aq}. Furthermore, the supremum defining $\cA^*_p$ is attained, that is, there is a sequence $(u_n)\subset L^2(\R)$ with $\norm{u_n}_{L^2}=1$ for all $n$, such that  $u_n\rightharpoonup_{\text{sym}}0$, and $\limsup_{n\to\ii}\norm{\Psi_p[u_n]}_{L^p_tL^q_x}^p=\cA_p^*$. 
\end{theorem}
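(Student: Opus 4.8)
The plan is to prove the two inequalities $\cA_p^* \le a_p \cS_p$ and $\cA_p^* \ge a_p \cS_p$ separately, with the second one coming together with an explicit extremizing sequence.

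\medskip

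\noindent\emph{Lower bound and the extremizing sequence.}
First I would construct a sequence realizing $a_p\cS_p$. The heuristic (from the discussion in the introduction, and formalized in the forthcoming Lemma \ref{lem:energy-two-bubbles}) is to place two ``bubbles'' concentrating at frequency $+N$ and $-N$ with $N\to\infty$, each carrying half the $L^2$-mass, and to rescale near those frequencies so that the cubic curve $\eta=\xi^3$ looks locally like a parabola (the second-order Taylor expansion $\xi^3 = (\pm N)^3 \pm 3N^2(\xi\mp N) + 3(\pm N)(\xi\mp N)^2 + \dots$, whose quadratic part is $\pm 3N(\xi\mp N)^2$, matching the Schrödinger symbol $3\xi^2$ up to a sign and the irrelevant linear/constant terms which are Galilean/translation symmetries). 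Concretely: take $\phi$ a (near-)optimizer of $\cS_p$, put $\hat u_n = $ (suitably normalized) $\frac{1}{\sqrt2}\big(e^{ix_n\cdot}\phi_n(\cdot - N_n) + e^{iy_n\cdot}\bar\phi_n(-(\cdot + N_n))\big)$ in frequency, where $\phi_n$ is $\phi$ rescaled to a small frequency window and $N_n\to\infty$ fast. One checks $\|u_n\|_{L^2}=1$, that $u_n \rightharpoonup_{\text{sym}} 0$ (the two pieces live at diverging frequencies, so no sequence of symmetries $g_n$ can hold on to a nonzero weak limit — here one uses that $G$ only contains translations, dilations, and the Airy flow, none of which can track two separating frequency scales), and that the weight $|\xi|^{1/p}$ is asymptotically constant ($\approx N_n^{1/p}$) on each window, absorbing into the normalization. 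The key computation is then that as the two bubbles separate, the cross terms in $|\Psi_p[u_n]|^q$ do not vanish: expanding $|\Psi_p[u_n]|^q = (|\Psi_p[u_n]|^2)^{q/2}$ and writing $\Psi_p[u_n] \approx N_n^{1/p}(e^{i\theta_n^+}v_n^+ + e^{i\theta_n^-}v_n^-)$ where $v_n^\pm$ are Schrödinger evolutions of the two bubbles, the modulus squared is $|v_n^+|^2 + |v_n^-|^2 + 2\re(e^{i(\theta_n^- - \theta_n^+)}\overline{v_n^+}v_n^-)$; because the two bubbles concentrate at the \emph{same} point in physical space (after the rescaling), $v_n^+$ and $v_n^-$ overlap, and averaging $(1+\cos\theta)^{q/2}$ over the relative phase (which equidistributes as the bubbles move) produces exactly the factor $\frac{1}{2\pi}\int_0^{2\pi}(1+\cos\theta)^{q/2}d\theta = a_p^{q/p}$. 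This yields $\limsup_n \|\Psi_p[u_n]\|_{L^p_tL^q_x}^p = a_p\cS_p$, proving both $\cA_p^* \ge a_p\cS_p$ and that the sup is attained. I would lean on Lemma \ref{lem:energy-two-bubbles} to package the messy asymptotics, and on Remark \ref{rk:concentration-one-point} for why a single bubble only gives $\cS_p < a_p\cS_p$.

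\medskip

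\noindent\emph{Upper bound.}
The inequality $\cA_p^* \le a_p\cS_p$ is the substantive half and is where the refined Airy--Strichartz inequality of Section \ref{sec:refined} and the approximate-operator analysis of Section \ref{sec:approximate-operators} enter. Take any $(u_n)$ with $\|u_n\|_{L^2}=1$ and $u_n\rightharpoonup_{\text{sym}}0$; I want $\limsup_n \|\Psi_p[u_n]\|_{L^p_tL^q_x}^p \le a_p\cS_p$. The refined inequality should say, roughly, that $\|\Psi_p[u]\|_{L^p_tL^q_x}$ is controlled by $\|u\|_{L^2}$ times a power of $\sup_N (\text{something like a Littlewood--Paley/frequency-localized piece of } u)$, so that energy can only be ``seen'' where the Fourier mass of $u_n$ concentrates at a definite frequency scale. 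Since $u_n \rightharpoonup_{\text{sym}} 0$ rules out convergence after any dilation/translation/flow, one runs a concentration-compactness / profile decomposition argument: the mass of $u_n$ must, after passing to a subsequence, split into at most countably many profiles, each of which (by the hypothesis $u_n\rightharpoonup_{\text{sym}}0$ combined with the structure of $G$) must either escape to frequency $\infty$ at a single point, or split between two diverging opposite frequencies. In the first case the rescaled profile becomes a Schrödinger solution (the cubic curve linearizes to the parabola), contributing at most $\cS_p$ per profile; in the second case it contributes at most $a_p\cS_p$ for the pair, via the two-bubble interaction computed above — and crucially $a_p\cS_p$ is the \emph{maximum} of these. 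A near-orthogonality / Brézis--Lieb argument (the generalized version in Appendix \ref{app:BL}, which accounts both for the mixed $L^p_tL^q_x$ norm and for the attractive two-bubble interaction) then shows $\limsup_n\|\Psi_p[u_n]\|^p_{L^p_tL^q_x} \le \sum_j (\text{energy of profile } j)$ with the $L^2$-masses $\alpha_j$ summing to $\le 1$; since each profile's energy is at most $a_p\cS_p\,\alpha_j^{p/2}$ and $p/2>1$ makes $\sum \alpha_j^{p/2}\le (\sum\alpha_j)^{p/2}\le 1$, we conclude $\le a_p\cS_p$.

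\medskip

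\noindent\textbf{Main obstacle.}
The hard part is the upper bound, specifically making the profile decomposition genuinely exhaustive: I must show that \emph{every} way $(u_n)$ can carry energy while satisfying $u_n\rightharpoonup_{\text{sym}}0$ is accounted for by either the one-bubble or the two-opposite-bubble scenario, with no missed ``intermediate'' configurations (e.g.\ three bubbles at frequencies $N, -N, 2N$, or bubbles at $N$ and $-N$ concentrating at \emph{different} physical points so the cross term vanishes — this last case only helps, giving $2\cdot\cS_p\cdot(1/2)^{p/2} = 2^{1-p/2}\cS_p < a_p\cS_p$). Controlling the interaction (Brézis--Lieb) errors uniformly across a possibly-infinite sum of profiles, and verifying the approximate symmetries in Section \ref{sec:approximate-operators} behave as claimed under the rescalings that straighten $\eta=\xi^3$ to $\eta=3\xi^2$ (and $\eta=-3\xi^2$ at the opposite point — the sign flip, irrelevant by time-reversal for $\cS_p$), is where the real work lies; I expect to invoke the refined inequality of Section \ref{sec:refined} to bound the ``remainder'' after extracting finitely many profiles and to invoke Section \ref{sec:approximate-operators} for the profile-by-profile energy identification.
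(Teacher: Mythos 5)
Your lower-bound half is essentially the paper's argument: the two-bubble construction at frequencies $\pm 1/\epsilon$ (Lemma \ref{lem:energy-two-bubbles}), the verification that such sequences satisfy $u_n\rightharpoonup_{\text{sym}}0$, and the identification of the cross-term average $\frac{1}{2\pi}\int_0^{2\pi}(1+\cos\theta)^{q/2}\,d\theta$ as the source of $a_p$. One small point: for the \emph{attainment} of the supremum defining $\cA_p^*$ the paper plugs in an actual maximizer of $\cS_p$ (which exists by \cite{Shao-09}); your "near-optimizer" version would need an explicit diagonal extraction to reach $a_p\cS_p$ exactly rather than only values arbitrarily close to it.

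The upper bound is where you diverge from the paper, and where your proposal has a genuine gap. You propose a full profile decomposition into countably many bubbles together with an energy-superposition inequality, and you correctly identify its exhaustiveness and the control of interactions across infinitely many profiles as the "main obstacle" --- but you do not resolve it, and this is precisely the step on which the argument of \cite{Shao-09b} founders (Remark \ref{rk:shao}: the $L^6_{t,x}$ norm does not split over profiles at opposite frequencies). The paper avoids the decomposition entirely. Given $(u_n)$ normalized with $u_n\rightharpoonup_{\text{sym}}0$ and energy at least $\frac12\cA_p^*$, it extracts a \emph{single} pair of profiles $(v_>,v_<)$ at frequencies $\pm\eta_n$ (splitting $u_n$ by frequency sign and applying Corollary \ref{coro:refined} to the dominant half), with the crucial \emph{uniform} lower bound $\norm{v_>}_{L^2}\ge\gamma>0$ depending only on $p$. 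The generalized Br\'ezis--Lieb lemma (Proposition \ref{bl}) plus the homogenization Lemma \ref{lem:homogenization-mixed} and the pointwise bound from \eqref{eq:cosine} (maximal at $a=1$, so no matching of $v_>$ and $v_<$ is needed) give the pair's contribution $\le a_p^{\alpha/p}\cS_p^{\alpha/p}(\norm{v_>}^2_{L^2}+\norm{v_<}^2_{L^2})^{\alpha/2}$, while the remainder $w_n$ is shown to again satisfy $w_n\rightharpoonup_{\text{sym}}0$ and is therefore bounded by $(\cA_p^*)^{\alpha/p}(1-\norm{v_>}^2_{L^2}-\norm{v_<}^2_{L^2})^{\alpha/2}$ \emph{by the definition of $\cA_p^*$ itself} --- no iteration, no second profile. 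The elementary inequality $1-(1-x)^{\alpha/2}\ge x^{\alpha/2}$ and the uniformity of $\gamma$ then force $\cA_p^*\le a_p\cS_p$ upon taking the supremum over such sequences. If you want to follow your route instead, you would have to actually construct the Airy profile decomposition with paired opposite-frequency profiles and prove the orthogonality of energies in $L^p_tL^q_x$ including the non-vanishing cross terms; as written, your proof is not complete at that step.
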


\begin{remark}
One can also define a quantity $\cA_{p,\R}^*$ analogously to $\cA_p^*$ by restricting to sequences of real-valued functions. Then the exaxt same statement as in Theorem \ref{thm:MMM-2} holds, replacing $\cA_p^*$ by $\cA_{p,\R}^*$. In fact, we clearly have $\cA_{p,\R}^*\ge \cA_p^*$ and, since the sequence from Lemma \ref{lem:energy-two-bubbles} is real-valued, the proof of Theorem \ref{thm:MMM-2} will actually show that $\cA_{p,\R}^*\le a_p \cS_p$. Therefore, the claim in the real-valued case follows from that in the complex-valued case.
\end{remark}

Before proving Theorem \ref{thm:MMM-2}, we explain how it implies Theorem \ref{thm:main} using the Method of the Missing Mass. 

\begin{proposition}\label{prop:MMM}
 Let $p>4$ and $q$ such that $2/p+1/q=1/2$. Then, all normalized maximizing sequences for $\cA_p$ are precompact up to symmetries if and only if
 $$\cA_p>\cA^*_p.$$
\end{proposition}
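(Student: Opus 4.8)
The plan is to establish the two implications separately, using Theorem \ref{thm:MMM-2} as the key input that computes $\cA_p^* = a_p \cS_p$.

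\emph{The easy direction} ($\cA_p \le \cA_p^*$ implies failure of precompactness up to symmetries). Suppose first that $\cA_p \le \cA_p^*$. By the second part of Theorem \ref{thm:MMM-2}, the supremum defining $\cA_p^*$ is attained: there is a normalized sequence $(u_n)$ with $u_n \rightharpoonup_{\mathrm{sym}} 0$ and $\limsup_n \norm{\Psi_p[u_n]}_{L^p_t L^q_x}^p = \cA_p^* \ge \cA_p$. On the other hand, $\norm{\Psi_p[u_n]}_{L^p_t L^q_x}^p \le \cA_p$ for every $n$ by definition of $\cA_p$, so $\limsup_n \norm{\Psi_p[u_n]}_{L^p_t L^q_x}^p \le \cA_p$, and hence equality holds: $(u_n)$ (or a subsequence) is a maximizing sequence. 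But a sequence with $u_n \rightharpoonup_{\mathrm{sym}} 0$ cannot be precompact up to symmetries, since any $L^2$-convergent subsequence of $(g_n u_n)$ would have to converge to $0$, contradicting $\norm{g_n u_n}_{L^2} = 1$. Thus not all maximizing sequences are precompact up to symmetries. (One should pass to a subsequence along which the $\limsup$ is a genuine limit to make "$(u_n)$ is a maximizing sequence" literally correct; this is routine.)

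\emph{The main direction} ($\cA_p > \cA_p^*$ implies precompactness up to symmetries). This is where the Method of the Missing Mass enters. Let $(u_n)$ be a normalized maximizing sequence. The strategy is: (1) produce a non-zero weak limit up to symmetries, then (2) upgrade weak to strong convergence. For step (1), argue by contradiction: if $u_n \rightharpoonup_{\mathrm{sym}} 0$, then by the definition of $\cA_p^*$ we would get $\cA_p = \limsup_n \norm{\Psi_p[u_n]}_{L^p_t L^q_x}^p \le \cA_p^* < \cA_p$, a contradiction. Hence there exist $(g_n) \subset G$ and a subsequence such that $g_n u_n \rightharpoonup v \ne 0$ weakly in $L^2$; replacing $u_n$ by $g_n u_n$ (legitimate since $\cA_p$ and precompactness up to symmetries are $G$-invariant), we may assume $u_n \rightharpoonup v \ne 0$. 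For step (2), write $u_n = v + r_n$ with $r_n \rightharpoonup 0$, so $\norm{u_n}_{L^2}^2 = \norm{v}_{L^2}^2 + \norm{r_n}_{L^2}^2 + o(1)$. The Br\'ezis--Lieb-type lemma (from Appendix \ref{app:BL}, referenced in the excerpt) should give a splitting of the nonlinear functional of the form $\norm{\Psi_p[u_n]}_{L^p_t L^q_x}^p = \norm{\Psi_p[v]}_{L^p_t L^q_x}^p + \norm{\Psi_p[r_n]}_{L^p_t L^q_x}^p + o(1)$. Combining these with the bounds $\norm{\Psi_p[v]}^p \le \cA_p \norm{v}_{L^2}^p$ and $\limsup_n \norm{\Psi_p[r_n]}^p \le \cA_p^* \norm{r_n}_{L^2}^p$ (the latter because $r_n \rightharpoonup 0$, hence $r_n \rightharpoonup_{\mathrm{sym}} 0$... — see the obstacle below), together with $\cA_p^* < \cA_p$, $p > 2$, and the superadditivity inequality $\cA_p(\mu + \nu) > \cA_p \mu + \cA_p^* \nu$ for $\mu, \nu > 0$ with $\mu + \nu = 1$, one concludes that $\norm{r_n}_{L^2} \to 0$, i.e. $u_n \to v$ strongly. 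A short separate check shows $\norm{v}_{L^2} = 1$ and $v$ is a maximizer, completing precompactness.

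\emph{The main obstacle.} The delicate point is the claim that $r_n \rightharpoonup 0$ in $L^2$ implies $\limsup_n \norm{\Psi_p[r_n]}_{L^p_t L^q_x}^p \le \cA_p^* \norm{r_n}_{L^2}^p$. A priori $\cA_p^*$ is defined only for sequences satisfying $r_n \rightharpoonup_{\mathrm{sym}} 0$, which is a stronger condition than $r_n \rightharpoonup 0$: one needs $g_n r_n \rightharpoonup 0$ for \emph{all} $(g_n) \subset G$. So one cannot directly apply the definition of $\cA_p^*$ to $(r_n)$. The resolution — and the real content the Method of the Missing Mass must supply — is presumably an iteration or profile-decomposition argument: if $(r_n)$ fails $r_n \rightharpoonup_{\mathrm{sym}} 0$, extract another profile, subtract it, and repeat; the almost-orthogonality of $L^2$-masses forces the process to terminate (or the masses to become negligible), and at each stage the strict inequality $\cA_p^* < \cA_p$ together with superadditivity prevents mass from leaking away. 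Making this bookkeeping precise — in particular handling the mixed $L^p_t L^q_x$ norm and the two-bubble interaction that Theorem \ref{thm:MMM-2} already accounts for in the value of $a_p$ — is the technical heart of the argument. Everything else (the $G$-invariance reductions, the Br\'ezis--Lieb splitting modulo the appendix, the elementary superadditivity of $t \mapsto t^{p/2}$) is routine.
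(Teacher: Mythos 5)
Your easy direction is correct and is exactly the paper's argument. The main direction has the right skeleton (use $\cA_p>\cA_p^*$ to extract a nonzero weak limit $v$ up to symmetries, then split via Br\'ezis--Lieb and conclude by superadditivity), but the step you single out as ``the main obstacle'' and leave unresolved is in fact a phantom, and your proposed resolution by iteration/profile decomposition is both unnecessary and not carried out --- so as written the proof is incomplete precisely at the point you defer.

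The resolution is much simpler than you anticipate: one does \emph{not} need the bound $\limsup_n\norm{\Psi_p[r_n]}_{L^p_tL^q_x}^p\le\cA_p^*\norm{r_n}_{L^2}^p$ for the remainder. The paper estimates \emph{both} pieces by the trivial bound coming from the definition of $\cA_p$, namely $\norm{\Psi_p[v]}^\alpha\le\cA_p^{\alpha/p}\norm{v}_{L^2}^\alpha$ and $\norm{\Psi_p[r_n]}^\alpha\le\cA_p^{\alpha/p}\norm{r_n}_{L^2}^\alpha$ with $\alpha=\min(p,q)$. Combined with the Br\'ezis--Lieb inequality and $1=\norm{v}_{L^2}^2+\norm{r_n}_{L^2}^2+o(1)$, this yields $1\le\norm{v}_{L^2}^\alpha+(1-\norm{v}_{L^2}^2)^{\alpha/2}$. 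Since $\alpha>2$, the map $x\mapsto x^{\alpha/2}$ is strictly superadditive, i.e.\ $a^{\alpha/2}+b^{\alpha/2}\le(a+b)^{\alpha/2}$ with equality iff $ab=0$; since $v\neq0$ this forces $\norm{r_n}_{L^2}\to0$. The quantity $\cA_p^*$ is used exactly once, to guarantee $v\neq 0$; after that the argument never refers to $\cA_p^*$ again, and no bookkeeping over further profiles is required. (Two minor points: the appendix's Br\'ezis--Lieb result is only a one-sided asymptotic \emph{inequality}, with exponent $\alpha=\min(p,q)$ rather than $p$, but the inequality goes in the direction the argument needs; and the almost-everywhere convergence $\Psi_p[r_n]\to0$ needed to apply it comes from the local smoothing Lemma \ref{lem:compactness-smoothing}, which you should cite rather than leave implicit.)
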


Clearly, precompactness up to symmetries of maximizing sequences for $\cA_p$ implies the existence of a maximizer for $\cA_p$.

\begin{proof}
 Since we clearly always have $\cA_p\ge\cA^*_p$, the `only if' part follows from the definition of $\cA_p^*$ and the fact that the supremum defining it is attained, as stated in Theorem \ref{thm:MMM-2}. Indeed, a sequence $(u_n)$ satisfying $\norm{u_n}_{L^2}=1$ and $u_n\rightharpoonup_{\text{sym}}0$ cannot be precompact up to symmetries. We thus prove the `if' part. Assume that $\cA_p>\cA_p^*$, and let us apply the Method of the Missing Mass.
 Let $(u_n)$ an optimizing sequence for $\cA_p$ with $\norm{u_n}_{L^2}=1$. Since $\cA_p>\cA^*_p$, we have $u_n\not\rightharpoonup_{\text{sym}}0$ and hence there exists $(g_n)\subset G$ such that $g_nu_n\rightharpoonup v\neq0$ in $L^2(\R)$, perhaps up to a subsequence (the sequence $(g_nu_n)$ is bounded in $L^2(\R)$ and thus admits weak limits up to a subsequence). Let us write $v_n:=g_nu_n$ and  $r_n:=v_n-v\rightharpoonup0$ in $L^2(\R)$. We have 
 \begin{equation}\label{eq:mmm0}
    1=\norm{u_n}_{L^2}^2=\norm{v_n}_{L^2}^2=\norm{v}_{L^2}^2+\norm{r_n}_{L^2}^2+o_{n\to\ii}(1).
 \end{equation}
 Since $r_n\rightharpoonup0$ in $L^2(\R)$, we deduce that $\Psi_p[r_n]\to0$ a.e. in $\R^2$ by Lemma \ref{lem:compactness-smoothing}. Hence, by a variant of the Br\'ezis--Lieb lemma \cite{Lieb-83b,BreLie-83} for mixed Lebesgue spaces that we prove in Proposition \ref{bl}, we have for $\alpha=\min(p,q)$
 \begin{align*}
  \cA_p^{\alpha/p} &= \norm{\Psi_p[u_n]}_{L^p_tL^q_x}^\alpha+o_{n\to\ii}(1)\\
  &= \norm{\Psi_p[v_n]}_{L^p_tL^q_x}^\alpha+o_{n\to\ii}(1)\\
  &\le \norm{\Psi_p[v]}_{L^p_tL^q_x}^\alpha+\norm{\Psi_p[r_n]}_{L^p_tL^q_x}^\alpha+o_{n\to\ii}(1)\\
  &\le \cA_p^{\alpha/p}\left(\norm{v}_{L^2}^\alpha+\norm{r_n}_{L^2}^\alpha\right)+o_{n\to\ii}(1).
 \end{align*}
Using \eqref{eq:mmm0} and passing to the limit $n\to\ii$, we find, because of $\cA_p>0$,
$$
1\le\norm{v}_{L^2}^\alpha+(1-\norm{v}_{L^2}^2)^{\alpha/2}.
$$
 Since $p,q>2$, we always have $\alpha=\min(p,q)>2$ and thus $a^{\alpha/2}+b^{\alpha/2}\le(a+b)^{\alpha/2}$ for all $a,b\ge0$ with equality if and only if $a=0$ or $b=0$. Hence, we find that either $\norm{v}_{L^2}^2=0$ or $1-\norm{v}_{L^2}^2=0$. Since $v\neq0$, this means that $0=1-\norm{v}_{L^2}^2=\lim_{n\to\ii}\norm{r_n}_{L^2}^2=\lim_{n\to\ii}\norm{v_n-v}_{L^2}^2$, implying that $(v_n)=(g_nu_n)$ converges strongly in $L^2(\R)$. 
\end{proof}

Theorem \ref{thm:main} follows clearly from the combination of Proposition \ref{prop:MMM} and Theorem \ref{thm:MMM-2}. It remains to prove Theorem \ref{thm:MMM-2}, that is, to relate $\cA^*_p$ to the best constant in the Strichartz inequality. To make the constant $\cA^*_p$ more explicit, we prove that an optimizing sequence for $\cA^*_p$ must concentrate at two (distinct) opposite frequencies, which is reminiscent of what happens for the Stein--Tomas inequality \cite{ChrSha-12a,Shao-15,FraLieSab-16}. We first compute the `energy' of such a sequence. 

\begin{lemma}\label{lem:energy-two-bubbles}
 Let $p>4$ and $q$ such that $2/p+1/q=1/2$. Let $\chi\in L^2(\R)$, $\chi\neq0$, and $\epsilon>0$. Define
 $$\forall\xi\in\R,\quad\hat{\chi_\epsilon}(\xi)=\hat{\chi}\left(\frac{\xi-1}{\epsilon}\right)+\bar{\hat{\chi}\left(-\frac{\xi+1}{\epsilon}\right)}.$$
 Then, we have
 $$
 \frac{\dps\int_\R\left(\int_\R|\Psi_p[\chi_\epsilon](t,x)|^qdx\right)^{p/q}\,dt}{\norm{\chi_\epsilon}_{L^2}^p}\xrightarrow[\epsilon\to0]{}a_p\frac{\dps\int_\R\left(\int_\R|(e^{-3it\partial_x^2}\chi)(x)|^qdx\right)^{p/q}\,dt}{\norm{\chi}_{L^2}^p},
$$
where $a_p$ is defined in \eqref{eq:def-aq}.
\end{lemma}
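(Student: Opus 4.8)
The plan is to exploit the fact that, after Taylor-expanding the symbol $\xi^3$ around the two frequencies $\xi=\pm1$, the linearised Airy flow turns into a Schr\"odinger flow, while the two bumps of $\chi_\epsilon$ — which sit at opposite frequencies $\pm1$ — produce oscillatory prefactors $e^{\pm i(t+x)}$. The rapid oscillation of these prefactors averages against the uniform measure on the circle, and this is precisely where $a_p=\bigl(\tfrac1{2\pi}\int_0^{2\pi}(1+\cos\theta)^{q/2}\,d\theta\bigr)^{p/q}$ enters.

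First I would split $\hat{\chi_\epsilon}=\hat{\chi_\epsilon^+}+\hat{\chi_\epsilon^-}$ into its two bumps. For small $\epsilon$ these live at scale $\epsilon$ around $\pm1$, so they are asymptotically $L^2$-orthogonal and $\norm{\chi_\epsilon}_{L^2}^2=2\epsilon\norm{\chi}_{L^2}^2(1+o(1))$ (the cross term being $o(\epsilon)$, and vanishing identically for small $\epsilon$ if $\hat\chi$ has compact support). Next I would introduce the rescaled variables $s:=\epsilon^2 t$ and $y:=\epsilon(3t+x)$, substitute $\xi=\pm1+\epsilon\zeta$ in the oscillatory integrals defining $\Psi_p[\chi_\epsilon^\pm]$, and use $(\pm1+\epsilon\zeta)^3=\pm1+3\epsilon\zeta\pm3\epsilon^2\zeta^2+\epsilon^3\zeta^3$ together with $t+x=y/\epsilon-2s/\epsilon^2$ to reach the exact identity
$$\Psi_p[\chi_\epsilon]\Bigl(\tfrac{s}{\epsilon^2},\tfrac{y}{\epsilon}-\tfrac{3s}{\epsilon^2}\Bigr)=2\epsilon\,\re\Bigl(e^{i(y/\epsilon-2s/\epsilon^2)}F_\epsilon(s,y)\Bigr),\quad F_\epsilon(s,y):=\tfrac1{\sqrt{2\pi}}\int_\R|1+\epsilon\zeta|^{1/p}e^{i(y\zeta+3s\zeta^2+\epsilon s\zeta^3)}\hat\chi(\zeta)\,d\zeta,$$
where the $\chi_\epsilon^-$-contribution comes out as the complex conjugate of the $\chi_\epsilon^+$-contribution. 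As $\epsilon\to0$ the factor $|1+\epsilon\zeta|^{1/p}e^{i\epsilon s\zeta^3}\to1$, so $F_\epsilon(s,y)\to(e^{-3is\partial_x^2}\chi)(y)=:w(s,y)$; for $\hat\chi\in C_c^\infty$ this holds pointwise and in $L^q_y$ for each $s$, and — via van der Corput's lemma, using $|\partial_\zeta^2(y\zeta+3s\zeta^2+\epsilon s\zeta^3)|=6|s|\,|1+\epsilon\zeta|\gtrsim|s|$ on $\supp\hat\chi$ — one gets the uniform-in-$\epsilon$ dispersive bound $\norm{F_\epsilon(s,\cdot)}_{L^q_y}\lesssim\langle s\rangle^{-(1/2-1/q)}$, which lies in $L^p(\R_s)$ since $p(\tfrac12-\tfrac1q)=2>1$ by the exponent constraint.

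Then I would square: $\bigl|\Psi_p[\chi_\epsilon](\tfrac{s}{\epsilon^2},\tfrac{y}{\epsilon}-\tfrac{3s}{\epsilon^2})\bigr|^2=2\epsilon^2|F_\epsilon(s,y)|^2\bigl(1+\cos(2y/\epsilon-4s/\epsilon^2+2\arg F_\epsilon(s,y))\bigr)$, raise to the power $q/2$, and integrate in $y$ at fixed $s$, invoking the elementary averaging lemma. Expanding $(1+\cos\theta)^{q/2}=\sum_k\widehat H(k)e^{ik\theta}$ — an absolutely convergent series since $q>2$ makes $(1+\cos\theta)^{q/2}$ of class $C^2$, with $\widehat H(0)=\tfrac1{2\pi}\int_0^{2\pi}(1+\cos\theta)^{q/2}\,d\theta$ — the $k=0$ term gives $\widehat H(0)\,\norm{F_\epsilon(s,\cdot)}_{L^q_y}^q\to\widehat H(0)\,\norm{w(s,\cdot)}_{L^q_y}^q$, while each $k\neq0$ term equals, up to a unit-modulus constant, $\widehat H(k)\int_\R(F_\epsilon)^k|F_\epsilon|^{q-k}e^{2iky/\epsilon}\,dy\to0$ by Riemann--Lebesgue (the amplitude converging in $L^1_y$). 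Changing variables $dx=\epsilon^{-1}dy$, then $dt=\epsilon^{-2}ds$, integrating in $s$ by dominated convergence with the dominant $\langle s\rangle^{-2}$, and using $p(q-1)/q-2=p/2$, I would arrive at
$$\int_\R\Bigl(\int_\R|\Psi_p[\chi_\epsilon](t,x)|^q\,dx\Bigr)^{p/q}\,dt=\epsilon^{p/2}\,2^{p/2}\,\widehat H(0)^{p/q}\int_\R\Bigl(\int_\R|(e^{-3it\partial_x^2}\chi)(x)|^q\,dx\Bigr)^{p/q}\,dt\,(1+o(1)).$$
Dividing by $\norm{\chi_\epsilon}_{L^2}^p=(2\epsilon)^{p/2}\norm{\chi}_{L^2}^p(1+o(1))$, the powers of $\epsilon$ and the two factors $2^{p/2}$ cancel, $\widehat H(0)^{p/q}=a_p$, and the claim follows for $\hat\chi\in C_c^\infty$; the general case $\chi\in L^2$ follows by approximating $\hat\chi$ in $L^2$ by $C_c^\infty$ functions, the errors being controlled uniformly in $\epsilon$ through the Airy--Strichartz bound $\norm{\Psi_p[\,\cdot\,]}_{L^p_tL^q_x}\le\cA_p^{1/p}\norm{\cdot}_{L^2}$ and the Strichartz bound for $e^{-3it\partial_x^2}$.

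The step I expect to be the main obstacle is the uniform-in-$\epsilon$ control of the rescaled profiles $F_\epsilon$: producing a single dominating function, integrable in $s$ near both $0$ and $\infty$, that simultaneously absorbs the amplitude distortion $|1+\epsilon\zeta|^{1/p}$ and the parasitic cubic phase $e^{i\epsilon s\zeta^3}$ — this is exactly where the van der Corput input is needed, so that one may commute the $\epsilon\to0$ limit with the outer $L^p_t$-integration. A secondary point requiring care is that the averaging lemma must tolerate both the $y$-dependent phase $\arg F_\epsilon(s,y)$ and the very large constant phase $-2s/\epsilon^2$; both are harmless once the argument is routed through the absolutely convergent Fourier series of $(1+\cos\theta)^{q/2}$ and Riemann--Lebesgue, and for $\hat\chi\in C_c^\infty$ all $\zeta$-integrals run over a fixed compact set, which renders these estimates routine.
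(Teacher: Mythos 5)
Your proof is correct, and while the skeleton matches the paper's (the substitution $\xi=\pm1+\epsilon\zeta$, the change of variables $s=\epsilon^2 t$, $y=\epsilon(3t+x)$, the identification of $F_\epsilon$ with the approximate operator $T_{p,\epsilon}$, and the emergence of $a_p$ from averaging $(1+\cos\theta)^{q/2}$), you execute the two technical steps differently. For the convergence $F_\epsilon\to e^{-3is\partial_x^2}\chi$ the paper invokes its Lemma \ref{lem:approximate-operator}, which asserts strong convergence of $T_{p,\delta}$ in $L^p_tL^q_x$ and is itself proved by exactly the stationary-phase/dispersive bound you describe, so here you have merely inlined the same input. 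The genuine divergence is in the averaging step: the paper disposes of the oscillation $e^{i(x/\epsilon-2t/\epsilon^2)}$ by a two-scale homogenization lemma in mixed Lebesgue spaces (Lemma \ref{lem:homogenization-mixed}, an Allaire-type result proved in the appendix), whereas you observe that at fixed $s$ the phase $-2s/\epsilon^2$ is constant in $y$, so only a one-scale average in $y$ is needed, which you handle by the absolutely convergent Fourier series of $(1+\cos\theta)^{q/2}$ (using $q>2$ for summability) plus Riemann--Lebesgue, followed by dominated convergence in $s$ with the dominant $\langle s\rangle^{-2}$. This is more elementary and avoids the appendix lemma entirely for this computation; the paper's route, on the other hand, packages the averaging in a form that is reused verbatim for the term $A_{1,n}$ in the proof of Theorem \ref{thm:MMM-2}, where the same trick of freezing $s$ would also work. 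Two minor points: the amplitude in your $k\neq0$ terms should read $F_\epsilon^{2k}|F_\epsilon|^{q-2k}$ rather than $(F_\epsilon)^k|F_\epsilon|^{q-k}$, since the phase carries $2\arg F_\epsilon$ (this is cosmetic, as the modulus is still $|F_\epsilon|^q$ and the $L^1_y$-convergence argument is unaffected); and your $\norm{\chi_\epsilon}_{L^2}^2=2\epsilon\norm{\chi}_{L^2}^2(1+o(1))$ with cross term $o(\epsilon)$ is actually stated more carefully than the paper's $2\epsilon\norm{\chi}_{L^2}^2+o(1)$, which is what the final division requires.
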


\begin{remark}\label{rk:concentration-one-point}
 If the sequence concentrates at one frequency rather than two, by taking for instance $\hat{\chi_\epsilon}(\xi)=\hat{\chi}((\xi-1)/\epsilon)$, one has
 $$\frac{\dps\int_\R\left(\int_\R|\Psi_p[\chi_\epsilon](t,x)|^qdx\right)^{p/q}\,dt}{\norm{\chi_\epsilon}_{L^2}^p}\xrightarrow[\epsilon\to0]{}\frac{\dps\int_\R\left(\int_\R|(e^{-3it\partial_x^2}\chi)(x)|^qdx\right)^{p/q}\,dt}{\norm{\chi}_{L^2}^p}.$$
 In particular, since $a_p>1$ it is `energetically' more favorable to concentrate at two frequencies rather than one. Similarly, our analysis will show that concentrating at more than two points is `energetically' unfavorable.
\end{remark}

\begin{remark}
 It is important to notice that the transformation $\hat{\chi}\to\hat{\chi}((\cdot-1)/\epsilon)$ is not a symmetry of the Airy--Strichartz inequality, which means that translations in frequency (Galilean boosts) are not symmetries of the Airy--Strichartz inequality. By taking the limit of such a translation to infinity, one finds an effective `energy', the Strichartz energy for the standard Schr\"odinger evolution, which is now invariant by Galilean boosts. They are thus `approximate symmetries' of the Airy--Strichartz inequality.
\end{remark}

\begin{remark}\label{rk:shao}
A computation similar to the one in Lemma \ref{lem:energy-two-bubbles} appears in \cite[Lem. 6.1]{Shao-09b}, but with a different result. We believe that the problem in \cite[Lem. 6.1]{Shao-09b} is the passage from Eq. (89) to Eq. (90). For the same reason there is a problem in \cite[Lem. 5.1]{Shao-09b}, because the $L^6_{t,x}$-norm does not split, for instance, when the sequence concentrates at two opposite frequencies. This does not affect the validity of \cite[Thm. 1.5]{Shao-09b}, but it does affect the validity of \cite[Thm. 1.9]{Shao-09b}, since \cite[Lem. 5.1]{Shao-09b} is used in its proof. Let us also notice that in the real-valued version of the profile decomposition of \cite{Shao-09b}, two opposite frequencies necessarily carry the same profile. 
\end{remark}

\begin{proof}[Proof of Lemma \ref{lem:energy-two-bubbles}]
 First, we clearly have 
 $$\norm{\chi_\epsilon}_{L^2}^2=2\epsilon\norm{\chi}_{L^2}^2+o_{\epsilon\to0}(1).$$
 Next, we have
 \begin{align*}
  \Psi_p[\chi_\epsilon](t,x) &= 2\text{Re}\,\frac{\epsilon}{\sqrt{2\pi}}\int_\R|1+\epsilon\xi|^{1/q}e^{it(1+\epsilon\xi)^3+ix(1+\epsilon\xi)}\hat{\chi}(\xi)\,d\xi\\
  &= 2\text{Re}\,\frac{\epsilon}{\sqrt{2\pi}}e^{i(x+t)}\int_\R|1+\epsilon\xi|^{1/q}e^{it(3\epsilon^2\xi^2+\epsilon^3\xi^3)+i\epsilon\xi(x+3t)}\hat{\chi}(\xi)\,d\xi.
 \end{align*}
Changing variables $x\to x-3t$ and then $(x,t)\to(x/\epsilon,t/\epsilon^2)$, we find
\begin{align*}
  \norm{\Psi_p[\chi_\epsilon]}_{L^p_tL^q_x} &= \epsilon^{\frac12}\norm{2\text{Re}\,\frac{1}{\sqrt{2\pi}}e^{i\left(\frac{x}{\epsilon}-\frac{2t}{\epsilon^2}\right)}\int_\R|1+\epsilon\xi|^{1/q}e^{it(3\xi^2+\epsilon\xi^3)+ix\xi}\hat{\chi}(\xi)d\xi}_{L^p_tL^q_x}\\
  &=\epsilon^{\frac12}\norm{2\text{Re}\,e^{i\left(\frac{x}{\epsilon}-\frac{2t}{\epsilon^2}\right)}(T_{q,\epsilon}\chi)(t,x)}_{L^p_tL^q_x},
\end{align*}
where the operator $T_{q,\epsilon}$ is defined in Section \ref{sec:approximate-operators}. In particular, using Lemma \ref{lem:approximate-operator}, we deduce that
$$\norm{\Psi_p[\chi_\epsilon]}_{L^p_tL^q_x}=\epsilon^{\frac12}\norm{2\text{Re}\,e^{i\left(\frac{x}{\epsilon}-\frac{2t}{\epsilon^2}\right)}(e^{-3it\partial_x^2}\chi)(x)}_{L^p_tL^q_x}+o_{\epsilon\to0}(\epsilon^{\frac 12}).$$
For a.e. $(t,x)\in\R^2$, the function $\theta\in(\R/(2\pi\Z))^2\mapsto|2\text{Re}\,e^{i\left(\theta_1-2\theta_2\right)}(e^{-3it\partial_x^2}\chi)(x)|^q$ is continuous, and its maximum, $2^q|(e^{-3it\partial_x^2}\chi)(x)|^q$, belongs to $L^{p/q}_tL^1_x(\R\times\R)$. Using Lemma \ref{lem:homogenization-mixed}, we deduce that
\begin{align*}
&  \lim_{\epsilon\to0}\int_\R\left(\int_\R|2\text{Re}\,e^{i\left(\frac{x}{\epsilon}-\frac{2t}{\epsilon^2}\right)}(e^{-3it\partial_x^2}\chi)(x)|^q\,dx\right)^{p/q}\,dt\\
  &\qquad =\frac{1}{2\pi}\int_0^{2\pi}\int_\R\left(\frac{1}{2\pi}\int_0^{2\pi}\int_\R|2\text{Re}\,e^{i\left(\theta_1-2\theta_2\right)}(e^{-3it\partial_x^2}\chi)(x)|^q dx\,d\theta_1\right)^{p/q}\,dt\,d\theta_2\\
   &\qquad=\int_\R\left(\frac{1}{2\pi}\int_0^{2\pi}\int_\R|2\text{Re}\,e^{i\theta}(e^{-3it\partial_x^2}\chi)(x)|^q dx\,d\theta\right)^{p/q}\,dt.
\end{align*}
As a consequence, we have
\begin{align*}
 \norm{\Psi_p[\chi_\epsilon]}_{L^p_tL^q_x}^p 
 &= \epsilon^{p/2}\int_\R\left(\frac{1}{2\pi}\int_0^{2\pi}\int_\R|2\text{Re}\,e^{i\theta}(e^{-3it\partial_x^2}\chi)(x)|^qdx\,d\theta\right)^{p/q}\,dt+o_{\epsilon\to0}(\epsilon^{p/2})\\
 &= (2\epsilon)^{p/2}\left(\frac{1}{2\pi}\int_0^{2\pi}(1+\cos\theta)^{q/2}\,d\theta\right)^{p/q}\int_\R\left(\int_\R|(e^{-3it\partial_x^2}\chi)(x)|^qdx\right)^{p/q}\,dt \\
 & \qquad +o_{\epsilon\to0}(\epsilon^{p/2}).
\end{align*}
In the second equality we used the fact that for any $z\in\C$ we have
 \begin{equation}\label{eq:cosine0}
    \frac{1}{2\pi}\int_0^{2\pi}|e^{i\theta}z+e^{-i\theta}\overline z|^q\,d\theta = 2^{q/2} |z|^q \int_0^{2\pi}(1+\cos\theta)^{q/2}\,\frac{d\theta}{2\pi} \,.
 \end{equation}
This implies the result.
\end{proof}

We now turn to the proof of Theorem \ref{thm:MMM-2}, which interestingly uses a more involved version of the Method of the Missing Mass.

\begin{proof}[Proof of Theorem \ref{thm:MMM-2}]
 Let us first show that $\cA^*_p\ge a_p\cS_p$. For a sequence $(\epsilon_n)\subset(0,\ii)$ converging to $0$ and $\chi\in L^2(\R)$ with $\chi\neq 0$ we define $\chi_{\epsilon_n}$ as in Lemma \ref{lem:energy-two-bubbles} and set $u_n:= \chi_{\epsilon_n}/\norm{\chi_{\epsilon_n}}_{L^2}$.
 
Let us show that $u_n\rightharpoonup_{\text{sym}}0$. Hence, let $(g_n)=(g_{t_n,x_n,\lambda_n})\subset G$, and let us show that $g_nu_n\rightharpoonup0$ in $L^2(\R)$, which is equivalent to showing that any subsequence of $(g_nu_n)$ has a sub-subsequence converging weakly to zero in $L^2(\R)$. We show it for each one of the two bubbles composing $u_n$, which amounts to showing that 
 $$\xi\mapsto(\lambda_n\epsilon_n)^{-1/2}\hat{\chi}\left(\frac{\xi-\lambda_n}{\lambda_n\epsilon_n}\right)e^{i\frac{x_n}{\lambda_n}\xi+i\frac{t_n}{\lambda_n^3}\xi^3}$$
 has a subsequence converging weakly to zero as $n\to\ii$. Up to a subsequence, we have $\lambda_n\epsilon_n\to c$ as $n\to\ii$, with $c=0$, $0<c<\ii$, or $c=\ii$. In the cases $c=0$ or $c=\ii$, it is clear that it converges weakly to zero. If $0<c<\ii$, we must have $\lambda_n\to\ii$ and we thus also have weak convergence to zero.
 
According to Lemma \ref{lem:energy-two-bubbles} we have
$$
 \lim_{n\to\infty} \int_\R\left(\int_\R|\Psi_p[u_n](t,x)|^qdx\right)^{p/q}\,dt \xrightarrow[n\to\infty]{}a_p\frac{\dps\int_\R\left(\int_\R|(e^{-3it\partial_x^2}\chi)(x)|^qdx\right)^{p/q}\,dt}{\norm{\chi}_{L^2}^p}
$$
and therefore
$$
\cA^*_p \geq a_p\frac{\dps\int_\R\left(\int_\R|(e^{-3it\partial_x^2}\chi)(x)|^qdx\right)^{p/q}\,dt}{\norm{\chi}_{L^2}^p} \,.
$$ 
By taking the supremum over $\chi\in L^2(\R)$, $\chi\neq 0$, we conclude that $\cA^*_p\ge a_p\cS_p$.
 
Moreover, from \cite{Shao-09} we know that there is a maximizer for the Strichartz problem $\cS_p$, and taking $\chi$ in the above argument to be this maximizer we obtain a sequence $(u_n)$ with $\|u_n\|_{L^2}=1$ and $u_n \rightharpoonup_{\text{sym}}0$ such that $\lim_{n\to\infty} \| \Psi_p[u_n] \|_{L^p_t L^q_x}^p \to a_p\cS_p$. Thus, once we have shown the reverse inequality $\cA^*_p\le a_p\cS_p$, we have also proved the last statement in Theorem~\ref{thm:MMM-2}.

Thus, it remains to show the inequality $\cA^*_p\le a_p\cS_p$. Let $(u_n)\subset L^2(\R)$ be a sequence such that $\norm{u_n}_{L^2}=1$ and $u_n\rightharpoonup_{\text{sym}}0$, satisfying
$$
\limsup_{n\to\ii}\int_\R\left(\int_\R|\Psi_p[u_n]|^qdx\right)^{p/q}dt\ge\frac12\cA_p^*.
$$
We decompose $u_n$ as
$$
u_n=u_{n,>}+u_{n,<}
$$
with $\hat{u_{n,>}}=\1_{\R_+}\hat{u_n}$ and $\hat{u_{n,<}}=\1_{\R_-}\hat{u_n}$. Since $\Psi_p[u_n]=\Psi_p[u_{n,>}]+\Psi_p[u_{n,<}]$, we have
$$
\left(\int_\R|\Psi_p[u_n]|^qdx\right)^{p/q} \le 2^{p-1} \left( \left(\int_\R|\Psi_p[u_{n,>}]|^qdx\right)^{p/q} + \left(\int_\R|\Psi_p[u_{n,<}]|^qdx\right)^{p/q} \right).
$$
Integrating this inequality with respect to $t$ and estimating the right side, we find
$$
\norm{\Psi_p[u_{n,>}]}_{L^p_tL^q_x}^p 
\le 2^p \max\left\{ \norm{\Psi_p[u_{n,>}]}_{L^p_tL^q_x}^p,\,\norm{\Psi_p[u_{n,>}]}_{L^p_tL^q_x}^p \right\} \,.
$$
Passing to a subsequence and replacing $\hat{u_n}$ to $\hat{u_n}(-\cdot)$ if necessary (which is still an admissible sequence for $\cA^*_p$), we may assume that the maximum is always attained at $\norm{\Psi_p[u_{n,>}]}_{L^p_tL^q_x}^p$. Thus,
$$
\limsup_{n\to\ii}\norm{\Psi_p[u_{n,>}]}_{L^p_tL^q_x}^p\ge\frac{1}{2^{p+1}}\cA^*_p
$$
and, in particular, $\Psi_p[u_{n,>}]\nrightarrow0$ in $L^p_tL^q_x$.

By Corollary \ref{coro:refined}, there are $(g_n)\subset G$ and $(\eta_n)\subset\R_+$ with $\eta_n\ge1/2$ such that the sequence $((\hat{g_nu_{n,>}})(\cdot+\eta_n))$ has a weak limit $\hat{v_>}\neq0$ in $L^2(\R)$, with a lower bound 
 $$\norm{v_>}_{L^2}\ge \gamma>0,$$
 where $\gamma$ only depends on $p$. We have $\eta_n\to\ii$, for otherwise $\eta_n\to c\in\R$ up to a subsequence, and then $(g_nu_{n,>})$ has a non-zero weak limit, which then implies that $(g_nu_n)$ has a non-zero weak limit point, which contradicts $u_n\rightharpoonup_\text{sym}0$.
 
 Since $\supp\hat{g_nu_{n,>}}(\cdot+\eta_n)\subset[-\eta_n,+\ii)$, we may write
 \begin{align*}
    \hat{g_nu_{n,>}}(\eta+\eta_n) &=: \hat{v_>}(\eta)+\hat{r_{n,>}}(\eta)\\
    &= \1_{\eta+\eta_n\ge0}\hat{v_>}(\eta)+\1_{\eta+\eta_n\ge0}\hat{r_{n,>}}(\eta)\\
    &= \hat{v_>}(\eta)-\1_{\eta+\eta_n<0}\hat{v_>}(\eta)+\1_{\eta+\eta_n\ge0}\hat{r_{n,>}}(\eta)\\
    &=:\hat{v_>}(\eta)+\hat{r_{n,>}^{(2)}}(\eta)+\hat{r_{n,>}^{(1)}}(\eta),
 \end{align*}
 with $\hat{r_{n,>}^{(1)}}\rightharpoonup0$ in $L^2(\R)$, $\supp\hat{r_{n,>}^{(1)}}\subset[-\eta_n,+\ii)$ and $\hat{r_{n,>}^{(2)}}\to0$ strongly in $L^2(\R)$. Furthermore, the sequence $(\hat{g_nu_{n,<}}(-\cdot-\eta_n))$ is bounded in $L^2(\R)$, and hence admits a weak limit $\hat{v_<}$ (which may be zero) in $L^2(\R)$, up to a subsequence. We split accordingly
 $$\hat{g_nu_{n,<}}(-\cdot-\eta_n)=\hat{v_<}+\hat{r_{n,<}^{(2)}}+\hat{r_{n,<}^{(1)}}$$
 with $\hat{r_{n,<}^{(1)}}\rightharpoonup0$ in $L^2(\R)$, $\supp\hat{r_{n,<}^{(1)}}\subset[-\eta_n,+\ii)$ and $\hat{r_{n,<}^{(2)}}\to0$ strongly in $L^2(\R)$. Defining $\delta_n:=1/\eta_n\to0$, we now have
 \begin{align*}
  \norm{\Psi_p[u_n]}_{L^p_tL^q_x} &= \norm{\Psi_p[g_nu_{n,>}]+\Psi_p[g_nu_{n,<}]}_{L^p_tL^q_x}\\
  &= \norm{e^{ix/\delta_n-2it/\delta_n^2}T_{p,\delta_n}(v_>+r_{n,>}^{(1)}+r_{n,>}^{(2)}) \!+\! e^{-ix/\delta_n+2it/\delta_n^2}\bar{T_{p,\delta_n}(v_<+r_{n,<}^{(1)}+r_{n,<}^{(2)})}}_{L^p_tL^q_x}
 \end{align*}
with an operator $T_{p,\delta}$ defined in Section \ref{sec:approximate-operators}. By Lemma \ref{lem:local-convergence}, we deduce that
 $$
 T_{p,\delta_n}r_{n,>}^{(1)} \to 0
 \qquad\text{and}\qquad 
 T_{p,\delta_n}r_{n,<}^{(1)} \to 0
 $$
 almost everywhere in $(t,x)\in\R\times\R$. By Lemma \ref{lem:approximate-operator}, we also deduce that 
 $$
 T_{p,\delta_n}(v_>+r_{n,>}^{(2)}) \to e^{-3it\partial_x^2}v_>
 \qquad\text{and}\qquad 
T_{p,\delta_n}(v_<+r_{n,<}^{(2)}) \to e^{-3it\partial_x^2}v_<
 $$
 strongly in $L^p_tL^q_x$. Using Proposition \ref{bl} applied with
 $$
 \begin{cases}
    \Pi_n:=e^{ix/\delta_n-2it/\delta_n^2}e^{-3it\partial_x^2}v_>+e^{-ix/\delta_n+2it/\delta_n^2}\bar{e^{-3it\partial_x^2}v_<},\\
    \rho_n:=e^{ix/\delta_n-2it/\delta_n^2}T_{\delta_n}r_{n,>}^{(1)}+e^{-ix/\delta_n+2it/\delta_n^2}\bar{T_{\delta_n}r_{n,<}^{(1)}},
 \end{cases}
 $$
 we deduce that 
 \begin{equation}\label{eq:decomp-MMM-concentration}
  \norm{\Psi_p[u_n]}_{L^p_tL^q_x}^\alpha \le A_{1,n}+A_{2,n}+o_{n\to\ii}(1)
 \end{equation}
 with $\alpha=\min(p,q)$ and
 $$\begin{cases}
    A_{1,n}:=\norm{e^{ix/\delta_n-2it/\delta_n^2}e^{-3it\partial_x^2}v_>+e^{-ix/\delta_n+2it/\delta_n^2}\bar{e^{-3it\partial_x^2}v_<}}_{L^p_tL^q_x}^\alpha,\\
    A_{2,n}=\norm{e^{ix/\delta_n-2it/\delta_n^2}T_{\delta_n}r_{n,>}^{(1)}+e^{-ix/\delta_n+2it/\delta_n^2}\bar{T_{\delta_n}r_{n,<}^{(1)}}}_{L^p_tL^q_x}^\alpha \,.
 \end{cases}$$
 
 Using Lemma \ref{lem:homogenization-mixed} in the same way as in the proof of Lemma \ref{lem:energy-two-bubbles}, we find that 
 $$
    \lim_{n\to\ii}A_{1,n}=\left(\int_\R\left(\frac{1}{2\pi}\int_0^{2\pi}\int_\R\left|e^{i\theta}(e^{-3it\partial_x^2}v_>)(x)+e^{-i\theta}\bar{(e^{-3it\partial_x^2}v_<)(x)}\right|^q\,dx\,d\theta\right)^{p/q}\,dt\right)^{\alpha/p}.
 $$
 For any $z_1,z_2\in\C$ we have
 \begin{equation}\label{eq:cosine}
    \frac{1}{2\pi}\int_0^{2\pi}|e^{i\theta}z_1+e^{-i\theta}z_2|^q\,d\theta = (|z_1|^2+|z_2|^2)^{q/2}\int_0^{2\pi}(1+a\cos\theta)^{q/2}\,\frac{d\theta}{2\pi}
 \end{equation}
  with $a=2|z_1||z_2|/(|z_1|^2+|z_2|^2)\in[0,1]$. (Note that this is the generalization of \eqref{eq:cosine0} to $z_1\neq\overline{z_2}$.) As in the proof of Lemma 6.1 in \cite{FraLieSab-16}, this last function can be shown to be maximal at $a=1$. As a consequence,
 $$\frac{1}{2\pi}\int_0^{2\pi}|e^{i\theta}z_1+e^{-i\theta}z_2|^q\,d\theta\le(|z_1|^2+|z_2|^2)^{q/2}\int_0^{2\pi}(1+\cos\theta)^{q/2}\,\frac{d\theta}{2\pi}=a_p^{q/p}(|z_1|^2+|z_2|^2)^{q/2},$$
 and thus, by the triangle inequality in $L_t^{p/2}L_x^{q/2}$ (noting that $p,q\geq 2$),
 \begin{align*}
    \lim_{n\to\ii}A_{1,n} &\le a_p^{\alpha/p}\left(\int_\R\left(\int_\R\left(|e^{-3it\partial_x^2}v_>|^2+|e^{-3it\partial_x^2}v_<|^2\right)^{q/2}\,dx\right)^{p/q}\,dt\right)^{\alpha/p}\\
    &\le a_p^{\alpha/p}\left(\norm{e^{-3it\partial_x^2}v_>}_{L^p_tL^q_x}^2+\norm{e^{-3it\partial_x^2}v_<}_{L^p_tL^q_x}^2\right)^{\alpha/2}\\
    &\le a_p^{\alpha/p}\cS_p^{\alpha/p}\left(\norm{v_>}_{L^2}^2+\norm{v_<}_{L^2}^2\right)^{\alpha/2}.
 \end{align*}
 
 Concerning the term $A_{2,n}$, reversing the change of variables in $(t,x,\eta)$ that we have done, we find that 
 $$A_{2,n}=\norm{\Psi_p[w_n]}_{L^p_tL^q_x}^\alpha,$$
 with 
 \begin{align*}
  \hat{w_n}(\eta) &:=\hat{r_{n,>}^{(1)}}(\eta-\eta_n)+\hat{r_{n,<}^{(1)}}(-\eta-\eta_n)\\
  &=\hat{g_nu_n}(\eta)-\hat{v_>}(\eta-\eta_n)-\hat{v_<}(-\eta-\eta_n)-\hat{r_{n,>}^{(2)}}(\eta-\eta_n)-\hat{r_{n,<}^{(2)}}(-\eta-\eta_n).
 \end{align*}
 
 Let us show that $w_n\rightharpoonup_\text{sym}0$. Since $u_n\rightharpoonup_\text{sym}0$, we also have $g_nu_n\rightharpoonup_\text{sym}0$. As we have seen in the beginning of the proof, translating a fixed profile to infinity in frequencies gives a sequences that vanishes weakly up to any symmetry. Hence, the terms involving $v_>$ and $v_<$ also $\rightharpoonup_\text{sym}0$. Finally, the terms involving $r_{n,>}^{(2)}$ and $r_{n,<}^{(2)}$ converge strongly to zero in $L^2$, and thus also $\rightharpoonup_\text{sym}0$. Hence, we have proved that $w_n\rightharpoonup_\text{sym}0$.
 
 Since 
 $$\supp\hat{r_{n,>}^{(1)}}(\cdot-\eta_n)\subset\R_+,\qquad\supp\hat{r_{n,<}^{(1)}}(-\cdot-\eta_n)\subset\R_-,$$
 we deduce that
 $$\norm{w_n}_{L^2}^2=\norm{r_{n,>}^{(1)}}_{L^2}^2+\norm{r_{n,<}^{(1)}}_{L^2}^2.$$
 By the definitions of $r_{n,>}^{(1)}$ and $r_{n,<}^{(1)}$ and their weak convergence to zero, we also have
 $$\norm{r_{n,>}^{(1)}}_{L^2}^2=\norm{u_{n,>}}_{L^2}^2-\norm{v_>}_{L^2}^2+o(1),\quad \norm{r_{n,<}^{(1)}}_{L^2}^2=\norm{u_{n,<}}_{L^2}^2-\norm{v_<}_{L^2}^2+o(1),$$
 which implies that
 $$\norm{w_n}_{L^2}^2=\norm{u_n}_{L^2}^2-\norm{v_>}_{L^2}^2-\norm{v_<}_{L^2}^2+o(1)=1-\norm{v_>}_{L^2}^2-\norm{v_<}_{L^2}^2+o(1).$$
 If $\norm{v_>}_{L^2}^2+\norm{v_<}_{L^2}^2<1$, then $w_n/\norm{w_n}_{L^2}\rightharpoonup_\text{sym}0$ and by definition of $\cA^*_p$ we find that 
 $$\limsup_{n\to\ii}\norm{\Psi_p[w_n]}_{L^p_tL^q_x}^p\le\cA^*_p(1-\norm{v_>}_{L^2}^2-\norm{v_<}_{L^2}^2)^{p/2}.$$
 If $\norm{v_>}_{L^2}^2+\norm{v_<}_{L^2}^2=1$, then $w_n\to0$ strongly in $L^2$, so that the same inequality holds by the Airy--Strichartz inequality.
 
 We now insert our asymptotic estimates on $A_{1,n}$ and $A_{2,n}$ into \eqref{eq:decomp-MMM-concentration} and take the limit $n\to\ii$. This yields the inequality
 $$\limsup_{n\to\ii}\norm{\Psi_p[u_n]}_{L^p_tL^q_x}^\alpha\le a_p^{\alpha/p}\cS_p^{\alpha/p}\left(\norm{v_>}_{L^2}^2+\norm{v_<}_{L^2}^2\right)^{\alpha/2}+(\cA^*_p)^{\alpha/p}(1-\norm{v_>}_{L^2}^2-\norm{v_<}_{L^2}^2)^{\alpha/2},$$
 which we may rewrite as 
 \begin{multline*}
  \left(1-(1-\norm{v_>}_{L^2}^2-\norm{v_<}_{L^2}^2)^{\alpha/2}\right)(\cA^*_p)^{\alpha/p}-a_p^{\alpha/p}\cS_p^{\alpha/p}\left(\norm{v_>}_{L^2}^2+\norm{v_<}_{L^2}^2\right)^{\alpha/2}\\
  \le(\cA^*_p)^{\alpha/p}-\limsup_{n\to\ii}\norm{\Psi_p[u_n]}_{L^p_tL^q_x}^\alpha
 \end{multline*}
 By the elementary estimate $1-(1-x)^{\alpha/2}\ge x^{\alpha/2}$ valid for all $x\in[0,1]$ since $\alpha>2$, this implies that
 \begin{align*}
  (\cA^*_p)^{\alpha/p}-\limsup_{n\to\ii}\norm{\Psi_p[u_n]}_{L^p_tL^q_x}^\alpha &\ge \left(\norm{v_>}_{L^2}^2+\norm{v_<}_{L^2}^2\right)^{\alpha/2}((\cA^*_p)^{\alpha/p}-a_p^{\alpha/p}\cS_p^{\alpha/p})\\
  &\ge \gamma^\alpha((\cA^*_p)^{\alpha/p}-a_p^{\alpha/p}\cS_p^{\alpha/p}).
 \end{align*}
Taking the supremum over all such sequences $(u_n)$, the left side vanishes, indeed showing that $\cA^*_p\le a_p\cS_p$.    
\end{proof}

We end this section by some remarks about the real-valued case.

\begin{lemma}\label{lem:real-complex}
 For any triplet $(p,q,\gamma)$ with $p,q\ge2$, we have
 \begin{equation}\label{eq:real-complex}
    \sup_{u\in L^2(\R,\C)\setminus\{0\}}\frac{\norm{|D_x|^\gamma e^{-t\partial_x^3}u}_{L^p_tL^q_x}}{\norm{u}_{L^2_x}}=\sup_{u\in L^2(\R,\R)\setminus\{0\}}\frac{\norm{|D_x|^\gamma e^{-t\partial_x^3}u}_{L^p_tL^q_x}}{\norm{u}_{L^2_x}}.
 \end{equation}
Moreover, there is a maximizer for the supremum on the left side if and only if there is one for the supremum on the right side. 
\end{lemma}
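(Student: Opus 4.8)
We want the nontrivial inequality ``$\ge$'' in \eqref{eq:real-complex}, i.e.\ that the complex-valued supremum does not exceed the real-valued one; the reverse is trivial since real-valued functions are complex-valued, and a real maximizer is automatically a complex one. Write $\Psi[u]:=|D_x|^\gamma e^{-t\partial_x^3}u$. The strategy is an elementary one-function construction: given a complex-valued $u\neq 0$, split it into real and imaginary parts and compare the two via the parallelogram law. If \eqref{eq:A-S} fails the common value is $+\infty$ and there is nothing to prove, so assume $\Psi$ is bounded $L^2_x\to L^p_tL^q_x$.

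The first ingredient is the identity $\Psi[\bar u]=\overline{\Psi[u]}$, valid because $e^{-t\partial_x^3}$ has a real convolution kernel (equivalently $\partial_x^3$ is a real operator) and $|D_x|^\gamma$ has the real and even Fourier multiplier $|\xi|^\gamma$, so both commute with complex conjugation. Next put
\[
v_0:=u+\bar u=2\,\re u,\qquad v_1:=i(u-\bar u)=-2\,\im u,
\]
which are real-valued with $\|v_j\|_{L^2}\le 2\|u\|_{L^2}$, and by $\C$-linearity of $\Psi$ together with the identity above one has $\Psi[v_0]=\Psi[u]+\overline{\Psi[u]}$ and $\Psi[v_1]=i(\Psi[u]-\overline{\Psi[u]})$. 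The parallelogram law then gives, a.e.,
\[
|v_0|^2+|v_1|^2=4|u|^2,\qquad |\Psi[v_0]|^2+|\Psi[v_1]|^2=4|\Psi[u]|^2 .
\]

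Integrating the first identity over $x$ yields $\|v_0\|_{L^2}^2+\|v_1\|_{L^2}^2=4\|u\|_{L^2}^2$. For the second I would use the isometric rewriting $\|f\|_{L^p_tL^q_x}^2=\bigl\|\,|f|^2\,\bigr\|_{L^{p/2}_tL^{q/2}_x}$ together with the triangle inequality in $L^{p/2}_tL^{q/2}_x$ --- which is a genuine normed space exactly because $p,q\ge 2$ --- to get
\[
4\,\|\Psi[u]\|_{L^p_tL^q_x}^2=\bigl\|\,|\Psi[v_0]|^2+|\Psi[v_1]|^2\,\bigr\|_{L^{p/2}_tL^{q/2}_x}\le \|\Psi[v_0]\|_{L^p_tL^q_x}^2+\|\Psi[v_1]\|_{L^p_tL^q_x}^2 .
\]
Since $u\neq 0$, at least one $v_j$ is nonzero; if both are, the mediant inequality $\tfrac{a_0+a_1}{b_0+b_1}\le\max_j\tfrac{a_j}{b_j}$ (for $b_j>0$), applied to $a_j=\|\Psi[v_j]\|_{L^p_tL^q_x}^2$ and $b_j=\|v_j\|_{L^2}^2$, gives $\max_j\|\Psi[v_j]\|_{L^p_tL^q_x}^2/\|v_j\|_{L^2}^2\ge \|\Psi[u]\|_{L^p_tL^q_x}^2/\|u\|_{L^2}^2$; if only $v_0\neq0$ (i.e.\ $u$ is real) the same conclusion is immediate from the two displays with the $v_1$-terms dropped. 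In either case we have produced a nonzero real-valued $v\in\{v_0,v_1\}$ with $\|\Psi[v]\|_{L^p_tL^q_x}/\|v\|_{L^2}\ge \|\Psi[u]\|_{L^p_tL^q_x}/\|u\|_{L^2}$. Taking the supremum over complex-valued $u$ proves \eqref{eq:real-complex}; and if $u_*$ attains the complex supremum, then the associated $v$ has ratio at least the (now common) supremum, hence $v$ is a real-valued maximizer, the converse being trivial. Specializing to $\gamma=1/p$ recovers $\cA_p=\cA_{p,\R}$ and the maximizer equivalence of the Remark.

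I do not anticipate a serious obstacle: the entire content is in spotting the construction, namely using \emph{both} $2\,\re u$ and $-2\,\im u$ and balancing them through the parallelogram law (using only $2\,\re u$ loses a factor and fails). The two points requiring a moment's care are (i) the identity $\Psi[\bar u]=\overline{\Psi[u]}$ --- that the Airy flow and the fractional derivative commute with complex conjugation --- and (ii) the passage $\bigl\|\,|\Psi[v_0]|^2+|\Psi[v_1]|^2\,\bigr\|\le \|\Psi[v_0]\|^2+\|\Psi[v_1]\|^2$ in the mixed-norm space, which is precisely where the hypothesis $p,q\ge 2$ is used.
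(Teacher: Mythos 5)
Your proof is correct and is essentially the paper's argument: both split $u$ into real and imaginary parts, use that $|D_x|^\gamma e^{-t\partial_x^3}$ commutes with complex conjugation to get the pointwise identity $|\Psi[u]|^2=(\Psi[\re u])^2+(\Psi[\im u])^2$, and then apply the triangle inequality in $L^{p/2}_tL^{q/2}_x$ (valid since $p,q\ge 2$). The only cosmetic differences are your normalization by factors of $2$ and your use of the mediant inequality where the paper simply bounds each real-valued term by the real supremum and uses $\norm{\re u}_{L^2}^2+\norm{\im u}_{L^2}^2=\norm{u}_{L^2}^2$.
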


\begin{proof}
 While the inequality $\ge$ is clear, let us show the inequality $\le$. Hence, let $u\in L^2(\R,\C)$, $u\neq0$. Splitting $u$ as $u=\re u +i\im u$ and using that the operator $|D_x|^\gamma e^{-t\partial_x^3}$ preserves real-valuedness, we deduce that 
 $$||D_x|^\gamma e^{-t\partial_x^3}u|^2=(|D_x|^\gamma e^{-t\partial_x^3}\re u)^2+(|D_x|^\gamma e^{-t\partial_x^3}\im u)^2,$$
 so that 
 $$\norm{|D_x|^\gamma e^{-t\partial_x^3}u}_{L^p_tL^q_x}=\norm{(|D_x|^\gamma e^{-t\partial_x^3}\re u)^2+(|D_x|^\gamma e^{-t\partial_x^3}\im u)^2}_{L^{p/2}_tL^{q/2}_x}^{1/2}.$$
 Using the triangle inequality in $L^{p/2}_tL^{q/2}_x$, we deduce
 \begin{align*}
  \norm{|D_x|^\gamma e^{-t\partial_x^3}u}_{L^p_tL^q_x} &\le \left(\norm{|D_x|^\gamma e^{-t\partial_x^3}\re u}_{L^p_tL^q_x}^2+\norm{|D_x|^\gamma e^{-t\partial_x^3}\im u}_{L^p_tL^q_x}^2\right)^{1/2}\\
  &\le \sup_{v\in L^2(\R,\R)\setminus\{0\}}\frac{\norm{|D_x|^\gamma e^{-t\partial_x^3}v}_{L^p_tL^q_x}}{\norm{v}_{L^2_x}}\left(\norm{\re u}_{L^2_x}^2+\norm{\im u}_{L^2_x}^2\right)^{1/2}.
 \end{align*}
Finally, using that $|u|^2=(\re u)^2+(\im u)^2$ and hence $\left(\norm{\re u}_{L^2_x}^2+\norm{\im u}_{L^2_x}^2\right)^{1/2}=\norm{u}_{L^2_x}$, we deduce the inequality $\le$ in \eqref{eq:real-complex} by taking the supremum over all $u$.

The equality of the suprema in \eqref{eq:real-complex} implies that, if $u$ is a maximizer for the supremum on the right side, then it is also a maximizer for the one on the left side. Conversely, by tracking the equality cases in the previous proof, one sees that if $u$ is a maximizer for the supremum on the left, then $\re u$ or $\im u$ is a maximizer for the supremum on the right.
\end{proof}

\begin{remark}
It is natural to wonder whether maximizers for $\mathcal A_p$ are necessarily complex multiples of real-valued functions. We do not know how to deduce this using the above method of proof. In fact, we from the equality case in the triangle inequality in $L^{p/2}_tL^{q/2}_x$ we learn that either $\im u=0$ or there is a $\lambda\ge 0$ such that $(|D_x|^\gamma e^{-t\partial_x^3} \re u)^2 =\lambda (|D_x|^\gamma e^{-t\partial_x^3} \im u)^2$ almost everywhere.
\end{remark}

Lemma \ref{lem:real-complex} shows that $\cA_p=\cA_{p,\R}$, from which we deduce the `if' part of the analogue of Theorem \ref{thm:main} in the real-valued case (because any real-valued maximizing sequence for $\cA_{p,\R}$ is then also a complex-valued maximizing sequence for $\cA_p$). The `only if' part follows from the fact that the sequence built in Lemma \ref{lem:energy-two-bubbles} is real-valued.

The rest of the article is devoted to the proofs of the results used during this last section.


\section{A refined Airy--Strichartz inequality}\label{sec:refined}

\subsection{Refined inequality and its consequences}

We begin with the compactness result that allows us to extract a non-trivial profile from an optimizing sequence. It will follow from a refined version of the Airy--Strichartz inequality. 

\begin{theorem}\label{thm:refined-stri}
 There are $\theta\in(0,1)$ and $C>0$ such that for all $u\in L^2(\R)$ we have 
 \begin{equation}\label{eq:refined-stri}
    \norm{|D_x|^{1/6}e^{-t\partial_x^3}u}_{L^6_{t,x}(\R\times\R)}\le C\left(\sup_{I\in\cD}|c(I)|^{-\frac16}|I|^{-\frac12}\norm{|D_x|^{1/6}e^{-t\partial_x^3}u_I}_{L^\ii_{t,x}}\right)^\theta\norm{u}_{L^2}^{1-\theta},
 \end{equation}
 where $\cD$ denotes the family of dyadic intervals of $\R$ (see Definition \ref{def:dyadic-interval} below), $|I|$ denotes the length of the interval $I$, $c(I)$ its center, and $\hat{u_I}:=\hat{u}_{|I}$.
\end{theorem}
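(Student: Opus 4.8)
The plan is to bound the square of the left-hand side by Hausdorff--Young, reduce to a bilinear quantity built from Fourier extension along the cubic curve $\eta=\xi^3$, and extract the refinement through a Whitney decomposition in frequency followed by interpolation. Write $S:=\sup_{I\in\cD}|c(I)|^{-1/6}|I|^{-1/2}\norm{\Psi_6[u_I]}_{L^\infty_{t,x}}$ for the quantity on the right-hand side.

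First I would use $\norm{\Psi_6[u]}_{L^6_{t,x}}^2=\norm{\,|\Psi_6[u]|^2\,}_{L^3_{t,x}}\le\norm{\,\widehat{|\Psi_6[u]|^2}\,}_{L^{3/2}_{\tau,k}}$ (Hausdorff--Young on $\R^2$) and expand $|\Psi_6[u]|^2=\sum_{(I,J)}\Psi_6[u_I]\overline{\Psi_6[u_J]}$ along a Whitney decomposition of $\{(\xi_1,\xi_2):\xi_1\neq\xi_2\}$ into pairs of dyadic intervals with $|I|=|J|=:\ell$ and $\dist(I,J)\sim\ell$. Because the cubic curve has parallel tangents at $\pm\xi$, I would additionally subdivide the blocks straddling the anti-diagonal $\xi_1+\xi_2=0$ by dyadic values of $|\xi_1+\xi_2|$ -- this extra layer has no analogue for the parabola and plays here the part that the antipodal interaction plays in \cite{FraLieSab-16}. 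On each block, Plancherel together with the change of variables $(\xi_1,\xi_2)\mapsto(\xi_1^3-\xi_2^3,\,\xi_1-\xi_2)$, whose Jacobian equals $3|\xi_1-\xi_2|\,|\xi_1+\xi_2|$, yields the bilinear estimate $\norm{\Psi_6[u_I]\overline{\Psi_6[u_J]}}_{L^2_{t,x}}\lesssim\ell^{-1/2}\,|c(I)|^{-1/6}\,\norm{u_I}_{L^2}\norm{u_J}_{L^2}$ on the generic blocks, with an analogous bound involving $|\xi_1+\xi_2|$ near the anti-diagonal.

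Next I would interpolate, block by block, this $L^2_{t,x}$ estimate against the pointwise bound $\norm{\Psi_6[u_I]\overline{\Psi_6[u_J]}}_{L^\infty_{t,x}}\le\norm{\Psi_6[u_I]}_{L^\infty_{t,x}}\norm{\Psi_6[u_J]}_{L^\infty_{t,x}}$. Setting $s_I:=|c(I)|^{-1/6}|I|^{-1/2}\norm{\Psi_6[u_I]}_{L^\infty_{t,x}}\le S$, one finds that all the scale factors $\ell$ and $|c(I)|$ cancel, leaving $\norm{\Psi_6[u_I]\overline{\Psi_6[u_J]}}_{L^3_{t,x}}\lesssim(\norm{u_I}_{L^2}\norm{u_J}_{L^2})^{2/3}(s_Is_J)^{1/3}$. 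Moreover Bernstein's inequality gives $\norm{\Psi_6[u_I]}_{L^\infty_{t,x}}\lesssim|c(I)|^{1/6}|I|^{1/2}\norm{u_I}_{L^2}$, i.e.\ $s_I\le\min\{S,\norm{u_I}_{L^2}\}$, and this is the inequality that will let one trade a fractional power of $S$ against $\norm{u}_{L^2}$.

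Finally I would sum. The products $\Psi_6[u_I]\overline{\Psi_6[u_J]}$ attached to one dyadic scale have boundedly overlapping space-time Fourier supports (the map above is a local diffeomorphism off the diagonal and the anti-diagonal, hence at most two-to-one), while products from different scales live in disjoint dyadic slabs in the variable $k=\xi_1-\xi_2$; feeding the per-block bound into the corresponding $L^3_{t,x}$ almost-orthogonality and Littlewood--Paley inequalities, using $\sum_{|I|=\ell}\norm{u_I}_{L^2}^2=\norm{u}_{L^2}^2$ at each scale, and using $s_I\le\min\{S,\norm{u_I}_{L^2}\}$ to gain a small power at each stage, one reaches $\norm{\Psi_6[u]}_{L^6_{t,x}}^2\lesssim S^{2\theta}\norm{u}_{L^2}^{2(1-\theta)}$ for a suitable $\theta\in(0,1)$ (after the routine reduction to $u$ with bounded, compactly supported Fourier transform, which makes all the sums finite). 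I expect this last step to be the main obstacle: a naive triangle-inequality summation over the Whitney blocks and over the dyadic scales produces a non-universal constant, so one must balance the per-block interpolation and invoke the frequency orthogonality precisely so that the surviving exponents on $\norm{u_I}_{L^2}$ exceed $1$ and the resulting geometric series converges with a scaling-invariant bound -- this is exactly what pins $\theta$ down to a specific value in $(0,1)$. The extra bookkeeping for the anti-diagonal blocks is a secondary nuisance.
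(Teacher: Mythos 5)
Your overall architecture matches the paper's: Whitney decomposition of the product, a bilinear estimate obtained from Hausdorff--Young after the change of variables with Jacobian $3|\xi_1^2-\xi_2^2|$, interpolation of each block against $L^\infty_{t,x}$ to bring in the supremum $S$, and an $L^3_{t,x}$ almost-orthogonality to recombine the blocks. Two points, however, are genuine gaps rather than routine bookkeeping.

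First, and most seriously, your final summation does not close as described. After the per-block interpolation you are left with $\sum_{I\sim J}(\norm{u_I}_{L^2}\norm{u_J}_{L^2})\,(s_Is_J)^{1/2}$, and even after using $s_I\le\min\{S,\norm{u_I}_{L^2}\}$ to raise the exponent on $\norm{u_I}_{L^2}$ above $1$, the sum over \emph{all} dyadic intervals diverges: for every scale $\ell$ one has $\sum_{|I|=\ell}\norm{u_I}_{L^2}^2=\norm{u}_{L^2}^2$, so $\sum_{I\in\cD}\norm{u_I}_{L^2}^{2r}$ is infinite for every $r$ (the sum over scales contributes $\norm{u}_{L^2}^{2r}$ infinitely often). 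No choice of exponent on the $L^2$ norms alone produces a convergent geometric series, and the frequency orthogonality has already been spent in passing from the $L^3$ norm of the sum to the sum of $L^3$ norms. The paper's resolution is to interpolate the bilinear bound once more against the trivial estimate $\norm{\Psi_I\Psi_{I'}}_{L^\infty}\lesssim|c(I)|^{1/3}\norm{\hat{u_I}}_{L^1}\norm{\hat{u_{I'}}}_{L^1}$, yielding $\norm{\Psi_I\Psi_{I'}}_{L^3}\lesssim|I|^{1-2/s}\norm{\hat{u_I}}_{L^s}\norm{\hat{u_{I'}}}_{L^s}$ for some $s<2$, and then to invoke the nontrivial summation lemma $\sum_{I\in\cD}\bigl[|I|^{1-2/s}\norm{\hat{u_I}}_{L^s}^2\bigr]^r\le C\norm{u}_{L^2}^{2r}$ for $r>1$ (\cite[Lem.~A.3]{FraLieSab-16}, going back to Tao). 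The gain over scales comes from the exponent $s<2$ on the Fourier side, not from powers of $\norm{u_I}_{L^2}$; this ingredient is absent from your sketch.

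Second, your treatment of the anti-diagonal is both harder than necessary and unverified. The paper splits $u=u_>+u_<$ according to the sign of the frequency at the outset and applies the triangle inequality in $L^6_{t,x}$, after which the Jacobian $3|\xi_1^2-\xi_2^2|$ is comparable to $|I|\,|c(I)|$ on every Whitney block and the anti-diagonal never enters. In your version the extra dyadic parameter $m\sim|\xi_1+\xi_2|$ degrades the bilinear $L^2$ bound by $m^{-1/2}$, and you would need to check both that the interpolation still cancels all scale factors and that the additional sum over $m\lesssim\ell$ converges with a universal constant; calling this a ``secondary nuisance'' understates it. I recommend adopting the sign-splitting reduction, which removes the issue entirely.
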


Similar refined inequalities appeared previously in the literature, for instance in the context of the Sobolev inequality \cite{GerMeyOru-97} (see also \cite[Prop. 4.8]{KilVis-book}), the Stein--Tomas inequality \cite{MoyVarVeg-99,Oliveira-14,Shao-15,FraLieSab-16}, or the Strichartz inequality \cite{Bourgain-98,MerVeg-98,CarKer-07,BegVar-07}. In the context of the Airy equation, related refined inequalities appeared in \cite{KenPonVeg-00} (see \cite{Shao-09b} for a different proof). The particularity of our estimate \eqref{eq:refined-stri} is the presence of an $L^\ii_{t,x}$-norm on the right side, which provides a rather direct route to compactness as we will shortly see. The strategy via such $L^\ii_{t,x}$-norms has been initiated in \cite{Tao-09,KilVis-book}, and we followed it in \cite{FraLieSab-16} (see also, for instance, \cite{FraLie-12}). Contrary to the aforementioned works, the center $c(I)$ of the interval $I$ appears on the right side of \eqref{eq:refined-stri}; this is due to the fact that the Airy--Strichartz inequality is not invariant by translations in Fourier space. This is the first time we encounter such a phenomenon.

From the refined estimate in $L^6_{t,x}$, we deduce a refined estimate in mixed Lebesgue spaces $L^p_tL^q_x$ by complex interpolation. 

\begin{corollary}\label{coro:refined-pq}
  Let $p>4$ and $q$ such that $2/p+1/q=1/2$. Then, there exist $\theta\in(0,1)$ and $C>0$ such that for all $u\in L^2(\R)$ we have 
 \begin{equation}\label{eq:refined-stri-mixed}
    \norm{|D_x|^{1/p}e^{-t\partial_x^3}u}_{L^p_tL^q_x(\R\times\R)}\le C\left(\sup_{I\in\cD}|c(I)|^{-\frac16}|I|^{-\frac12}\norm{|D_x|^{1/6}e^{-t\partial_x^3}u_I}_{L^\ii_{t,x}}\right)^\theta\norm{u}_{L^2}^{1-\theta}.
 \end{equation}
\end{corollary}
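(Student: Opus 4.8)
The plan is to obtain \eqref{eq:refined-stri-mixed} from the diagonal refined estimate of Theorem \ref{thm:refined-stri} by complex interpolation against a plain (unrefined) Airy--Strichartz estimate at a suitable second exponent pair. Given $p>4$ with $q$ determined by $2/p+1/q=1/2$, I would pick $(p_1,q_1)$ on the same critical line $2/p_1+1/q_1=1/2$ so that $(1/p,1/q)$ is a proper convex combination of $(1/6,1/6)$ and $(1/p_1,1/q_1)$: if $p\ge6$, take $(p_1,q_1)=(\ii,2)$, where \eqref{eq:A-S} reduces to the $L^2$-unitarity $\norm{e^{-t\partial_x^3}u}_{L^\ii_tL^2_x}=\norm{u}_{L^2}$ and the smoothing exponent is $1/p_1=0$; if $4<p<6$, take any $p_1\in(4,p)$ and $q_1:=2p_1/(p_1-4)\in(6,\ii)$, a non-endpoint pair for which \eqref{eq:A-S} gives $\norm{|D_x|^{1/p_1}e^{-t\partial_x^3}u}_{L^{p_1}_tL^{q_1}_x}\le C\norm{u}_{L^2}$. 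In either case there is $\eta\in(0,1)$ with $1/p=(1-\eta)/6+\eta/p_1$; because the two endpoints lie on the \emph{same} critical line one automatically gets $1/q=(1-\eta)/6+\eta/q_1$, and the linearly interpolated smoothing exponent $(1-\eta)\tfrac16+\eta\tfrac1{p_1}$ equals exactly $1/p=\gamma$. This bookkeeping is the reason the critical family is the natural thing to interpolate.

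I would then interpolate the analytic family $T_zu:=|D_x|^{\gamma(z)}e^{-t\partial_x^3}u$ with $\gamma(z):=\tfrac{1-z}{6}+\tfrac{z}{p_1}$ on the strip $\{0\le\re z\le1\}$. Writing $|D_x|^{\gamma(z)}=|D_x|^{\re\gamma(z)}\,|D_x|^{i\,\im\gamma(z)}$ and using that the imaginary powers $|D_x|^{is}$ are Fourier multipliers bounded on $L^r(\R)$ for every $1<r<\ii$ with norm $O((1+|s|)^{N})$ — hence also on the mixed spaces $L^a_tL^r_x$, since they act only in $x$ — the family $\{T_z\}$ has admissible (polynomial) growth on the strip, and on the boundary lines Theorem \ref{thm:refined-stri} gives $\norm{T_{iy}u}_{L^6_{t,x}}\le C(1+|y|)^{N}M(u)^{\theta}\norm{u}_{L^2}^{1-\theta}$, with $M(u):=\sup_{I\in\cD}|c(I)|^{-1/6}|I|^{-1/2}\norm{|D_x|^{1/6}e^{-t\partial_x^3}u_I}_{L^\ii_{t,x}}$, while the choice of $(p_1,q_1)$ gives $\norm{T_{1+iy}u}_{L^{p_1}_tL^{q_1}_x}\le C(1+|y|)^{N}\norm{u}_{L^2}$. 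Since the first of these is not a linear operator bound (the functional $M(u)$ is nonlinear), I would not interpolate $T_z$ directly but rather the scalar holomorphic function $z\mapsto\pscal{T_zu,g_z}_{L^2_{t,x}}$ for fixed $u$ and fixed simple $g$ with $\norm{g}_{L^{p'}_tL^{q'}_x}\le1$, where $g_z$ is the standard Stein family in the dual mixed Lebesgue spaces with $g_\eta=g$ — equivalently, interpolate the rank-one operators $\lambda\mapsto\lambda\,T_zu$, whose boundary norms are exactly the two quantities above.

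The three-lines theorem (in the form allowing polynomial growth on the strip) then gives $|\pscal{\Psi_p[u],g}|\le C\,\big(M(u)^{\theta}\norm{u}_{L^2}^{1-\theta}\big)^{1-\eta}\norm{u}_{L^2}^{\eta}$, and taking the supremum over admissible simple $g$ yields
\[
\norm{\Psi_p[u]}_{L^p_tL^q_x}\le C\,M(u)^{\theta(1-\eta)}\,\norm{u}_{L^2}^{1-\theta(1-\eta)},
\]
which is precisely \eqref{eq:refined-stri-mixed} with the exponent $\theta(1-\eta)\in(0,1)$ in place of $\theta$ (both factors of the right-hand side are degree-one homogeneous in $u$, so nothing is lost by not normalizing). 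The only genuinely delicate points are checking that $z\mapsto\pscal{T_zu,g_z}$ is holomorphic with admissible growth on the strip (which one can do on the dense class of $u$ with $\hat u$ compactly supported away from the origin) and that the Stein family for the mixed norm $L^p_tL^q_x$ is set up correctly; I expect this to be the main obstacle, whereas the interpolation of exponents and the degeneration of the $(\ii,2)$ endpoint to $L^2$-unitarity are routine.
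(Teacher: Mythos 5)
Your proposal is correct and follows essentially the same route as the paper: both deduce \eqref{eq:refined-stri-mixed} from Theorem \ref{thm:refined-stri} by complex interpolation along the analytic family $|D_x|^{\gamma(z)}e^{-t\partial_x^3}$ against a plain Airy--Strichartz bound at a second pair on the critical line $2/\tilde p+1/\tilde q=1/2$ (the paper packages this as the fixed-function inequality of Proposition \ref{prop:complex-interpolation}, which sidesteps the nonlinearity of $M(u)$ that you instead handle by fixing $u$ and interpolating the rank-one maps --- both are valid). The only minor deviation is your use of the endpoint $(\tilde p,\tilde q)=(\infty,2)$ for $p>6$, where the Stein dual family lives in $L^1_tL^2_x$ and needs a touch more care than the non-endpoint choice $\tilde p>p$ made in the paper, but this is cosmetic.
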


\begin{proof}
When $p=6$, this is Theorem \ref{thm:refined-stri}. When $p\neq6$, we distinguish two cases. If $p>6$, we pick $\tilde{p}>p$ and $\tilde{q}$ such that $2/\tilde{p}+1/\tilde{q}=1/2$ so that by Proposition \ref{prop:complex-interpolation} we have
$$\norm{|D_x|^{1/p}e^{-t\partial_x^3}u}_{L^p_tL^q_x}\le\norm{|D_x|^{1/\tilde{p}}e^{-t\partial_x^3}u}_{L^{\tilde{p}}_tL^{\tilde{q}}_x}^\theta\norm{|D_x|^{1/6}e^{-t\partial_x^3}u}_{L^6_{t,x}}^{1-\theta},$$
where $\theta\in(0,1)$ is such that $1/p=\theta/\tilde{p}+(1-\theta)/6$. By the Airy--Strichartz inequality, we have 
$$\norm{|D_x|^{1/\tilde{p}}e^{-t\partial_x^3}u}_{L^{\tilde{p}}_tL^{\tilde{q}}_x}\le C\norm{u}_{L^2_x}.$$
The $L^6_{t,x}$-part may be estimated by Theorem \ref{thm:refined-stri}, which gives the result. When $p<6$, we interpolate in the same fashion $L^p_tL^q_x$ between $L^6_{t,x}$ and $L^{\tilde{p}}_t L^{\tilde{q}}_x$ with $4<\tilde{p}<p$, which gives the same result.
\end{proof}

Our main interest in proving the refined estimate \eqref{eq:refined-stri-mixed} is to find a non-trivial weak limit (a first 'profile' in the profile decomposition), in the case of non-vanishing sequences (in the language of concentration-compactness).

\begin{corollary}\label{coro:refined}
 Let $p>4$ and $q$ such that $2/p+1/q=1/2$. Let $(u_n)$ be a bounded sequence in $L^2(\R)$ with $\text{supp}\,\hat{u_n}\subset\R_\pm$ such that 
$$
|D_x|^{1/p}e^{-t\partial_x^3}u_n\nrightarrow0
\qquad\text{in}\ L^p_tL^q_x \,. 
 $$
Then, there are $(t_n,x_n,\xi_n,\lambda_n)\subset\R\times\R\times\R_{\pm}\times\R_+^*$ with $\lambda_n/|\xi_n|\le2$ such that, after passing to a subsequence if necessary, the sequence $((\hat{g_nu_n})(\cdot+\xi_n/\lambda_n))$ with $g_n=g_{t_n,x_n,\lambda_n^{-1}}$
 has a non-zero weak limit $v$ in $L^2(\R)$. Furthermore, if $a,b>0$ are such that 
 $$\limsup_{n\to\ii}\norm{|D_x|^{1/p}e^{-t\partial_x^3}u_n}_{L^p_tL^q_x}\ge a,\qquad \norm{u_n}_{L^2}\le b,$$
 then we have the estimate 
 $$\norm{v}_{L^2}\ge ca^\alpha b^{-\beta},$$
 where $c,\alpha,\beta>0$ depend only on $p$.
\end{corollary}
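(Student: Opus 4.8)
The plan is to deduce Corollary \ref{coro:refined} from the refined inequality in Corollary \ref{coro:refined-pq} by a routine concentration-compactness extraction argument. First I would apply \eqref{eq:refined-stri-mixed} to each $u_n$: since $\norm{|D_x|^{1/p}e^{-t\partial_x^3}u_n}_{L^p_tL^q_x}$ does not tend to zero (passing to a subsequence, it stays bounded below by some $a'>0$) while $\norm{u_n}_{L^2}$ stays bounded by $b$, the factor $\left(\sup_{I\in\cD}|c(I)|^{-1/6}|I|^{-1/2}\norm{|D_x|^{1/6}e^{-t\partial_x^3}(u_n)_I}_{L^\ii_{t,x}}\right)^\theta$ must stay bounded below as well; quantitatively, $\sup_{I\in\cD}|c(I)|^{-1/6}|I|^{-1/2}\norm{|D_x|^{1/6}e^{-t\partial_x^3}(u_n)_I}_{L^\ii_{t,x}}\ge c_1 (a')^{1/\theta} b^{-(1-\theta)/\theta}$. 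Hence for each $n$ there is a dyadic interval $I_n$, of center $\xi_n$ (which lies in $\R_\pm$ since $\supp\hat u_n\subset\R_\pm$) and length $\ell_n$ with $\ell_n\le 2|\xi_n|$ — because a dyadic interval not containing $0$ has length at most comparable to, in fact here exactly at most, the absolute value of its center, so after adjusting constants one gets $\ell_n/|\xi_n|\le 2$ — such that $|\xi_n|^{-1/6}\ell_n^{-1/2}\norm{|D_x|^{1/6}e^{-t\partial_x^3}(u_n)_{I_n}}_{L^\ii_{t,x}}\ge \tfrac12 c_1(a')^{1/\theta}b^{-(1-\theta)/\theta}$.

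Next I would realize this lower bound on an $L^\ii_{t,x}$-norm as a lower bound on a pointwise value of a rescaled sequence. Pick $(t_n,y_n)$ where $|D_x|^{1/6}e^{-t\partial_x^3}(u_n)_{I_n}$ is within a factor $2$ of its supremum, and set $\lambda_n:=\ell_n$, $g_n:=g_{t_n,x_n,\lambda_n^{-1}}$ with $x_n$ chosen to match the spatial translation, so that (using the scaling covariance of $\Psi_p$ and of the truncation, together with the identity $|D_x|^{1/6}=|D_x|^{1/6}$ being homogeneous of the right degree) the translated, rescaled, Fourier-shifted functions $w_n:=(\widehat{g_nu_n})(\cdot+\xi_n/\lambda_n)$ satisfy: $(w_n)$ is bounded in $L^2$ (same norm as $u_n$ up to the unitary changes), $\supp\hat{w_n}$ is contained in a fixed interval around $0$ (namely, the unit interval coming from $I_n$ after rescaling by $\ell_n$ and translating by $\xi_n$), and there is a fixed point — or a sequence of points in a fixed compact set, which after a further subsequence converges — at which $|D_x|^{1/6}e^{-t\partial_x^3}$ applied to (the inverse Fourier transform of $\hat{w_n}$ restricted to that unit interval) is bounded below by $c_2(a')^{1/\theta}b^{-(1-\theta)/\theta}$. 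The point here is that $|c(I_n)|^{-1/6}|I_n|^{-1/2}$ is exactly the normalization that makes the $L^\ii$-estimate scale-invariant under $g_n$.

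Then I would pass to a weak limit: $(w_n)$ bounded in $L^2$, so along a subsequence $w_n\rightharpoonup v$ in $L^2(\R)$. It remains to check $v\neq 0$ and to get the quantitative lower bound on $\norm{v}_{L^2}$. For this, observe that the Fourier supports of $w_n$ lie in a fixed bounded set (say an interval $J$ of length $4$ around $0$), on which the symbol $|\xi|^{1/6}$ is bounded, so the map $L^2(J)\ni f\mapsto \big(|D_x|^{1/6}e^{-t\partial_x^3}f\big)(t_*,y_*)$ — evaluation of the Airy evolution at a single space-time point — is a bounded \emph{linear} functional on $L^2(J)$ (by Cauchy--Schwarz on the Fourier side, since $\xi\mapsto |\xi|^{1/6}e^{it_*\xi^3+iy_*\xi}\mathbf 1_J(\xi)$ is in $L^2$); more precisely its value is $\langle w_n, \phi_n\rangle$ for an $L^2$-function $\phi_n$ with $\norm{\phi_n}_{L^2}\le C|J|^{1/2}$ uniformly. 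After passing to a further subsequence so that $(t_*,y_*)$ and the relevant point converge, $\phi_n\to\phi$ in $L^2$, and the weak convergence $w_n\rightharpoonup v$ gives $\langle v,\phi\rangle=\lim\langle w_n,\phi_n\rangle$, whose modulus is $\ge c_2(a')^{1/\theta}b^{-(1-\theta)/\theta}$; hence $\norm{v}_{L^2}\ge \norm{\phi}_{L^2}^{-1}\,c_2(a')^{1/\theta}b^{-(1-\theta)/\theta}\ge c(a')^{1/\theta}b^{-(1-\theta)/\theta}$ with constants depending only on $p$. Renaming $a'$ as $a$ (valid since one may always pass to a subsequence realizing the $\limsup$), this is the asserted bound $\norm{v}_{L^2}\ge c\,a^{\alpha}b^{-\beta}$ with $\alpha=1/\theta$, $\beta=(1-\theta)/\theta$.

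The main obstacle, and the place requiring real care rather than routine bookkeeping, is the bookkeeping of the three-parameter symmetry group together with the extra Fourier translation by $\xi_n/\lambda_n$: one must check that the truncation $\hat u\mapsto \hat u_I$ transforms correctly under $g_n$ and the frequency shift, that the normalizing weight $|c(I)|^{-1/6}|I|^{-1/2}$ is precisely the one making the $L^\ii_{t,x}$-quantity invariant under the rescaling $\lambda_n=|I_n|$ (this is where the cube $\eta=\xi^3$ forces the weight $|c(I)|^{-1/6}$ — translating the frequency window away from $0$ changes the size of $|D_x|^{1/6}$ there), and that the constraint $\lambda_n/|\xi_n|\le 2$ comes out of the dyadic structure. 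Everything else — the lower bound from the refined inequality, the evaluation functional being bounded on $L^2$ of a fixed-size Fourier window, and the weak-limit argument — is standard.
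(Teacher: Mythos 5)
Your proposal is correct and follows essentially the same route as the paper: extract a dyadic interval $I_n$ from the refined inequality of Corollary \ref{coro:refined-pq}, set $\xi_n=c(I_n)$, $\lambda_n=|I_n|$ (whence $\lambda_n/|\xi_n|\le 2$ by dyadicity), absorb the point $(t_n,x_n)$ where the $L^\ii_{t,x}$-norm is nearly attained into $g_n$, and test the weak limit of $(\hat{g_nu_n})(\cdot+\xi_n/\lambda_n)$ against the weight $\1_{(-1/2,1/2)}|1+(\lambda_n/\xi_n)\cdot|^{1/6}$, which converges strongly in $L^2$ along a subsequence with $\lambda_n/|\xi_n|\to\delta\in[0,2]$, giving both $v\neq0$ and the Cauchy--Schwarz lower bound with $\alpha=1/\theta$, $\beta=(1-\theta)/\theta$. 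One harmless slip: the full sequence $w_n$ does \emph{not} have Fourier support in a fixed interval (only the truncation to $I_n$ does), but since your test functional is supported in the unit frequency window this does not affect the argument.
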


\begin{remark}
 In the previous statement, when we write $(\xi_n)\subset\R_\pm$, we mean that we may choose $(\xi_n)\subset\R_+$ if $\text{supp}\,\hat{u_n}\subset\R_+$ or $(\xi_n)\subset\R_-$ if $\text{supp}\,\hat{u_n}\subset\R_-$.
\end{remark}

\begin{proof}
By the refined estimate \eqref{eq:refined-stri-mixed}, there are $(t_n,x_n)\subset\R\times\R$ and dyadic intervals $I_n\subset\R_\pm$ such that, along a subsequence,
 $$|c(I_n)|^{-\frac16}|I_n|^{-\frac12}\left|\int_{I_n} |\xi|^{1/6}e^{ix_n\xi+it_n\xi^3}\hat{u_n}(\xi)\,d\xi\right|\ge\epsilon:=\left(\frac{a}{2Cb^{1-\theta}}\right)^{\frac1\theta} \,,$$
where $C,\theta$ are the constants appearing in \eqref{eq:refined-stri-mixed}. Denote by $\xi_n:=c(I_n)$ and $\lambda_n=|I_n|$ so that $I_n=\xi_n+\lambda_n(-1/2,1/2)$, and $0<\lambda_n/|\xi_n|\le2$ (see Definition \ref{def:dyadic-interval} below). Up to a subsequence, we may assume that $\lambda_n/|\xi_n|\to\delta\in[0,2]$. In the previous integral, write any $\xi$ as $\xi=\xi_n+\lambda_n\eta$ to obtain
 $$\left|\int_{(-1/2,1/2)}\left|1+\frac{\lambda_n}{\xi_n}\eta\right|^{1/6}e^{ix_n(\xi_n+\lambda_n\eta)+it_n(\xi_n+\lambda_n\eta)^3}\lambda_n^{1/2}\hat{u_n}(\xi_n+\lambda_n\eta)\,d\eta\right|\ge\epsilon$$
 for all $n$. The sequence
 $$v_n(\eta):=e^{ix_n(\xi_n+\lambda_n\eta)+it_n(\xi_n+\lambda_n\eta)^3}\lambda_n^{1/2}\hat{u_n}(\xi_n+\lambda_n\eta)$$
 is bounded in $L^2(\R)$. Pick any weak limit $v$ of it. Since we have the convergence
 $$\1_{(-1/2,1/2)}|1+(\lambda_n/|\xi_n|)\cdot|^{1/6}\to\1_{(-1/2,1/2)}|1+\delta\cdot|^{1/6}$$
 strongly in $L^2(\R)$, we deduce that
 $$\left|\int_{(-1/2,1/2)}|1+\delta\eta|^{1/6}v(\eta)\,d\eta\right|\ge\epsilon>0,$$
 implying that $v\neq0$. By the Cauchy-Schwarz inequality, we also have
 $$\epsilon\le\norm{v}_{L^2}\left(\int_{(-1/2,1/2)}|1+\delta\eta|^{1/3}\,d\eta\right)^{1/2}\le\norm{v}_{L^2}\left(\int_{(-1/2,1/2)}(1+2|\eta|)^{1/3}\,d\eta\right)^{1/2},$$
 since $\delta\in[0,2]$, which implies the desired lower bound on $\norm{v}_{L^2}$.
\end{proof}

We now turn to the proof of Theorem \ref{thm:refined-stri}, following the strategy of \cite[App. A]{Tao-09} and \cite[Prop. 4.24]{KilVis-book}, that we also followed in \cite{FraLieSab-16}. We first state some properties of dyadic intervals.

\subsection{Dyadic intervals}

\begin{definition}\label{def:dyadic-interval}
An interval $I\subset\R$ is \emph{dyadic} if it can be written as $I=[k,k+1)2^\ell$ with $k\in\Z$ and $\ell\in\Z$. For any interval $I$, we denote by $|I|$ its length and $c(I)$ its center. 
\end{definition}

Any dyadic interval generates two dyadic sub-intervals of half length, and reciprocally any dyadic interval has a unique parent of double length, which is also a dyadic interval. Two dyadic intervals are said to be \emph{adjacent} if they have the same length and share an extremity. 

\begin{definition}
For two dyadic intervals $I$ and $I'$, we write $I\sim I'$ if $I$ and $I'$ are not adjacent, if their parents are not adjacent, but their grand-parents are adjacent (in particular, they have the same length). 
\end{definition}

In the following lemma, we record several useful properties of such intervals.

\begin{lemma}\label{lem:dyadic-intervals}
 Assume $I$ and $I'$ are two dyadic intervals with $I\sim I'$ and either $I,I'\subset\R_+$ or $I,I'\subset\R_-$. Then, the following properties hold:
 \begin{enumerate}
  \item $\forall\eta\in I,\,|\eta|\le2|c(I)|$,
  \item $|c(I')|\le15|c(I)|$,
  \item $\forall\eta\in I,\,\forall\eta'\in I',\, \frac45|c(I+I')|\le|\eta+\eta'|\le\frac65|c(I+I')|$,
  \item $\forall\eta\in I,\,\forall\eta'\in I',\,2|I|\le|\eta-\eta'|\le8|I|$.
 \end{enumerate}
\end{lemma}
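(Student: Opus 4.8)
The plan is to reduce the four inequalities to an elementary, essentially finite, verification. First I would normalize. The reflection $\eta\mapsto-\eta$ maps dyadic intervals to dyadic intervals, negates centres, carries the sumset $I+I'$ to $-(I+I')$, preserves lengths and the relation $\sim$, and interchanges $\R_+$ and $\R_-$; so it is enough to treat $I,I'\subset\R_+$, and then each of $c(I)$, $c(I')$, $c(I+I')=c(I)+c(I')$, and $\eta+\eta'$, $\eta'-\eta$ (for $\eta\in I$, $\eta'\in I'$) is nonnegative, so the absolute values are harmless. Since the truth of each of (1)--(4) is unchanged under $\eta\mapsto 2^k\eta$, I may also take $|I|=|I'|=1$, so the common grandparent has length $4$; relabelling $I\leftrightarrow I'$ if needed (legitimate, since (1), (3), (4) are symmetric in the two intervals and (2) is immediate when the grandparent of $I$ lies to the right of that of $I'$) I may assume the grandparent $G$ of $I$ lies to the left of the grandparent $G'$ of $I'$, i.e.\ $G=[4m,4m+4)$ and $G'=[4m+4,4m+8)$ for some integer $m\ge0$. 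Thus $I$ is one of the four grandchildren of $G$ and $I'$ one of the four grandchildren of $G'$.

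I would then throw away the configurations forbidden by $I\sim I'$. The parent of $I$ lies in $G$ and the parent of $I'$ in $G'$, both of length $2$, the former to the left of the latter; they are adjacent exactly when the parent of $I$ is the right half $[4m+2,4m+4)$ of $G$ and the parent of $I'$ is the left half $[4m+4,4m+6)$ of $G'$, i.e.\ exactly when $I$ lies in the right half of $G$ and $I'$ in the left half of $G'$. The four such configurations --- one of which is the unique configuration in which $I$ and $I'$ themselves are adjacent --- are precisely those excluded, leaving twelve admissible configurations, for each value of $m\ge0$.

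For these, each of (1)--(4) unwinds to something elementary. Statement (1) uses only $I\subset\R_+$ (and not $\sim$ at all): if $I=[\ell,\ell+1)$ with $\ell\ge0$, then $|\eta|\le\ell+1\le 2\ell+1=2|c(I)|$ for all $\eta\in I$. For (2) one needs $c(I')\le 15\,c(I)$; since $c(I)\ge 4m+\tfrac12$ and $c(I')\le 4m+\tfrac{15}{2}$, the ratio $c(I')/c(I)$ is maximal at $m=0$, where it equals $15$. For (3), set $M:=c(I)+c(I')=c(I+I')$; since $\eta+\eta'$ lies in the length-$2$ interval $I+I'$ with centre $M$, one has $M-1\le\eta+\eta'<M+1$, and since $M\ge(4m+\tfrac12)+(4m+\tfrac92)=8m+5\ge5$ both $M-1\ge\tfrac45 M$ and $M+1\le\tfrac65 M$ hold, which is (3). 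For (4), from $\eta<4m+4\le\eta'$ the difference $\eta'-\eta$ is positive and at most $(4m+8)-4m=8$; the lower bound $\eta'-\eta>2$ follows by checking, over the twelve admissible configurations, that the left endpoint of $I'$ exceeds the right endpoint of $I$ by at least $2$ (the value $2$ being attained, e.g., for $I=[4m+1,4m+2)$, $I'=[4m+4,4m+5)$). In every case the worst situation is $m=0$, so this is a genuinely finite check.

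There is no hard step; the argument is bookkeeping. The only real content is the \emph{lower} bounds in (3) and (4): these are Whitney-type separation statements, and it is exactly the requirement in the definition of $\sim$ that neither the two intervals nor their parents be adjacent which stops $\eta'-\eta$ (hence the ratio $|\eta+\eta'|/|c(I+I')|$) from degenerating, while the ``grandparents adjacent'' clause supplies the matching \emph{upper} bounds. The two points needing care are the half-open endpoints, so that strict versus non-strict inequalities fall on the correct side, and checking that the reduction to $\R_+$ is legitimate, i.e.\ that both $\sim$ and ``$\subset\R_\pm$'' are preserved by $\eta\mapsto-\eta$. As a consistency check on the set-up, the constants $2$, $8$, $\tfrac45$, $\tfrac65$, $15$ are all essentially sharp, each attained on a boundary configuration with $m=0$ (for instance $c(I')=15\,c(I)$ for $I=[0,1)$, $I'=[7,8)$).
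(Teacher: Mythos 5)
Your proof is correct and follows essentially the same route as the paper's: reduce to $I,I'\subset\R_+$ by reflection and rescaling, then read the four bounds off the combinatorial constraints that $\sim$ imposes on the positions of $I$ and $I'$ inside their adjacent grandparents. Your version is more explicit than the paper's (which, e.g., derives (2) and (3) from the inequalities $k'\le k+7$ and $k+k'\ge 4$ and dismisses (4) with ``follows from the fact that $I\sim I'$''), but the substance and the sharp constants are identical.
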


\begin{proof}
 Up to replacing $I$ by $-I$ and $I'$ by $-I'$, we may assume that $I,I'\subset\R_+$. We thus may write $I=[k,k+1)|I|$ and $I'=[k',k'+1)|I'|$ for some $k,k'\ge0$. We have $c(I)=(k+1/2)|I|\ge(1/2)|I|$, hence for any $\eta\in I$, we have $\eta\le c(I)+(1/2)|I|\le 2c(I)$, which is (1). Furthermore, since $I\sim I'$, we deduce $k'\le k+7$, and hence $c(I')=(k'+1/2)|I|\le(k+15/2)|I|\le15(k+1/2)|I|=15c(I)$, which is (2). Now let $\eta\in I$, $\eta'\in I'$. Since $I\sim I'$, we have $k+k'\ge4$, hence $c(I+I')=(k+k'+1)|I|\ge5|I|$ which implies $|I|\le(1/5)c(I+I')$. As a consequence 
 $$ \frac45 c(I+I')\le c(I+I')-|I|\le\eta+\eta'\le c(I+I')+|I|\le \frac65 c(I+I'),$$
 which is (3). Finally, (4) follows from the fact that $I\sim I'$. 
\end{proof}

\subsection{Bilinear estimates}

Bilinear estimates are the main building blocks to obtain refined inequalities. In the context of the Stein--Tomas or Strichartz inequalities, they are provided by the deep result of \cite{Tao-03}. Since we work in one space dimension, bilinear estimates are rather easy to obtain by the Hausdorff--Young inequality (as done for instance in \cite[Lem 1.2]{Shao-09b} or \cite[Prop. 2.1]{CarKer-07}). One special feature of our approach is the distinction between positive or negative frequencies, which we may interpret as the separation between the two 'conjugate' points $\xi$ and $-\xi$ which are the main enemies for proving compactness.

For any function $u\in L^2(\R)$ and any interval $I\subset\R$, we define the function $u_I$ by the relation 
$$\hat{u_I}=\1_I\hat{u}.$$

\begin{lemma}\label{lem:bilinear}
 For all $q\ge2$, there exists $C>0$ such that for all dyadic intervals $I\sim I'$ with either $I,I'\subset\R_+$ or $I,I'\subset\R_-$, and for any $u,v\in L^2(\R)$ we have 
 \begin{equation}
  \norm{\left(|D_x|^{1/6}e^{-t\partial_x^3}u_I\right)\left(|D_x|^{1/6}e^{-t\partial_x^3}v_{I'}\right)}_{L^q_{t,x}(\R\times\R)}\le C|c(I)|^{\frac13-\frac1q}|I|^{1-\frac3q}\norm{u}_{L^2(\R)}\norm{v}_{L^2(\R)}.
 \end{equation}
\end{lemma}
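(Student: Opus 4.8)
The plan is to reduce the bilinear $L^q_{t,x}$ estimate to an $L^{q'}$ bound on the space-time Fourier transform of the product via the Hausdorff--Young inequality, and then to exploit the geometry of the cubic curve $\eta=\xi^3$ together with the separation properties recorded in Lemma~\ref{lem:dyadic-intervals}. First I would write, for $\xi\in I$ and $\xi'\in I'$, the product
$$
\left(|D_x|^{1/6}e^{-t\partial_x^3}u_I\right)(t,x)\left(|D_x|^{1/6}e^{-t\partial_x^3}v_{I'}\right)(t,x)
=\frac{1}{2\pi}\iint_{I\times I'}|\xi|^{1/6}|\xi'|^{1/6}e^{it(\xi^3+\xi'^3)+ix(\xi+\xi')}\hat u(\xi)\hat v(\xi')\,d\xi\,d\xi',
$$
so that its space-time Fourier transform is a pushforward measure supported on $\{(\sigma,\tau)=(\xi+\xi',\,\xi^3+\xi'^3):\xi\in I,\xi'\in I'\}$. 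The key change of variables is $(\xi,\xi')\mapsto(\sigma,\tau)=(\xi+\xi',\xi^3+\xi'^3)$, whose Jacobian is $\partial(\tau,\sigma)/\partial(\xi,\xi')=3(\xi'^2-\xi^2)=3(\xi'-\xi)(\xi'+\xi)$; by items (3) and (4) of Lemma~\ref{lem:dyadic-intervals} this is comparable to $|c(I+I')|\,|I|$ uniformly on $I\times I'$ (and in particular nonvanishing, so the map is a bi-Lipschitz diffeomorphism onto its image). Hence the $L^{q'}_{\sigma,\tau}$ norm of the Fourier transform equals, up to the factor $|\xi|^{1/6}|\xi'|^{1/6}$ which by items (1)--(2) is comparable to $|c(I)|^{1/3}$, an integral of $|\hat u(\xi)\hat v(\xi')/((\xi'-\xi)(\xi'+\xi))|^{q'}$ against the measure $(\xi'-\xi)(\xi'+\xi)\,d\xi\,d\xi'$ pulled back.

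The next step is to apply Hausdorff--Young, $\|F\|_{L^q_{t,x}}\le (2\pi)^{\text{something}}\|\widehat F\|_{L^{q'}_{\sigma,\tau}}$ for $q\ge 2$, and then Hölder (or a direct $L^2$ argument when $q=2$) in the $(\xi,\xi')$ variables to pull the $\hat u,\hat v$ out in $L^2$. Concretely, after the change of variables the $L^{q'}$ norm becomes
$$
\left(\iint_{I\times I'}|\xi|^{q'/6}|\xi'|^{q'/6}\,|\hat u(\xi)|^{q'}|\hat v(\xi')|^{q'}\,\big(3|\xi'^2-\xi^2|\big)^{1-q'}\,d\xi\,d\xi'\right)^{1/q'},
$$
and since $1-q'\le 0$ and $|\xi'^2-\xi^2|\gtrsim |c(I+I')|\,|I|\gtrsim |c(I)|\,|I|$ (using also $|c(I+I')|\ge |c(I)|$ up to constants, which follows from items (1)--(3)), one bounds $(|\xi'^2-\xi^2|)^{1-q'}\le (c|c(I)||I|)^{1-q'}$. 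Combined with $|\xi|^{1/6}|\xi'|^{1/6}\lesssim |c(I)|^{1/3}$ and $\|\hat u_I\|_{L^{q'}}\le |I|^{1/q'-1/2}\|\hat u\|_{L^2}$ by Hölder on the interval $I$ of length $|I|$, one collects the powers: $|c(I)|^{1/3}\cdot(|c(I)||I|)^{1/q'-1}\cdot |I|^{2(1/q'-1/2)}=|c(I)|^{1/3-1/q}|I|^{1/q'-1-1+2/q'-1+\ldots}$; tracking this carefully yields exactly $|c(I)|^{1/3-1/q}|I|^{1-3/q}$ as claimed (the exponent $1-3/q$ matches the scaling: replacing $u,v$ by $L^2$-normalized bumps on an interval of length $|I|$ and using the scaling symmetry of the Airy flow forces this power, which is a good consistency check).

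The main obstacle I expect is making the change of variables $(\xi,\xi')\mapsto(\xi+\xi',\xi^3+\xi'^3)$ fully rigorous as a bijection with controlled Jacobian on $I\times I'$ — one must know $\xi\ne\xi'$ throughout, which is precisely where the hypothesis $I\sim I'$ (non-adjacent, non-adjacent parents, adjacent grandparents) and item (4) of Lemma~\ref{lem:dyadic-intervals} are essential, and one must handle the two-to-one nature coming from the symmetry $(\xi,\xi')\leftrightarrow(\xi',\xi)$, which only costs a harmless constant. A secondary technical point is the endpoint $q=2$, where Hausdorff--Young is an isometry (Plancherel) and the argument simplifies, and the case $q=\infty$, which one can either include by a direct $L^\infty$ estimate (bounding the kernel integral by Cauchy--Schwarz) or obtain by interpolation; but since the statement is for $q\ge 2$ finite values the Hausdorff--Young route covers everything uniformly. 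Throughout, all implicit constants depend only on $q$, as required.
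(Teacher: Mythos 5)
Your proposal is correct and follows essentially the same route as the paper's proof: apply Hausdorff--Young after the change of variables $(\xi,\xi')\mapsto(\xi+\xi',\xi^3+\xi'^3)$, control the Jacobian $3|\xi'^2-\xi^2|$ and the weight $|\xi\xi'|^{1/6}$ via Lemma~\ref{lem:dyadic-intervals}, and finish with H\"older on $I$ and $I'$; the exponent bookkeeping ($|c(I)|^{1/q'-2/3}|I|^{3/q'-2}=|c(I)|^{1/3-1/q}|I|^{1-3/q}$) indeed closes as you indicate. Your side remarks on injectivity and the endpoints are fine but not needed, since $I\sim I'$ forces $I\cap I'=\emptyset$ and the map is injective on $I\times I'$.
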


\begin{remark}
 The point of the previous lemma is to have $q<3$. Hence, when assuming some properties of the Fourier support, one can do better than the Airy--Strichartz inequality.
\end{remark}

\begin{proof}
 We have the identity for all $x\in\R$
 \begin{multline*}
  \left(|D_x|^{1/6}e^{-t\partial_x^3}u_I\right)(x)\left(|D_x|^{1/6}e^{-t\partial_x^3}v_{I'}\right)(x)\\
  =\frac{1}{2\pi}\int_\R\int_\R e^{ix(\eta+\eta')+it(\eta^3+\eta'^3)}|\eta|^{1/6}|\eta'|^{1/6}\hat{u_I}(\eta)\hat{v_{I'}}(\eta')\,d\eta\,d\eta'.
 \end{multline*}
Denoting by $f(\eta,\eta'):=|\eta|^{1/6}|\eta'|^{1/6}\hat{u_I}(\eta)\hat{v_{I'}}(\eta')$ and changing variables $(r,s)=(\eta+\eta',\eta^3+\eta'^3)=\psi(\eta,\eta')$, we find that
\begin{multline*}
  \left(|D_x|^{1/6}e^{-t\partial_x^3}u_I\right)(x)\left(|D_x|^{1/6}e^{-t\partial_x^3}v_{I'}\right)(x)\\
  =\frac{1}{6\pi}\int_\R\int_\R e^{ixr+its}f(\psi^{-1}(r,s))\frac{dr\,ds}{|\psi^{-1}(r,s)_1^2-\psi^{-1}(r,s)_2^2|},
\end{multline*}
which we estimate using the Hausdorff--Young inequality (where $q'$ is the dual exponent of $q$)
$$\norm{\left(|D_x|^{1/6}e^{-t\partial_x^3}u_I\right)\left(|D_x|^{1/6}e^{-t\partial_x^3}v_{I'}\right)}_{L^q_{t,x}}\le C\norm{\frac{f(\psi^{-1}(r,s))}{|\psi^{-1}(r,s)_1^2-\psi^{-1}(r,s)_2^2|}}_{L^{q'}_{r,s}}.
$$
Undoing the change of variables we find
$$\norm{\frac{f(\psi^{-1}(r,s))}{|\psi^{-1}(r,s)_1^2-\psi^{-1}(r,s)_2^2|}}_{L^{q'}_{r,s}}=\left(\int_\R\int_\R\left|\frac{|\eta|^{1/6}|\eta'|^{1/6}\hat{u_I}(\eta)\hat{v_{I'}}(\eta')}{|\eta^2-\eta'^2|}\right|^{q'}3|\eta^2-\eta'^2|\,d\eta\,d\eta'\right)^{1/q'}.
$$
By Lemma \ref{lem:dyadic-intervals}, we have on the support of the last integral
$$\frac{|\eta|^{q'/6}|\eta'|^{q'/6}}{|\eta+\eta'|^{q'-1}|\eta-\eta'|^{q'-1}}\le C|c(I)|^{1-\frac23 q'}|I|^{1-q'}.$$
Estimating
$$\int_\R|\hat{u_I}(\eta)|^{q'}\,d\eta\le|I|^{1-q'/2}\norm{u}_{L^2}^{q'},$$
we arrive at the result.
\end{proof}

\subsection{Proof of Theorem \ref{thm:refined-stri}}

Let $u\in L^2(\R)$. By splitting $u$ as $u=u_>+u_<$ with $\hat{u_>}=\1_{\R_+}\hat{u}$, we may assume that $\supp\hat{u}\subset\R_+$ or $\supp\hat{u}\subset\R_-$. For any dyadic interval $I$, we use the notation
$$\Psi_I:=|D_x|^{1/6}e^{-t\partial_x^3}u_I.$$
Since for any $(\eta,\eta')\in\R^2$ with $\eta\neq\eta'$, there is a unique pair of dyadic intervals $(I,I')$ of maximal length with $I\sim I'$, $\eta\in I$, and $\eta'\in I'$, the identity
$$\forall\eta\neq\eta',\quad 1=\sum_{I\sim I'}\1_I(\eta)\1_{I'}(\eta')$$
induces the decomposition
\begin{equation}\label{eq:dyadic-decomp}
  \left(|D_x|^{1/6}e^{-t\partial_x^3}u\right)^2=\sum_{I\sim I'}\Psi_I\Psi_{I'},
\end{equation}
which we estimate using the following lemma, exploiting some orthogonality between the Fourier supports of each term under the previous sum.

\begin{lemma}\label{lem:orthogonality}
 There exists $C>0$ such that for all $u\in L^2(\R)$ with ${\rm supp}\,\hat{u}\subset\R_-$ or $\R_+$, we have 
 $$\norm{\sum_{I\sim I'}\Psi_I\Psi_{I'}}_{L^3_{t,x}}^{3/2}\le C\sum_{I\sim I'}\norm{\Psi_I\Psi_{I'}}_{L^3_{t,x}}^{3/2}.$$
\end{lemma}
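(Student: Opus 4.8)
The plan is to read off the space--time Fourier support of each product $\Psi_I\Psi_{I'}$, to prove that these supports (and suitable bounded enlargements of them) have bounded overlap, and then to deduce the $L^3_{t,x}$ bound by interpolation; the last step follows the scheme of \cite[App.~A]{Tao-09}, \cite[Prop.~4.24]{KilVis-book} and \cite{FraLieSab-16}. Since the space--time Fourier transform of $\Psi_I$ is the measure carried by the arc $\{(\eta,\eta^3):\eta\in I\}$ with density proportional to $|\eta|^{1/6}\hat u(\eta)$, the space--time Fourier transform of $\Psi_I\Psi_{I'}$ is supported in
$$R_{I,I'}:=\psi(I\times I'),\qquad\psi(\eta,\eta'):=(\eta+\eta',\,\eta^3+\eta'^3),$$
where $\psi$ is exactly the change of variables used in the proof of Lemma \ref{lem:bilinear}, a local diffeomorphism off $\{\eta=\eta'\}$ with Jacobian $3|\eta^2-\eta'^2|$.

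The key point is that $\sum_{I\sim I'}\1_{R_{I,I'}^*}\le C_0$ almost everywhere, where $R_{I,I'}^*$ is a fixed bounded enlargement of $R_{I,I'}$. This is the only place where the hypothesis $\supp\hat u\subset\R_+$ or $\R_-$ is used: it forces $I$ and $I'$ onto the same side of the origin, so that on $R_{I,I'}$ the first coordinate $r=\eta+\eta'$ never vanishes. For a point $(r,s)\in R_{I,I'}$ with $r\neq0$ the relations $\eta+\eta'=r$ and $\eta\eta'=(r^3-s)/(3r)$ determine $\{\eta,\eta'\}$ as the pair of roots of $z^2-rz+(r^3-s)/(3r)$, so $\{\eta,\eta'\}$ is a function of $(r,s)$; for a point of the enlargement this pair is determined up to an error $O(|I|)$, the discriminant staying bounded below because $I\sim I'$ forces $|\eta-\eta'|\ge2|I|$ (Lemma \ref{lem:dyadic-intervals}(4)). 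Since $I\sim I'$ also pins the ratio $|c(I)-c(I')|/|I|$ to the fixed range $[2,8]$, and $|c(I)-c(I')|$ is comparable to $|\eta-\eta'|$, which is essentially determined by $(r,s)$, only $O(1)$ dyadic scales are admissible, and at each of them there are $O(1)$ admissible intervals $I$ and $I'$; this gives the bounded overlap. The same estimate shows that $R_{I,I'}$ lies inside the parallelogram $P_{I,I'}$ centered at $(c(I)+c(I'),\,c(I)^3+c(I')^3)$ and spanned by $|I|(1,3c(I)^2)$ and $|I|(1,3c(I')^2)$, enlarged by an absolute factor; the family $\{P_{I,I'}\}_{I\sim I'}$ still has bounded overlap, and each $P_{I,I'}$ is an affine image of a fixed box.

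To pass from bounded overlap to the $L^3_{t,x}$ inequality, choose smooth Fourier cut-offs $m_{I,I'}$ with $m_{I,I'}\equiv1$ near $R_{I,I'}$ and $\supp m_{I,I'}\subset P_{I,I'}$, of the form $m_{I,I'}=m_0\circ A_{I,I'}$ for a fixed bump $m_0$ and an affine map $A_{I,I'}$ carrying $P_{I,I'}$ to a fixed box; then the convolution kernel of $m_{I,I'}(D_{t,x})$ has $L^1_{t,x}$ norm independent of $(I,I')$ by affine invariance. Consider the linear map $\Theta:(F_{I,I'})_{I\sim I'}\mapsto\sum_{I\sim I'}m_{I,I'}(D_{t,x})F_{I,I'}$. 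Since $m_{I,I'}(D_{t,x})F_{I,I'}$ has Fourier support in $P_{I,I'}$ and the $P_{I,I'}$ have bounded overlap, Plancherel gives $\norm{\Theta(F_{I,I'})}_{L^2_{t,x}}^2\lesssim\sum_{I\sim I'}\norm{F_{I,I'}}_{L^2_{t,x}}^2$, while $\norm{\Theta(F_{I,I'})}_{L^\infty_{t,x}}\lesssim\sum_{I\sim I'}\norm{F_{I,I'}}_{L^\infty_{t,x}}$ by the triangle inequality and the uniform kernel bound. Thus $\Theta$ is bounded from $\ell^2(L^2_{t,x})$ to $L^2_{t,x}$ and from $\ell^1(L^\infty_{t,x})$ to $L^\infty_{t,x}$ (here $\ell^q(X)$ is the space of $X$-valued sequences with norm $(\sum\norm{\cdot}_X^q)^{1/q}$), and complex interpolation at $\theta=1/3$, using $[\ell^2,\ell^1]_{1/3}=\ell^{3/2}$ and $[L^2,L^\infty]_{1/3}=L^3$, yields $\Theta:\ell^{3/2}(L^3_{t,x})\to L^3_{t,x}$. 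Applying this with $F_{I,I'}=\Psi_I\Psi_{I'}$, which is unchanged by $m_{I,I'}(D_{t,x})$ because its Fourier support lies in $R_{I,I'}$, gives $\norm{\sum_{I\sim I'}\Psi_I\Psi_{I'}}_{L^3_{t,x}}\lesssim(\sum_{I\sim I'}\norm{\Psi_I\Psi_{I'}}_{L^3_{t,x}}^{3/2})^{2/3}$, which is the assertion after raising to the power $3/2$.

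The step I expect to be the main obstacle is the bounded-overlap count of the second paragraph: it rests on the arithmetic of the cubic curve --- that $\eta+\eta'$ together with $\eta^3+\eta'^3$ determine $\{\eta,\eta'\}$, provided their sum is nonzero --- and this nonvanishing is precisely why the decomposition \eqref{eq:dyadic-decomp} must be carried out within $\R_+$ or within $\R_-$, i.e. why positive and negative frequencies (the two conjugate points $\xi$ and $-\xi$ obstructing compactness) have to be separated. A secondary technical point is checking that the enlarged parallelograms $P_{I,I'}$ still have bounded overlap, so that the adapted cut-offs in the third paragraph have uniformly bounded kernels; once the geometry is settled, the interpolation is routine and parallels the corresponding argument in \cite{FraLieSab-16}.
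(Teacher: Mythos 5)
Your proof is correct and follows essentially the same route as the paper's: both enclose the space--time Fourier support of $\Psi_I\Psi_{I'}$ in a parallelogram adapted to the pair $(I,I')$, show that fixed dilates of these parallelograms have universally bounded overlap (this being the only place where the restriction to $\R_+$ or $\R_-$ enters), and conclude by the standard almost-orthogonality principle. The differences are cosmetic: the paper carries out the overlap count via explicit inequalities for a parallelogram $R(I+I')$ depending only on $I+I'$ and cites \cite[Lem.~A.9]{KilVis-book} for the final step, whereas you count the overlap by inverting $(\eta,\eta')\mapsto(\eta+\eta',\eta^3+\eta'^3)$ and re-prove the orthogonality lemma by interpolating between the $\ell^2(L^2)\to L^2$ and $\ell^1(L^\infty)\to L^\infty$ bounds.
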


Assuming Lemma \ref{lem:orthogonality}, we finish the proof of Theorem \ref{thm:refined-stri}. We estimate $\norm{\Psi_I\Psi_{I'}}_{L^3_{t,x}}$ in two different ways. First, using Lemma \ref{lem:bilinear} and Lemma \ref{lem:dyadic-intervals} we conclude that for any $q\in[2,3)$, 
\begin{align}
\norm{\Psi_I\Psi_{I'}}_{L^3_{t,x}} &\le \left(|c(I)|^{-\frac13}|I|^{-1}\norm{\Psi_I\Psi_{I'}}_{L^\ii_{t,x}}\right)^{1-\frac q3}\left(|c(I)|^{\frac1q-\frac13}|I|^{\frac3q-1}\norm{\Psi_I\Psi_{I'}}_{L^q_{t,x}}\right)^{\frac q3}\nonumber \\
&\le C\left(|c(I)|^{-\frac16}|I|^{-\frac12}\norm{\Psi_I}_{L^\ii_{t,x}}\right)^{1-\frac q3}\left(|c(I)|^{-\frac16}|I|^{-\frac12}\norm{\Psi_{I'}}_{L^\ii_{t,x}}\right)^{1-\frac q3}\norm{u}_{L^2}^{\frac{2q}{3}}\nonumber\\
&\le C\left(\sup_{I''\in\cD}|c(I'')|^{-\frac16}|I''|^{-\frac12}\norm{\Psi_{I''}}_{L^\ii_{t,x}}\right)^{2-\frac{2q}{3}}\norm{u}_{L^2}^{\frac{2q}{3}} \,.\label{eq:first-L3-estimate}
\end{align}
Secondly, by an elementary estimate, using also Lemma \ref{lem:dyadic-intervals} we find
$$\norm{\Psi_I\Psi_{I'}}_{L^\ii_{t,x}}\le C |c(I)|^{\frac13}\norm{\hat{u_I}}_{L^1}\norm{\hat{u_{I'}}}_{L^1},$$
and then by interpolation with Lemma \ref{lem:bilinear} we infer that
\begin{equation}\label{eq:second-L3-estimate}
  \norm{\Psi_I\Psi_{I'}}_{L^3_{t,x}}\le C|I|^{1-\frac2s}\norm{\hat{u_I}}_{L^s}\norm{\hat{u_{I'}}}_{L^s}
\end{equation}
for some $3/2\leq s<2$. In conclusion, from identity \eqref{eq:dyadic-decomp}, Lemma \ref{lem:orthogonality}, and estimates \eqref{eq:first-L3-estimate} and \eqref{eq:second-L3-estimate} we deduce that for any $r\le3/2$
\begin{align*}
 \norm{|D_x|^{1/6}e^{-t\partial_x^3}u}_{L^6_{t,x}}^3 & \le C\sum_{I\sim I'}\norm{\Psi_I\Psi_{I'}}_{L^3_{t,x}}^{3/2}\\
 &\le C\sup_{I\sim I'}\norm{\Psi_I\Psi_{I'}}_{L^3_{t,x}}^{3/2-r}\sum_{I\sim I'}\norm{\Psi_I\Psi_{I'}}_{L^3_{t,x}}^r\\
 &\le C\!\left(\sup_{I\in\cD}|c(I)|^{-\frac16}|I|^{-\frac12}\norm{\Psi_{I}}_{L^\ii_{t,x}}\!\right)^{\left(2-\frac{2q}{3}\right)\left(\frac32-r\right)}\!\!\norm{u}_{L^2}^{\frac{2q}{3}\left(\frac32-r\right)}\!\sum_{I\in\cD}\left[|I|^{1-\frac2s}\norm{\hat{u_I}}_{L^s}^2\right]^r\!.
\end{align*}
We now use \cite[Lem. A.3]{FraLieSab-16} (which is itself extracted from \cite[App. A]{Tao-09}) with the choices $\nu=2r/s$ and $\mu=2/s$ and obtain that for $r>1$,
$$\sum_{I\in\cD}\left[|I|^{1-\frac2s}\norm{\hat{u_I}}_{L^s}^2\right]^r\le C\norm{u}_{L^2}^{2r}.$$
This completes the proof of Theorem \ref{thm:refined-stri}.

It thus remains to provide the

\begin{proof}[Proof of Lemma \ref{lem:orthogonality}]
Let us explain the strategy of the proof before giving the details. First, without loss of generality, we may assume that ${\rm supp}\,\hat{u}\subset\R_+$, so that any dyadic interval appearing in the following may be assumed to be included in $\R_+$. We will associate to any interval $J\subset\R_+$ a parallelogram $R(J)\subset\R^2$ such that 
\begin{equation}
\label{eq:fouriersuppinclusion}
{\rm supp}\,\cF_{t,x}\left[\Psi_I\Psi_{I'}\right]\subset R(I+I') \,,
\end{equation}
where $\cF_{t,x}$ denotes the space-time Fourier transform. We then define for a parallelogram $R(J)$ and a number $\alpha>0$ an enlarged parallelogram $(1+\alpha)R(J)$. The main point of the proof will be to show that there is a finite, universal constant such that for any pair $(I,I')$ with $I\sim I'$ the number of pairs $(\tilde I,\tilde I')$ with $\tilde I\sim\tilde I'$ and
\begin{equation}
\label{eq:intersection}
(1+\alpha)R(I+I')\cap (1+\alpha)R(\tilde I + \tilde I') \neq \emptyset
\end{equation}
is bounded by this constant. Once this is shown the conclusion of the lemma follows from \cite[Lem. A.9]{KilVis-book}. 

Let us now carry out the details of this argument. We clearly have 
$${\rm supp}\,\cF_{t,x}\left[\Psi_I\Psi_{I'}\right]\subset\{(\eta^3+\eta'^3,\eta+\eta'),\,\eta\in I,\,\eta'\in I'\},$$
and we will include this last set into a parallelogram in the following fashion. Let $(\omega,\xi)\in\R^2$ with $\omega=\eta^3+\eta'^3$, $\xi=\eta+\eta'$ for some $\eta\in I$, $\eta'\in I'$. Define $c=c(I+I')>0$. A quick computation shows that
$$\omega-\frac14\xi^3=\frac34\xi(\eta-\eta')^2,$$
which we combine with the identity
$$\xi^3=(\xi-c)^3+3(\xi-c)^2c+3(\xi-c)c^2+c^3$$
to infer that
$$\omega-\frac34(\xi-c)c^2-\frac14c^3=\frac34\xi(\eta-\eta')^2+\frac14(\xi-c)^2(\xi+2c).$$
Using Lemma \ref{lem:dyadic-intervals}, we deduce that
$$\frac{12}{5}\le\frac{\omega-\frac34(\xi-c)c^2-\frac14c^3}{c|I|^2}\le\frac{292}{5},$$
which means that we have the inclusion \eqref{eq:fouriersuppinclusion} with the parallelogram
$$R(J)=\left\{(\omega,\xi):\,\xi\in J,\ \frac35\le\frac{\omega-\frac34(\xi-c(J))c(J)^2-\frac14c(J)^3}{c(J)|J|^2}\le\frac{73}{5}\right\}.$$

Let $\alpha>0$ and define $(1+\alpha)R(J)$ to be the $(1+\alpha)$-dilate of the parallelogram $R(J)$, that is, the parallelogram with the same center as $R(J)$ but whose linear part is multiplied by $1+\alpha$. An elementary computation shows that
$$(1+\alpha)R(J)=\!\left\{(\omega,\xi):\,\xi\in (1+\alpha) J,\ \frac35-7\alpha\le\frac{\omega-\frac34(\xi-c(J))c(J)^2-\frac14c(J)^3}{c(J)|J|^2}\le\frac{73}{5}+7\alpha\right\}$$
where $(1+\alpha)J$ denotes the interval with the same center as $J$ but with $1+\alpha$ times its length. We shall now show the universal bound on the number of pairs $(\tilde{I},\tilde{I}')$ with $\tilde{I}\sim\tilde{I}'$ satisfying \eqref{eq:intersection} for some given pair $(I,I')$ with $I\sim I'$. Let us note $J=I+I'$ and $\tilde{J}=\tilde{I}+\tilde{I}'$, and let $(\omega,\xi)\in(1+\alpha)R(J)\cap(1+\alpha)R(\tilde{J})$. By the same reasoning as in the proof of Lemma \ref{lem:dyadic-intervals}-(3), we have $\frac{4-\alpha}{5}c\le\xi\le\frac{6+\alpha}{5}c$, where $c$ denotes either $c(J)$ or $c(\tilde{J})$. In particular, we have
\begin{equation}\label{eq:identification-centers}
\frac{4-\alpha}{6+\alpha}c(J)\le c(\tilde{J})\le\frac{6+\alpha}{4-\alpha}c(J).
\end{equation}
Next, from the relation
$$-3(\xi-c)c^2-c^3+\xi^3=(\xi-c)^2(\xi+2c),$$
we deduce that
$$\omega-\frac14\xi^3=\omega-\frac34(\xi-c)c^2-\frac14c^3-\frac14(\xi-c)^2(\xi+2c),$$
again with $c=c(J)$ or $c=c(\tilde{J})$. In particular,
$$\left(\frac25-\frac{561}{80}\alpha\right)cL^2\le\omega-\frac14\xi^3\le\left(\frac{73}{5}+7\alpha\right)cL^2,$$
where $(c,L)=(c(J),|J|)$ or $(c,L)=(c(\tilde{J}),|\tilde{J}|)$. Choosing $\alpha>0$ small enough such that $\alpha<4$, $\frac35-7\alpha>0$, and $\frac25-\frac{561}{80}\alpha>0$, we deduce that there exist universal numbers $a,b>0$ such that $a|J|\le|\tilde{J}|\le b|J|$. This relation together with \eqref{eq:identification-centers} can be satisfied only for a universal, finite number of pairs $(\tilde I,\tilde I')$ with $\tilde{I}\sim\tilde{I}'$.
\end{proof}

\section{Approximate operators}\label{sec:approximate-operators}

In this section, we provide some properties of the Airy--Strichartz map for functions that concentrate around a frequency. Define the family of operators for any $q\ge2$ and $\delta\in\R$,

$$(T_{p,\delta} u)(t,x)=\frac{1}{\sqrt{2\pi}}\int_\R|1+\delta\xi|^{1/p}e^{ix\xi+it(3\xi^2+\delta\xi^3)}\hat{u}(\xi)\,d\xi.$$

\subsection{Basic estimates}

\begin{lemma}\label{lem:approximate-operator} Let $p>4$ and $q$ such that $2/p+1/q=1/2$. 

 \begin{enumerate}
  \item There exists $C>0$ such that for any $\delta\in\R$ we have
  $$\norm{T_{p,\delta}}_{L^2_x(\R)\to L^p_tL^q_x(\R\times\R)}\le C$$
  \item For any $u\in L^2(\R)$ we have 
  $$T_{p,\delta} u \to e^{-3it\partial_x^2}u$$
  as $\delta\to0$, strongly in $L^p_tL^q_x(\R\times\R)$. 
 \end{enumerate}
\end{lemma}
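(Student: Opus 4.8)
The plan is to establish (1) by an $L^2$-based argument combined with interpolation, and (2) by a density argument using (1) as the uniform bound. For part (1), the key observation is that $T_{p,\delta}$ is a Fourier-extension-type operator along the curve $\xi\mapsto(\xi, 3\xi^2+\delta\xi^3)$, and that the Airy--Strichartz inequality \eqref{eq:A-S} is exactly scale-invariant. First I would note that if we conjugate by a rescaling $u\mapsto u(\delta\cdot)$ (equivalently $\hat u\mapsto \delta^{-1}\hat u(\delta^{-1}\cdot)$), then $T_{p,\delta}u$ becomes, up to a change of variables in $(t,x)$ and an overall $\delta$-power that cancels between numerator and denominator, exactly $\Psi_p[\tilde u]$ translated in frequency by $1/\delta$ — more precisely the computation already performed in the proof of Lemma \ref{lem:energy-two-bubbles} shows $\epsilon^{1/2}\|2\,\mathrm{Re}\,e^{i(\cdot)}T_{q,\epsilon}\chi\|_{L^p_tL^q_x}$ equals an honest Airy--Strichartz norm. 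So $\|T_{p,\delta}u\|_{L^p_tL^q_x}$ is controlled by $\cA_p^{1/p}\|u\|_{L^2}$ times a constant independent of $\delta$, coming from the Airy--Strichartz inequality itself (which holds for all exponents in \eqref{eq:constraint-exponents}). Alternatively, and perhaps more cleanly, one can use the $TT^*$ method directly: $T_{p,\delta}T_{p,\delta}^*$ has kernel $\frac{1}{2\pi}\int_\R |1+\delta\xi|^{2/p}e^{i(x-x')\xi+i(t-t')(3\xi^2+\delta\xi^3)}\,d\xi$, and one estimates this by stationary phase / van der Corput uniformly in $\delta$; the weight $|1+\delta\xi|^{2/p}$ is harmless after the same rescaling. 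Either route gives the uniform bound, and I expect the $TT^*$/rescaling-to-Airy route to be the one the authors take since the machinery is already set up.

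For part (2), I would argue by density. The operator norms $\|T_{p,\delta}\|_{L^2\to L^p_tL^q_x}$ are uniformly bounded by (1), and $\|e^{-3it\partial_x^2}\|_{L^2\to L^p_tL^q_x}=\cS_p^{1/p}<\infty$ by the Strichartz inequality. Hence it suffices to prove $T_{p,\delta}u\to e^{-3it\partial_x^2}u$ in $L^p_tL^q_x$ for $u$ in a dense subclass, say $u$ with $\hat u\in C_c^\infty(\R)$. For such $u$, the frequency variable ranges over a fixed compact set, so $|1+\delta\xi|^{1/p}\to 1$ uniformly and $3\xi^2+\delta\xi^3\to 3\xi^2$ uniformly on $\mathrm{supp}\,\hat u$; therefore $T_{p,\delta}u(t,x)\to (e^{-3it\partial_x^2}u)(x)$ uniformly on compact sets in $(t,x)$, and in fact one gets a quantitative rate. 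The issue is to upgrade this local convergence to convergence in $L^p_tL^q_x(\R\times\R)$, i.e.\ to control the tails. For fixed compactly-supported $\hat u$ both $T_{p,\delta}u$ and $e^{-3it\partial_x^2}u$ are Schwartz-class in $x$ with uniform-in-$\delta$ decay, and in $t$ one has the usual dispersive decay $\|T_{p,\delta}u(t,\cdot)\|_{L^q_x}\lesssim |t|^{-1/2+1/q}\|\hat u\|_{L^{q'}}$ (again by stationary phase, uniformly in small $\delta$ since the phase $3\xi^2+\delta\xi^3$ has nondegenerate critical point on the compact support); since $p(1/2-1/q)>1$ exactly when $p>4$, this decay is $L^p_t$-integrable at infinity, so dominated convergence finishes the argument. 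Then a standard $3\varepsilon$-argument extends (2) from the dense class to all of $L^2(\R)$.

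The main obstacle will be making the uniform-in-$\delta$ dispersive/stationary-phase estimates rigorous — in particular verifying that the weight $|1+\delta\xi|^{1/p}$ and the perturbation $\delta\xi^3$ of the phase do not spoil the van der Corput bounds, uniformly for $\delta$ in a neighborhood of $0$ (or all $\delta\in\R$ for part (1)). For part (1) this is really just the content of the Kenig--Ponce--Vega Airy--Strichartz inequality after rescaling, so the honest work is recognizing the rescaling; for part (2) one only needs $\delta$ small, the frequency support is compact, and the weight and phase-perturbation are then genuinely negligible, so the tail estimates go through by an elementary stationary-phase bound with constants uniform in $\delta$. I do not anticipate conceptual difficulties beyond bookkeeping.
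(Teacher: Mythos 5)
Your proposal matches the paper's proof in both parts: part (1) is obtained exactly by undoing the scaling from Lemma \ref{lem:energy-two-bubbles} to reduce to the Airy--Strichartz inequality (the first route you describe), and part (2) is a density argument combined with a uniform-in-$\delta$ stationary-phase bound (the paper proves the pointwise estimate $|T_{p,\delta}u(t,x)|\le C(t^2+x^2)^{-1/4}$, equivalent in substance to your dispersive-decay formulation) followed by dominated convergence. The only cosmetic slip is your claim that $p(1/2-1/q)>1$ ``exactly when $p>4$'': under $2/p+1/q=1/2$ this quantity equals $2$ identically, while $p>4$ is what makes $q$ finite; this does not affect the argument.
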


\begin{proof}
  The first item follows from the Airy--Strichartz inequality \eqref{eq:A-S} by undoing the scaling as in the proof of Lemma \ref{lem:energy-two-bubbles}, which shows that
  $$\norm{T_{p,\delta}}_{L^2_x(\R)\to L^p_tL^q_x(\R\times\R)}=\norm{|D_x|^{1/p}e^{-t\partial_x^3}}_{L^2_x(\R)\to L^p_tL^q_x(\R\times\R)}.$$
  For the second item, using the first item it is enough to prove it for $u$ such that $\hat{u}\in C^\ii_0(\R)$. As in \cite[Proof of Prop. 4.1]{FraLieSab-16}, it is enough to prove the pointwise estimate
  $$|T_{p,\delta} u(t,x)|\le C(t^2+x^2)^{-1/4},$$
  for some $C>0$ independent of $\delta$ and for $\delta$ small enough (both depending on $u$ though). We may write
  $$T_{p,\delta} u(t,x)=\int_\R e^{i\lambda\Phi(\xi)}a(\xi)\,d\xi$$
  with $\lambda=(t^2+x^2)^{1/2}$, $\Phi(\xi)=\omega_1\xi+\omega_2(3\xi^2+\delta\xi^3)$, $(\omega_1,\omega_2)=(x/\lambda,t/\lambda)\in\Sph^2$, $a(\xi)=|1+\delta\xi|^{1/p}\hat{u}(\xi)$. We apply stationary phase estimates. Let $R>0$ such that $\supp\,\hat{u}\subset[-R,R]$. If $\delta\le 1/(2R)$, the function $a$ is $C^\ii$ on $\R$ with all its derivatives uniformly bounded in $\delta$. We have 
  $$\Phi'(\xi)=\omega_1+\omega_2(6\xi+3\delta\xi^2).$$
  Assume $|\omega_2|\le b$, then 
  $$|\Phi'(\xi)|\ge\sqrt{1-b^2}-CbR,$$
  so that for $b=b(R)>0$ small enough, we have $|\Phi'(\xi)|\ge1/2$. Furthermore, all higher derivatives of $\Phi$ are uniformly bounded in $\delta$ due to the $\xi$-localization. Hence, by integration by parts, we get the desired estimate (and even much faster decay) in the region $|\omega_2|\le b$. Let us now consider the case $|\omega_2|>b$. In this case, $\Phi'$ has at most 3 critical points, but 
  $$\Phi''(\xi)=6\omega_2(1+\delta\xi)$$
  so that $|\Phi''(\xi)|\ge3b$, in which case we may apply stationary phase results (again, all higher order derivatives of $\Phi$ are uniformly bounded in $\delta$). This concludes the proof of the second item.
  \end{proof}

\subsection{Local smoothing}

The following result in crucial in order to establish the a.e. convergence (proven in the next subsection) that is used in order to split the $L^p_tL^q_x$-norm in the proof of Theorem \ref{thm:MMM-2}. Interestingly, it needs some assumption about the Fourier support, which is reminiscent of the `resonance' between positive and negative frequencies that is responsible for the presence of the constant $a_p$.

\begin{lemma}\label{lem:local-smoothing-approx}
 Let $p>0$. For any $a\in L^1(\R)$, there exists $C>0$ such that for any $\delta\in\R$ and any $u\in L^2(\R)$ with Fourier transform supported in $\{1+\delta\xi\ge0\}$ we have 
 $$\int_\R\int_\R a(x)\left|f_\delta(-iD_x)T_{p,\delta} u(t,x)\right|^2\,dx\,dt\le C\norm{u}_{L^2}^2,$$
 where
 $$f_\delta(\xi):=\frac{|\xi|^{1/2}}{|1+\delta\xi|^{1/p}}.$$
\end{lemma}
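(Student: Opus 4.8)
The plan is to exploit the very definition of $f_\delta$: since the Fourier multiplier symbol of $T_{p,\delta}$ is $|1+\delta\xi|^{1/p}$ and $f_\delta(\xi)=|\xi|^{1/2}|1+\delta\xi|^{-1/p}$, the two factors cancel, so that, writing $\varphi_\delta(\xi):=3\xi^2+\delta\xi^3$,
$$f_\delta(-iD_x)T_{p,\delta}u(t,x)=\frac{1}{\sqrt{2\pi}}\int_\R|\xi|^{1/2}e^{ix\xi+it\varphi_\delta(\xi)}\hat u(\xi)\,d\xi=:F_\delta(t,x).$$
Thus the asserted bound is nothing but a local smoothing estimate (the gain of half a derivative) for the ``dispersion'' $\varphi_\delta$, localized by the $L^1$ weight $a$. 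We may assume $a\ge 0$ (otherwise replace $a$ by $|a|$, which only enlarges the left side) and, by a routine density argument, that $\hat u$ is smooth with compact support, so that all integrals below converge absolutely.

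The structural input is the monotonicity of $\varphi_\delta$ on each half of the Fourier support. One computes $\varphi_\delta'(\xi)=3\xi(2+\delta\xi)$, and on $\{1+\delta\xi\ge 0\}$ one has $2+\delta\xi\ge 1$; hence $\varphi_\delta'>0$ for $\xi>0$ and $\varphi_\delta'<0$ for $\xi<0$ on the support. I would therefore split $\hat u=\hat u\,\1_{(0,\infty)}+\hat u\,\1_{(-\infty,0)}=:v_++v_-$, correspondingly $F_\delta=F_{\delta,+}+F_{\delta,-}$, and use $|F_\delta|^2\le 2|F_{\delta,+}|^2+2|F_{\delta,-}|^2$. (The critical point of $\varphi_\delta$ at $\xi=0$ is what forces such a splitting, but it is harmless because $|\xi|^{1/2}$ vanishes there.)

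For $F_{\delta,+}$ and a fixed $x$, I would apply Plancherel in the $t$ variable together with the substitution $\tau=\varphi_\delta(\xi)$ — a strictly increasing $C^1$ diffeomorphism of $(0,\infty)\cap\{1+\delta\xi\ge0\}$ onto an interval — to obtain
$$\int_\R|F_{\delta,+}(t,x)|^2\,dt=\int_{(0,\infty)\cap\{1+\delta\xi\ge0\}}\frac{|\xi|\,|\hat u(\xi)|^2}{\varphi_\delta'(\xi)}\,d\xi=\frac13\int_0^\infty\frac{|\hat u(\xi)|^2}{2+\delta\xi}\,d\xi\le\frac13\norm{v_+}_{L^2}^2,$$
which, crucially, is independent of $x$ since only $|\hat u|$ enters. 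Multiplying by $a(x)\ge0$ and integrating in $x$ gives $\iint a\,|F_{\delta,+}|^2\le\tfrac13\norm{a}_{L^1}\norm{v_+}_{L^2}^2$, and the same argument (now with $\varphi_\delta'<0$) yields the analogous bound for $F_{\delta,-}$. Summing and using $\norm{v_+}_{L^2}^2+\norm{v_-}_{L^2}^2=\norm{\hat u}_{L^2}^2=\norm{u}_{L^2}^2$ gives the lemma with $C=\tfrac23\norm{a}_{L^1}$.

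There is no deep analytic difficulty; the only thing to get right is the bookkeeping for $\varphi_\delta$: checking that on the Fourier support $\{1+\delta\xi\ge0\}$ the sole critical point of $\varphi_\delta$ is the simple zero at $\xi=0$ (in particular $\varphi_\delta'(-1/\delta)=-3/\delta\ne0$, so there is no degeneracy at the endpoint of the support), and that near $\xi=0$ the ratio $|\xi|/\varphi_\delta'(\xi)=\bigl(3(2+\delta\xi)\bigr)^{-1}$ stays bounded — this last fact is exactly where the precise power $|\xi|^{1/2}=f_\delta(\xi)|1+\delta\xi|^{1/p}$ is used. The Fourier-support hypothesis is essential here: without it $\varphi_\delta$ would have a second critical point at $\xi=-2/\delta$, near which the $t$-integral above would diverge; this is the analytic trace of the resonance between positive and negative frequencies.
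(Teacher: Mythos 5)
Your proof is correct, but it takes a genuinely different route from the paper's. The paper writes the left side as a bilinear form in $\hat u$ with kernel $\hat a(\xi'-\xi)\,|\xi|^{1/2}|\xi'|^{1/2}\,\delta(3\xi^2+\delta\xi^3-3\xi'^2-\delta\xi'^3)$ and applies Schur's test; the delta function then forces it to control the ratio $|\xi|/|\xi'|$ over all pairs $\xi\neq\xi'$ lying on the same level set of the cubic $g(x)=3x^2+x^3$ within $\{x\ge -1\}$, which requires a small but careful case analysis of the roots of $g(\cdot)=g(\eta)$ (this is where the paper uses that $|y_\eta|\ge |\eta|/2$). You sidestep all of that: by splitting $\hat u$ into positive and negative frequencies, the phase $\varphi_\delta(\xi)=3\xi^2+\delta\xi^3$ becomes strictly monotone on each piece of the support (since $2+\delta\xi\ge 1$ there), so Plancherel in $t$ plus the substitution $\tau=\varphi_\delta(\xi)$ gives the diagonal identity $\int_\R|F_{\delta,\pm}(t,x)|^2\,dt=\tfrac13\int\frac{|\hat u|^2}{2+\delta\xi}\,d\xi$ directly, and the crude bound $|F_{\delta,+}+F_{\delta,-}|^2\le 2|F_{\delta,+}|^2+2|F_{\delta,-}|^2$ discards exactly the cross terms that the paper's Schur argument has to estimate. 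The price is a factor of $2$ in the constant (irrelevant here); the gain is that the only structural fact you need is $\varphi_\delta'(\xi)=3\xi(2+\delta\xi)$ with $2+\delta\xi\ge 1$ on the Fourier support, and both proofs use the support hypothesis in the same essential way, namely to exclude the second critical point $\xi=-2/\delta$ of the phase. Your identification of where the exponent $1/2$ and the weight $|1+\delta\xi|^{-1/p}$ in $f_\delta$ are used is also exactly right.
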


\begin{proof}
In Fourier variables, the integral can be written as 
$$\int_\R\int_\R\hat{a}(\xi'-\xi)f_\delta(\xi)f_\delta(\xi')|1+\delta\xi|^{1/p}|1+\delta\xi'|^{1/p}\hat{u}(\xi)\bar{\hat{u}(\xi')}\delta(3\xi^2+\delta\xi^3-3\xi'^2-\delta\xi'^3)\,d\xi\,d\xi'.$$
 As in the proof of Lemma 4.4 in \cite{FraLieSab-16}, Schur's test implies that it is enough to bound
 \begin{align*}
    \sup_{\substack{\xi:\,1+\delta\xi\ge0 \\ \delta\in\R}}\int_\R|\hat{a}(\xi'-\xi)| & f_\delta(\xi)f_\delta(\xi') |1+\delta\xi|^{1/p}|1+\delta\xi'|^{1/p}\delta(3\xi^2+\delta\xi^3-3\xi'^2-\delta\xi'^3)\,d\xi' \\
    &\le\norm{\hat{a}}_{L^\ii}
    \sup_{\substack{\xi:\,1+\delta\xi\ge0 \\ \delta\in\R}}\max_{\substack{\xi'\neq0:\,1+\delta\xi'\ge0 \\ 3\xi'^2+\delta\xi'^3=3\xi^2+\delta\xi^3}}\frac{|1+\delta\xi|^{1/p}f_\delta(\xi)|1+\delta\xi'|^{1/p}f_\delta(\xi')}{|\xi'||1+\delta\xi'/2|}\\
    &=\norm{\hat{a}}_{L^\ii}\sup_{\substack{\xi:\,1+\delta\xi\ge0 \\ \delta\in\R}}\max_{\substack{\xi'\neq0:\,1+\delta\xi'\ge0 \\ 3\xi'^2+\delta\xi'^3=3\xi^2+\delta\xi^3}}\frac{|\xi|^{1/2}}{|\xi'|^{1/2}|1+\delta\xi'/2|}.
 \end{align*}
 Here, the max is due to the fact that the equation $3\xi'^2+\delta\xi'^3=c$ may have several (at most three) solutions. Because of the constraint $\delta\xi'\ge-1$, we have $1+\delta\xi'/2\ge1/2$, hence we only have to care about the quotient $(|\xi|/|\xi'|)^{1/2}$. When $\delta=0$, the sup is clearly $1$. When $\delta\neq0$, defining $\eta=\delta\xi\ge-1$ and $\eta'=\delta\xi'\ge-1$, we see that the sup is independent of $\delta$ and we have to show that
 $$\sup_{\substack{\eta,\eta':\,\eta,\eta'\ge-1\,\eta'\neq0 \\ 3\eta'^2+\eta'^3=3\eta^2+\eta^3}}\frac{|\eta|}{|\eta'|}<\ii.$$
 Defining $g(x)=3x^2+x^3$, it is elementary to show that $g^{-1}([0,2])\cap[-1,+\ii)=[-1,a]$ for some $a\in(1/2,1)$, and that $g$ is increasing on $(a,\ii)$; so that in the previous sup we may assume $\eta\in[-1,a]$, and $\eta\neq0$ since $\eta'\neq0$. For all $\eta\in[-1,a]\setminus\{0\}$, we have $g^{-1}(\{g(\eta)\})=\{\eta,y_\eta\}$ with $\eta y_\eta<0$. Thus, the above supremum equals
 $$\sup_{\eta\in[-1,a]\setminus\{0\}}\max\left\{1,\frac{|\eta|}{|y_\eta|}\right\}.$$
 In the case $\eta>0$, since $g$ is decreasing on $[-1,0]$ and $g(-\eta)=3\eta^2-\eta^3<3\eta^2+\eta^3=g(\eta)$, we deduce $y_\eta<-\eta$, thus $|y_\eta|>|\eta|$. In the case $\eta<0$, we have $g(-\eta/2)=(3/4)\eta^2-(1/8)\eta^3<3\eta^2+\eta^3=g(\eta)$ since $\eta\ge-1$ and hence we have $y_\eta\ge-\eta/2$ which also implies $|\eta'|\ge(1/2)|\eta|$.
\end{proof}

\subsection{Local convergence}

Here is the almost everywhere convergence result that we used in the proof of Theorem \ref{thm:MMM-2} in order to apply the generalized Br\'ezis--Lieb lemma. Again, the Fourier support condition is crucial.

\begin{lemma}\label{lem:local-convergence}
 Let $p\ge2$. Let $\delta_n\to0$ and $(u_n)\subset L^2(\R)$ a sequence converging weakly to zero in $L^2(\R)$, whose Fourier transform is supported in $\{1+\delta_n\xi\ge0\}$. Then $T_{p,\delta_n}u_n\to0$ in $L^2_{{\rm loc},t,x}(\R\times\R)$, and hence almost everywhere on $\R\times\R$ up to a subsequence.
\end{lemma}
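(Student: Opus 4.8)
The plan is to derive the local $L^2_{t,x}$-convergence from the local smoothing bound of Lemma \ref{lem:local-smoothing-approx} together with the weak convergence $u_n\rightharpoonup 0$. First I would fix a nonnegative $a\in C_c^\infty(\R)$ (say a bump supported on a fixed ball $B\subset\R_x$) and a nonnegative $b\in C_c^\infty(\R_t)$, and reduce to showing $\int b(t)\int a(x)|T_{p,\delta_n}u_n(t,x)|^2\,dx\,dt\to 0$, since such products are dense enough in the local topology. The key device is the operator identity $T_{p,\delta_n}u_n = \left(f_{\delta_n}(-iD_x)\right)^{-1}\bigl(f_{\delta_n}(-iD_x)T_{p,\delta_n}u_n\bigr)$, valid because on the Fourier support $\{1+\delta_n\xi\ge 0\}$ the symbol $f_{\delta_n}(\xi)=|\xi|^{1/2}|1+\delta_n\xi|^{-1/p}$ is (away from $\xi=0$) bounded below; more usefully, I would instead write, for $u_n$ with the stated Fourier support,
\begin{equation*}
T_{p,\delta_n}u_n(t,x)=\frac{1}{\sqrt{2\pi}}\int_\R \frac{|1+\delta_n\xi|^{1/p}}{|\xi|^{1/2}}\, e^{ix\xi+it(3\xi^2+\delta_n\xi^3)}\,|\xi|^{1/2}\hat u_n(\xi)\,d\xi,
\end{equation*}
i.e. $T_{p,\delta_n}u_n = g_{\delta_n}(-iD_x)\,S_{\delta_n}v_n$ where $v_n:=|D_x|^{1/2}\langle\text{cutoff}\rangle u_n$, but this runs into the singularity at $\xi=0$. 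So the cleaner route: Lemma \ref{lem:local-smoothing-approx} gives a uniform bound $\int\!\int a(x)|F_n(t,x)|^2\,dx\,dt\le C\|u_n\|_{L^2}^2$ with $F_n:=f_{\delta_n}(-iD_x)T_{p,\delta_n}u_n$, and one checks $\widehat{F_n}(t,\cdot) = f_{\delta_n}(\xi)|1+\delta_n\xi|^{1/p}e^{it(3\xi^2+\delta_n\xi^3)}\hat u_n(\xi)=|\xi|^{1/2}|1+\delta_n\xi|^{2/p}e^{it(\cdots)}\hat u_n(\xi)$.

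Next I would exploit weak convergence: for each fixed $(t,x)$, $T_{p,\delta_n}u_n(t,x)=\langle u_n, \phi_{t,x,n}\rangle$ where $\phi_{t,x,n}$ has Fourier transform $\xi\mapsto (2\pi)^{-1/2}|1+\delta_n\xi|^{1/p}e^{-ix\xi-it(3\xi^2+\delta_n\xi^3)}$; these are not in $L^2$ because of the lack of decay, which is precisely why a naive ``weak convergence kills it pointwise'' argument fails and why the local smoothing estimate is needed. The standard remedy (as in \cite[Proof of Lem. 4.6]{FraLieSab-16}) is a two-step compactness argument: truncate in frequency. Write $u_n = P_{\le R}u_n + P_{>R}u_n$ where $P_{\le R}$ is a smooth Littlewood–Paley cutoff to $\{|\xi|\le R\}$ (also cutting out a neighborhood of $\xi=0$ if needed, to $\{r\le|\xi|\le R\}$). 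On the low-frequency piece, $T_{p,\delta_n}P_{\le R}u_n$ restricted to the fixed compact set $\text{supp}\,b\times\text{supp}\,a$ is given by testing $u_n$ against an $L^2_\xi$-compact family of functions (the cutoff $\chi_{r,R}(\xi)$ times a bounded oscillatory factor over the compact $(t,x)$-region is precompact in $L^2$ by Arzelà–Ascoli / dominated convergence in $\xi$), so $P_{\le R}u_n\rightharpoonup 0$ forces $T_{p,\delta_n}P_{\le R}u_n\to 0$ in $L^2(b\,dt\,a\,dx)$. On the high-frequency piece $|\xi|\ge R$, the local smoothing bound gives $\int\!\int a(x)|f_{\delta_n}(-iD_x)T_{p,\delta_n}P_{>R}u_n|^2 \lesssim \|P_{>R}u_n\|_{L^2}^2\le \|u_n\|_{L^2}^2$, uniformly in $n$; since on $|\xi|\ge R$ one has $f_{\delta_n}(\xi)^{-1}=|\xi|^{-1/2}|1+\delta_n\xi|^{1/p}$, and on the relevant Fourier support $|1+\delta_n\xi|\le$ (const)$\,|\xi|\cdot|\delta_n|+1$... here one must be careful, but morally $f_{\delta_n}(\xi)\gtrsim |\xi|^{1/2-1/p}$ is large for large $|\xi|$, so dividing by it gains smallness, giving $\|T_{p,\delta_n}P_{>R}u_n\|_{L^2(a\,dx\,dt)}\lesssim R^{-(1/2-1/p)}\|u_n\|_{L^2}$ uniformly; more precisely on $\{1+\delta_n\xi\ge0\}$ with $|\delta_n\xi|$ bounded this is a genuine gain. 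Choosing $R$ large makes the high-frequency contribution uniformly small, and then $n\to\infty$ kills the low-frequency contribution.

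The main obstacle I anticipate is controlling the high-frequency piece uniformly when $\delta_n\xi$ is \emph{not} bounded — i.e. on the part of the Fourier support where $\xi\sim -1/\delta_n$ is huge. There, $f_{\delta_n}(\xi)=|\xi|^{1/2}|1+\delta_n\xi|^{-1/p}$ can fail to be large (indeed $|1+\delta_n\xi|$ can be small), so dividing by it does \emph{not} obviously gain decay, and one cannot simply truncate at a fixed $R$ independent of $n$. The resolution should mirror \cite{FraLieSab-16}: split instead into $\{|\delta_n\xi|\le M\}$ and $\{|\delta_n\xi|>M\}$; on the latter region $|\xi|>M/|\delta_n|\to\infty$, and one re-examines the local smoothing integrand directly, or uses that the measure of the ``bad'' set in $\xi$ shrinks, or uses a crude Cauchy–Schwarz together with the explicit form of the dispersion relation $3\xi^2+\delta_n\xi^3$ (whose derivative $6\xi+3\delta_n\xi^2 = 3\xi(2+\delta_n\xi)$ is large there, giving extra non-stationary-phase decay in $t$ after testing against the fixed $b\in C_c^\infty$). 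I would carry this out by the same route as Lemma 4.6 in \cite{FraLieSab-16}, and expect the argument to go through verbatim once the Fourier-support hypothesis $1+\delta_n\xi\ge 0$ is used to keep $1+\delta_n\xi/2\ge 1/2$, exactly as in the proof of Lemma \ref{lem:local-smoothing-approx}. Finally, convergence in $L^2_{\rm loc}$ yields a.e. convergence along a subsequence by the standard extraction.
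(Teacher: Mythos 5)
Your overall strategy is the same as the paper's: truncate in frequency at a level $\Lambda$, kill the low frequencies by weak convergence (testing $u_n$ against a strongly convergent family of $L^2$ functions, uniformly over the compact $(t,x)$-region, plus dominated convergence), and control the high frequencies by combining the local smoothing bound of Lemma \ref{lem:local-smoothing-approx} with the inverse multiplier $f_{\delta_n}(-iD_x)^{-1}$. That is exactly the paper's proof, and your low-frequency step is complete.

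The gap is in the high-frequency step, which you explicitly leave unresolved by declaring the region $\xi\sim -1/\delta_n$ to be ``the main obstacle'' and deferring to an analogy with \cite{FraLieSab-16}. Your diagnosis of that region is backwards: where $|1+\delta_n\xi|$ is small you have $|1+\delta_n\xi|^{-1/p}>1$, so $f_{\delta_n}(\xi)=|\xi|^{1/2}\,|1+\delta_n\xi|^{-1/p}$ is \emph{very large} there (both factors exceed $1$), and dividing by it gains a great deal. In fact no case distinction is needed at all: on the support $\{1+\delta_n\xi\ge0\}$ one has the uniform bound
\begin{equation*}
f_{\delta_n}(\xi)^{-1}=\frac{|1+\delta_n\xi|^{1/p}}{|\xi|^{1/2}}
\le\left(\frac{1+|\delta_n|\,|\xi|}{|\xi|^{p/2}}\right)^{1/p}
=\left(\frac{1}{|\xi|^{p/2}}+\frac{|\delta_n|}{|\xi|^{p/2-1}}\right)^{1/p},
\end{equation*}
and since $p\ge2$ this supremum over $\{|\xi|\ge\Lambda\}$ is at most $\bigl(\Lambda^{-p/2}+|\delta_n|\bigr)^{1/p}$, which is made arbitrarily small by first taking $\Lambda$ large and then $n$ large (using $\delta_n\to0$). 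So the truncation level $\Lambda$ can indeed be chosen independently of $n$, and none of the extra machinery you propose (splitting at $|\delta_n\xi|=M$, non-stationary phase in $t$) is needed. With this one-line estimate inserted, your proof coincides with the paper's. A minor additional remark: there is no need to excise a neighborhood of $\xi=0$ in the low-frequency piece, since the singular multiplier $f_{\delta_n}^{-1}$ is only ever applied to $P_\Lambda^\perp u_n$.
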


\begin{proof}
 Let $K\subset\R\times\R$ a bounded set and $a>0$ a Schwartz function. We have 
 $$\1_KT_{p,\delta_n}u_n=\1_KT_{p,\delta_n}P_\Lambda u_n+\1_KT_{p,\delta_n}P_\Lambda^\perp u_n,$$
 where $P_\Lambda$ is the Fourier multiplier by $\1_{\{|\xi|\le\Lambda\}}$ for some $\Lambda>0$ and $P_\Lambda^\perp=1-P_\Lambda$. We estimate the second term with the local smoothing estimate of Lemma \ref{lem:local-smoothing-approx}:
 \begin{align*}
  \norm{\1_KT_{p,\delta_n}P_\Lambda^\perp u_n}_{L^2_{t,x}} &\le \sup_K 1/a\norm{aT_{p,\delta_n}f_{\delta_n}(-iD_x)}_{L^2_x\to L^2_{t,x}}\norm{f_{\delta_n}(-iD_x)^{-1}P_\Lambda^\perp}_{L^2_x\to L^2_x}\norm{u_n}_{L^2_x} \\
  &\le C_K\sup_{|\xi|\ge\Lambda}\left(\frac{1}{|\xi|^{p/2}}+\frac{|\delta_n|}{|\xi|^ {p/2-1}}\right)^{1/p},
 \end{align*}
 which can be made less than $\epsilon>0$, for any given $\epsilon>0$, provided that $n$ and $\Lambda$ are large enough (depending only on $\epsilon$). For such a fixed $\Lambda$, we now claim that $\1_KT_{p,\delta_n}P_\Lambda u_n\to0$ strongly in $L^2(\R)$ as $n\to\ii$. Indeed, for any $(t,x)\in\R\times\R$, 
 $$\1_KT_{p,\delta_n}P_\Lambda u_n(t,x)=\1_K(t,x)\int_\R|1+\delta_n\xi|^{1/p}\1(|\xi|\le\Lambda)e^{ix\xi+it(3\xi^2+\delta_n\xi^3)}\hat{u_n}(\xi)\,d\xi\to0$$
 since $u_n\to0$ weakly in $L^2(\R)$ and the function $\xi\mapsto|1+\delta_n\xi|^{1/p}\1(|\xi|\le\Lambda)e^{ix\xi+it(3\xi^2+\delta_n\xi^3)}$ converges strongly in $L^2(\R)$ as $n\to\ii$ (since $(\delta_n)$ converges). Furthermore, we have 
 $$|\1_KT_{p,\delta_n}P_\Lambda u_n(t,x)|\le\1_K(t,x)\Lambda(1+|\delta_n|\Lambda)^{1/p}\norm{u_n}_{L^2_x}\le C\1_K(t,x),$$
 hence the result follows by dominated convergence.
\end{proof}

\appendix

\section{A generalized Br\'ezis--Lieb lemma for mixed Lebesgue spaces}\label{app:BL}

Let us review some basics. We assume that $(X,dx)$ and $(Y,dy)$ are measure spaces and consider a sequence $(f_n)$ of non-negative measurable functions on $X\times Y$ which converges almost everywhere to some function $f$. Moreover, we fix an exponent $r>0$. Our first remark is that the monotone convergence theorem remains true, in the sense that, if for each $n$ one has $f_{n+1}\geq f_n$ almost everywhere, then
$$
\lim_{n\to\infty} \int_Y \left( \int_X f_n\,dx \right)^r dy = \int_Y \left( \int_X f\,dx \right)^r dy \,.
$$
To see this, we first apply for almost every fixed $y\in Y$ the usual monotone convergence theorem in $X$ to see that $g_n := \left( \int_X f_n\,dx \right)^r$ converges to $g := \left( \int_X f\,dx \right)^r$. Indeed, by Fubini's theorem, for a.e. $y\in Y$, $f_n(\cdot,y)$ converges to $f(\cdot,y)$ a.e. on $X$. Then, we apply the monotone convergence theorem in $Y$ to $(g_n)$ and we obtain the claim.

Our second remark is that Fatou's lemma remains true, in the sense that
$$
\liminf_{n\to\infty} \int_Y \left( \int_X f_n\,dx \right)^r dy \geq \int_Y \left( \int_X f\,dx \right)^r dy
$$
This follows, as usual, by applying the monotone convergence theorem to $F_n:=\inf_{m\geq n} f_m$.

Our third remark is that the dominated convergence theorem remains true, in the sense that, if $f_n\leq F$ with $\int_Y \left( \int_X F \right)^r dy <\infty$, then
$$
\lim_{n\to\infty} \int_Y \left( \int_X f_n\,dx \right)^r dy = \int_Y \left( \int_X f\,dx \right)^r dy \,.
$$
To see it, just apply the usual dominated convergence theorem, first to the sequence $f_n(\cdot,y)$ for a.e. $y\in Y$, and then to the sequence $(\int_X f_n\,dx)^r$.

In mixed Lebesgue space, we have the following version of the triangle inequality.

\begin{lemma}\label{lem:triangle-mixed}
 Let $(X,dx)$ and $(Y,dy)$ be measure space, let $0<p,q<\ii$, and let $f,g\in L^p_x L^q_y(X\times Y)$. Then, we have 
 $$\norm{f+g}_{L^p_xL^q_y}^\beta\le\norm{f}_{L^p_xL^q_y}^\beta+\norm{g}_{L^p_xL^q_y}^\beta,$$
 where $\beta=\beta(p,q)=\min(p,q,1)$.
\end{lemma}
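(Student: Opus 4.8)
The plan is to deduce the inequality from two applications of the elementary quasi-triangle inequality in a single (unmixed) Lebesgue space: for any measure space and any $0<r<\infty$,
\[
\norm{\phi+\psi}_{L^r}^{\min(r,1)}\le\norm{\phi}_{L^r}^{\min(r,1)}+\norm{\psi}_{L^r}^{\min(r,1)},
\]
which for $r\ge1$ is Minkowski's inequality and for $0<r<1$ follows by integrating the pointwise bound $|\phi+\psi|^r\le|\phi|^r+|\psi|^r$, valid since $t\mapsto t^r$ is subadditive on $[0,\infty)$. Together with the obvious monotonicity of the $L^r$-``norm'' (if $0\le\phi\le\psi$ then $\norm{\phi}_{L^r}\le\norm{\psi}_{L^r}$, for every $r>0$) and the homogeneity relation $\norm{\phi}_{L^r}^{s}=\norm{|\phi|^s}_{L^{r/s}}$, this is all that will be needed.

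First I would apply the quasi-triangle inequality with $r=q$ in the variable $y$, for each fixed $x$. Setting $\gamma:=\min(q,1)$ and writing $F(x):=\norm{f(x,\cdot)}_{L^q_y}$, $G(x):=\norm{g(x,\cdot)}_{L^q_y}$, $H(x):=\norm{f(x,\cdot)+g(x,\cdot)}_{L^q_y}$, this gives the pointwise bound $H(x)^\gamma\le F(x)^\gamma+G(x)^\gamma$ for a.e.\ $x$. Next I would pass to the outer variable: since $\norm{f+g}_{L^p_xL^q_y}=\norm{H}_{L^p_x}$, homogeneity gives $\norm{f+g}_{L^p_xL^q_y}^{\gamma}=\norm{H^\gamma}_{L^{p/\gamma}_x}$, which by monotonicity and the pointwise bound is $\le\norm{F^\gamma+G^\gamma}_{L^{p/\gamma}_x}$. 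A second application of the quasi-triangle inequality, now in the space $L^{p/\gamma}_x$ with $\delta:=\min(p/\gamma,1)$, then yields
\[
\norm{f+g}_{L^p_xL^q_y}^{\gamma\delta}\le\norm{F^\gamma}_{L^{p/\gamma}_x}^{\delta}+\norm{G^\gamma}_{L^{p/\gamma}_x}^{\delta}=\norm{f}_{L^p_xL^q_y}^{\gamma\delta}+\norm{g}_{L^p_xL^q_y}^{\gamma\delta},
\]
using once more the homogeneity relation $\norm{F^\gamma}_{L^{p/\gamma}_x}=\norm{F}_{L^p_x}^\gamma=\norm{f}_{L^p_xL^q_y}^\gamma$.

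It only remains to identify the exponent: $\gamma\delta=\gamma\min(p/\gamma,1)=\min(p,\gamma)=\min(p,\min(q,1))=\min(p,q,1)=\beta$, which is exactly the claim. I do not expect a genuine obstacle here; the whole argument is bookkeeping. The two points that require a little care are (i) running the second step at precisely the auxiliary exponent $p/\gamma$, so that the two ``loss'' exponents $\min(q,1)$ and $\min(p/\gamma,1)$ multiply to exactly $\min(p,q,1)$ — a different choice would give a weaker conclusion — and (ii) noting that all quantities are finite, since $f,g\in L^p_xL^q_y$ forces $F,G\in L^p_x$ and hence $F^\gamma,G^\gamma\in L^{p/\gamma}_x$, so that the above displays make sense.
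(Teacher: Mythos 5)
Your argument is correct and is essentially the paper's proof: the paper runs the same two-step scheme (quasi-triangle inequality in $y$, then in $x$ at the auxiliary exponent $p/\min(q,1)$), only written out as four explicit cases according to whether $p,q$ lie above or below the thresholds, which are exactly the four combinations of your $\gamma$ and $\delta$ degenerating to $1$ or not. Your unified parametrization with $\gamma\delta=\min(p,\gamma)=\min(p,q,1)$ is a clean way to package the same computation.
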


\begin{proof}
 Throughout the proof, we will use the inequality $(a+b)^r\le a^r+b^r$ for all $a,b\ge0$, $r\in(0,1]$. We distinguish 4 cases. First, if $p,q\ge1$, then it follows from the triangle inequality in $L^p_xL^q_y$. Secondly, if $p<1\le q$, we have
$$
\norm{f+g}_{L^p_xL^q_y}^p=\int_X\norm{f+g}_{L^q_y}^p\,dx\le\int_X\left(\norm{f}_{L^q_y}+\norm{g}_{L^q_y}\right)^p\,dx\le\int_X\left(\norm{f}_{L^q_y}^p+\norm{g}_{L^q_y}^p\right)\,dx.
$$
Thirdly, if $q<1$ and $p\ge q$, then
\begin{multline*}
  \norm{f+g}_{L^p_xL^q_y}^q=\norm{\int_Y|f+g|^q\,dy}_{L^{p/q}_x}\le\norm{\int_Y|f|^q\,dy+\int_Y|g|^q\,dy}_{L^{p/q}_x}\\
  \le\norm{\int_Y|f|^q\,dy}_{L^{p/q}_x}+\norm{\int_Y|g|^q\,dy}_{L^{p/q}_x}.
\end{multline*}
Finally, if $q<1$ and $p<q$, then
\begin{multline*}
 \norm{f+g}_{L^p_xL^q_y}^p=\int_X\left(\int_Y|f+g|^q\,dy\right)^{p/q}\,dx\le\int_X\left(\int_Y|f|^q\,dy+\int_Y|f|^q\,dy\right)^{p/q}\,dx\\
 \le\int_X\left(\int_Y|f|^q\,dy\right)^{p/q}\,dx+\int_X\left(\int_Y|g|^q\,dy\right)^{p/q}\,dx.
\end{multline*}
\end{proof}

After these preliminaries, we can state and prove the one-sided analogue of the Br\'ezis--Lieb lemma, which is originally due to \cite{Lieb-83b,BreLie-83}. In \cite[Lem. 3.1]{FraLieSab-16} we have obtained a two-fold generalization of this lemma, namely, we allow the leading term to depend on $n$ and we allow for a remainder that converges strongly to zero. The following proposition is a generalization of this generalization to the case of mixed Lebesgue spaces. We emphasize that instead of equality we only have an asymptotic inequality.

\begin{proposition}\label{bl}
Let $(X,dx)$ and $(Y,dy)$ be measure spaces and $(f_n)$ be a sequence of measurable functions on $X\times Y$, and let $0< p, q<\infty$. Assume that
$$
\sup_n \int_X \left( \int_Y |f_n|^q \,dy \right)^{p/q} dx <\infty \,,
$$
and that $f_n$ may be split as 
$$f_n=\pi_n+\rho_n+\sigma_n$$
with $|\pi_n|\le\Pi$ for some $\Pi\in L^p_xL^q_y(X\times Y)$, $\rho_n\to0$ a.e. in $(x,y)\in X\times Y$, and $\sigma_n\to0$ in $L^p_x L^q_y(X \times Y)$. Then, as $n\to\infty$,
$$\norm{f_n}_{L^p_xL^q_y}^\alpha\le\norm{\pi_n}_{L^p_x L^q_y}^\alpha+\norm{\rho_n}_{L^p_x L^q_y}^\alpha+o(1),$$
where $\alpha=\alpha(p,q)=\min(p,q)$.
\end{proposition}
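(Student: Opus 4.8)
The plan is to raise everything to the power $\alpha=\min(p,q)$, which turns the mixed quasi-norm $\norm{\cdot}_{L^p_xL^q_y}$ into (the $\alpha$-th power of) a genuine norm, and then to run the one-parameter Br\'ezis--Lieb splitting of \cite[Lem.~3.1]{FraLieSab-16} (going back to \cite{Lieb-83b,BreLie-83}) inside the space $L^r_xL^s_y$ with $r:=p/\alpha$ and $s:=q/\alpha$. If $\alpha=\min(p,q)\le1$ there is nothing to prove beyond Lemma~\ref{lem:triangle-mixed}: then $\beta(p,q)=\min(p,q,1)=\alpha$, so writing $f_n=\pi_n+(\rho_n+\sigma_n)$ and applying that lemma twice gives $\norm{f_n}_{L^p_xL^q_y}^\alpha\le\norm{\pi_n}_{L^p_xL^q_y}^\alpha+\norm{\rho_n}_{L^p_xL^q_y}^\alpha+\norm{\sigma_n}_{L^p_xL^q_y}^\alpha$ and the last term is $o(1)$. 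So assume henceforth $\alpha>1$, hence $p,q>1$ and $r,s\ge1$; then $L^p_xL^q_y$ and $L^r_xL^s_y$ are normed spaces, and $\norm{g}_{L^p_xL^q_y}^\alpha=\norm{\,|g|^\alpha\,}_{L^r_xL^s_y}$ for every $g$.

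Next I dispose of $\sigma_n$. Since $\norm{\sigma_n}_{L^p_xL^q_y}\to0$ and $\norm{f_n}_{L^p_xL^q_y}$ is bounded, the triangle inequality gives $\big|\norm{f_n}_{L^p_xL^q_y}-\norm{f_n-\sigma_n}_{L^p_xL^q_y}\big|\le\norm{\sigma_n}_{L^p_xL^q_y}\to0$, and since both sides are bounded their $\alpha$-th powers differ by $o(1)$; thus $\norm{f_n}_{L^p_xL^q_y}^\alpha=\norm{\pi_n+\rho_n}_{L^p_xL^q_y}^\alpha+o(1)$ and it is enough to prove $\limsup_n\big(\norm{\pi_n+\rho_n}^\alpha-\norm{\pi_n}^\alpha-\norm{\rho_n}^\alpha\big)\le0$. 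Note that $\norm{\pi_n}_{L^p_xL^q_y}\le\norm{\Pi}_{L^p_xL^q_y}$, so $\norm{\pi_n+\rho_n}$ and hence $\norm{\rho_n}$ are bounded as well.

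Fix $\epsilon>0$. By the elementary inequality that for each $\epsilon>0$ there is $C_\epsilon$ with $\big|\,|a+b|^\alpha-|b|^\alpha\,\big|\le\epsilon|b|^\alpha+C_\epsilon|a|^\alpha$ for all $a,b\in\C$, taken with $a=\pi_n$ and $b=\rho_n$, we have $|\pi_n+\rho_n|^\alpha\le(1+\epsilon)|\rho_n|^\alpha+W_{n,\epsilon}$ with $W_{n,\epsilon}:=\big(|\pi_n+\rho_n|^\alpha-(1+\epsilon)|\rho_n|^\alpha\big)_+$ satisfying $0\le W_{n,\epsilon}\le C_\epsilon|\pi_n|^\alpha\le C_\epsilon\Pi^\alpha$. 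Since $t\mapsto t_+$ is $1$-Lipschitz and $|\pi_n|^\alpha\ge0$, $\big|W_{n,\epsilon}-|\pi_n|^\alpha\big|\le\big|\,|\pi_n+\rho_n|^\alpha-|\pi_n|^\alpha-(1+\epsilon)|\rho_n|^\alpha\,\big|$; on the full-measure set where $\rho_n\to0$ the right-hand side tends to $0$, because $(1+\epsilon)|\rho_n|^\alpha\to0$ and $\big|\,|\pi_n+\rho_n|^\alpha-|\pi_n|^\alpha\,\big|\le\alpha(\Pi+|\rho_n|)^{\alpha-1}|\rho_n|\to0$ using $|\pi_n|\le\Pi<\infty$ a.e. Hence $W_{n,\epsilon}-|\pi_n|^\alpha\to0$ a.e., dominated by $(C_\epsilon+1)\Pi^\alpha\in L^r_xL^s_y$; the dominated convergence theorem for mixed spaces recorded at the beginning of this appendix (applied to the $s$-th powers, with the roles of $X$ and $Y$ interchanged and outer exponent $r/s$) then yields $\norm{W_{n,\epsilon}-|\pi_n|^\alpha}_{L^r_xL^s_y}\to0$, so $\norm{W_{n,\epsilon}}_{L^r_xL^s_y}\le\norm{\pi_n}_{L^p_xL^q_y}^\alpha+o(1)$. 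Using the identity $\norm{g}_{L^p_xL^q_y}^\alpha=\norm{\,|g|^\alpha\,}_{L^r_xL^s_y}$ and the triangle inequality in $L^r_xL^s_y$, we conclude $\norm{\pi_n+\rho_n}_{L^p_xL^q_y}^\alpha\le(1+\epsilon)\norm{\rho_n}_{L^p_xL^q_y}^\alpha+\norm{W_{n,\epsilon}}_{L^r_xL^s_y}\le\norm{\pi_n}^\alpha+\norm{\rho_n}^\alpha+\epsilon\norm{\rho_n}^\alpha+o(1)$. Since $\norm{\rho_n}^\alpha$ is bounded, letting $n\to\infty$ and then $\epsilon\to0$ gives $\limsup_n\big(\norm{\pi_n+\rho_n}^\alpha-\norm{\pi_n}^\alpha-\norm{\rho_n}^\alpha\big)\le0$, which by the second paragraph proves the proposition.

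The one genuinely delicate point is the almost-everywhere convergence $W_{n,\epsilon}-|\pi_n|^\alpha\to0$. Neither $W_{n,\epsilon}$ nor $|\pi_n|^\alpha$ converges separately, since the dominating profile $\pi_n$ really depends on $n$; the scheme works only because this particular combination converges to zero while staying dominated by a fixed $L^r_xL^s_y$ function, and this is exactly where both hypotheses $|\pi_n|\le\Pi$ and $\rho_n\to0$ a.e.\ enter. The remaining ingredients, namely the dichotomy $\alpha\le1$ versus $\alpha>1$, the removal of $\sigma_n$, and the elementary pointwise inequality, are routine.
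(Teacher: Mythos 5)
Your proof is correct, and it runs on the same Br\'ezis--Lieb engine as the paper's proof --- first remove $\sigma_n$ via Lemma \ref{lem:triangle-mixed}, then use the $\epsilon$--$C_\epsilon$ inequality together with a positive-part truncation dominated by a power of $\Pi$, the a.e.\ convergence of $\rho_n$, and the mixed-space dominated convergence theorem --- but your handling of the mixed-norm structure is genuinely different. The paper applies the pointwise Br\'ezis--Lieb estimate at the exponent $q$ and proves the stronger two-sided statement \eqref{eq:bl}, namely the full asymptotic decoupling $\int_X \bigl( \int_Y \bigl| |\pi_n+\rho_n|^q-|\pi_n|^q-|\rho_n|^q \bigr| \,dy \bigr)^{p/q}dx\to0$, and then converts this into the norm inequality by distinguishing $p\le q$ (subadditivity of $t\mapsto t^{p/q}$) from $p>q$ (triangle inequality in $L^{p/q}_x$). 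You instead apply the pointwise estimate at the exponent $\alpha=\min(p,q)$ and use the identity $\norm{g}_{L^p_xL^q_y}^\alpha=\norm{|g|^\alpha}_{L^{p/\alpha}_xL^{q/\alpha}_y}$ to land in a single honest normed space, which eliminates the case distinction and lets you get away with the one-sided bound $\norm{W_{n,\epsilon}}_{L^{p/\alpha}_xL^{q/\alpha}_y}\le\norm{\pi_n}_{L^p_xL^q_y}^\alpha+o(1)$; the price is that you do not obtain the inner decoupling \eqref{eq:bl} itself, only the stated norm inequality, which is all the proposition requires. Your preliminary remark that the case $\alpha\le1$ follows from Lemma \ref{lem:triangle-mixed} alone, without any use of the hypotheses on $\pi_n$ and $\rho_n$, is a small simplification the paper does not make explicit. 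The delicate steps all check out: the $1$-Lipschitz bound for $t\mapsto t_+$, the pointwise estimate $\bigl| |\pi_n+\rho_n|^\alpha-|\pi_n|^\alpha \bigr|\le\alpha(\Pi+|\rho_n|)^{\alpha-1}|\rho_n|$ on the full-measure set $\{\Pi<\infty\}\cap\{\rho_n\to0\}$ (valid since $\alpha>1$ there), and the application of the mixed dominated convergence theorem to $|W_{n,\epsilon}-|\pi_n|^\alpha|^{q/\alpha}$ with dominant a constant times $\Pi^q$.
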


\begin{proof}
 We first show that we may get rid of $\sigma_n$, that is,
\begin{equation}
\label{eq:bl0}
\norm{f_n}_{L^p_x L^q_y}=\norm{\pi_n+\rho_n}_{L^p_x L^q_y}+o_{n\to\ii}(1).
\end{equation}
This follows from Lemma \ref{lem:triangle-mixed}, which implies that with $\beta=\min(\alpha,1)$,
$$\left|\norm{f_n}_{L^p_xL^q_y}^\beta-\norm{\pi_n+\rho_n}_{L^p_xL^q_y}^\beta\right|\le\norm{\sigma_n}_{L^p_xL^q_y}^\beta=o_{n\to\ii}(1) \,.$$
For $\alpha\ge 1$ this immediately gives \eqref{eq:bl0} and for $\alpha<1$ we use in addition the boundedness of $\|f_n\|_{L^p_x L^q_y}$ to deduce \eqref{eq:bl0}.

Next, we shall show that
\begin{equation}
\label{eq:bl}
\int_X \left( \int_Y \left| |\pi_n+\rho_n|^q-|\pi_n|^q -|\rho_n|^q \right| dy \right)^{p/q} dx = o_{n\to\ii}(1) \,.
\end{equation}
Let us first argue that this implies the conclusion. When $p\le q$, we use the elementary inequality
$$
A^\theta \le B^\theta + C^\theta + |A-B-C|^\theta \,,
\qquad A,B,C\ge 0\,,\ 0<\theta\le 1 \,,
$$
with $\theta=p/q$ and $A= \int_Y |\pi_n+\rho_n|^q\,dy$, $B=\int_Y |\pi_n|^q\,dy$ and $C=\int_Y |\rho_n|^q\,dy$. Then
$$
\int_X |A-B-C|^\theta \,dx \leq \int_X \left( \int_Y \left| |\pi_n+\rho_n|^q-|\pi_n|^q -|\rho_n|^q \right| dy \right)^{p/q} dx \,,
$$
so the conclusion follows by integrating the elementary inequality with respect to $x$. In the other case $p>q$, the inequality \eqref{eq:bl} implies that
$$\int_Y|\pi_n+\rho_n|^q\,dy=\int_Y|\pi_n|^q\,dy+\int_Y|\rho_n|^q\,dy+o_{L^{p/q}_x}(1),$$
as $n\to\ii$, so that the result follows from the triangle inequality in $L^{p/q}_x$. Thus, it remains to prove \eqref{eq:bl}. As in the usual Br\'ezis--Lieb proof, we use the fact that for any $\epsilon>0$ there is a $C_\epsilon$ such that for any $a,b\in\C$,
$$
\left| |a+b|^q - |b|^q \right| \leq \epsilon |b|^q + C_\epsilon |a|^q \,.
$$
Let us define
$$
h_n^{(\epsilon)} := \left( \left| |\pi_n+\rho_n|^q - |\pi_n|^q -|\rho_n|^q\right| - \epsilon |\rho_n|^q \right)_+ \,.
$$
On the full measure set $\{\Pi<\ii\}\cap\{\rho_n\to0\}$, $h_n^{(\epsilon)}\to0$ since $\pi_n(x,y)$ is bounded there. Hence, $h_n^{(\epsilon)}\to0$ almost everywhere. Since by the above inequality,
$$
\left| |\pi_n+\rho_n|^q - |\pi_n|^q -|\rho_n|^q\right| \leq \left| |\pi_n+\rho_n|^q - |\rho_n|^q \right| + |\pi_n|^q \leq \epsilon |\rho_n|^q + (1+C_\epsilon) |\pi_n|^q \,,
$$
we have $h_n^{(\epsilon)} \leq (1+C_\epsilon)|\Pi|^q$. Thus, by the analogue of the dominated convergence theorem recalled above,
\begin{equation}
\label{eq:blrem}
\int_X \left( \int_Y h_n^{(\epsilon)}\,dy \right)^{p/q} dx \to 0 \,.
\end{equation}
By definition of $h_n^{(\epsilon)}$ we have
$$
\left| |\pi_n+\rho_n|^q-|\pi_n|^q -|\rho_n|^q \right| \leq \epsilon |\rho_n|^q + h_n^{(\epsilon)}
$$
and therefore
$$
\int_X \left( \int_Y \left| |\pi_n+\rho_n|^q-|\pi_n|^q -|\rho_n|^q \right| dy \right)^{p/q} \,dx \leq \int_X \left( \int_Y \left( \epsilon |\rho_n|^q + h_n^{(\epsilon)}\right) dy \right)^{p/q} \,dx \,.
$$
In this inequality we first take the limsup as $n\to\infty$ and then we let $\epsilon\to 0$. Again by Lemma \ref{lem:triangle-mixed} and \eqref{eq:blrem}, we have
$$\int_X \left( \int_Y \left( \epsilon |\rho_n|^q + h_n^{(\epsilon)}\right) dy \right)^{p/q} \,dx=\epsilon^p\int_X \left( \int_Y |\rho_n|^q  dy \right)^{p/q} \,dx+o_{n\to\ii}(1).$$
Since the $L^p_xL^q_y$-norm of $f_n$, $\pi_n$, and $\sigma_n$ are uniformly bounded in $n$, the $L^p_xL^q_y$-norm of $\rho_n$ is uniformly bounded in $n$ by Lemma \ref{lem:triangle-mixed}. This proves \eqref{eq:bl}.
\end{proof}

\section{A homogenization result in mixed Lebesgue spaces}

The following is an extension of \cite[Lem. 5.2]{Allaire-92} to mixed Lebesgue spaces. We use the convention for the torus
$$\T^k:=(\R/(2\pi\Z))^k$$
and denote by $d\theta$ normalized Lebesgue measure on $\T^k$.

\begin{lemma}\label{lem:homogenization-mixed}
 Let $r>0$, $M,N\in\N^*$, and 
 $$\psi:\R^M\times\R^N\times\T^M\times\T^N\to\R_+$$ 
 a function satisfying the following assumptions:
 there exists a zero measure set $E\subset\R^M\times\R^N$ such that
 \begin{enumerate}
  \item For any $(x_1,x_2)\notin E$, $(\theta_1,\theta_2)\mapsto\psi(x_1,x_2,\theta_1,\theta_2)$ is continuous on $\T^M\times\T^N$;
  \item For any $(\theta_1,\theta_2)\in \T^M\times\T^N$, $(x_1,x_2)\mapsto\psi(x_1,x_2,\theta_1,\theta_2)$ is measurable on $\R^M\times\R^N$;
  \item $$\int_{\R^M}\left(\int_{\R^N}\sup_{(\theta_1,\theta_2)\in \T^M\times\T^N}\psi(x_1,x_2,\theta_1,\theta_2)\,dx_2\right)^r\,dx_1<\ii.$$
 \end{enumerate}
 Then, we have
 \begin{multline*}
    \lim_{\epsilon\to0}\int_{\R^M}\left(\int_{\R^N} \psi(x_1,x_2,x_1/\epsilon^2,x_2/\epsilon)\,dx_2\right)^r\,dx_1\\
    =\int_{\T^M}\int_{\R^M}\left(\int_{\T^N}\int_{\R^N}\psi(x_1,x_2,\theta_1,\theta_2)\,dx_2\,d\theta_2\right)^r\,dx_1\,d\theta_1 \,.
 \end{multline*}
\end{lemma}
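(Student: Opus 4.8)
The plan is to prove this by a density argument, reducing to a class of ``elementary'' functions for which the double scaling limit decouples into two applications of the classical Riemann--Lebesgue lemma. Call a function on $\R^M\times\R^N\times\T^M\times\T^N$ \emph{elementary} if it is a finite sum of products $\phi_1(x_1)\,w_1(\theta_1)\,\phi_2(x_2)\,w_2(\theta_2)$ with $\phi_1\in C_c(\R^M)$, $\phi_2\in C_c(\R^N)$, $w_1\in C(\T^M)$, $w_2\in C(\T^N)$. For a measurable $g$ put
$$
\norm{g}_*:=\left(\int_{\R^M}\left(\int_{\R^N}\sup_{\theta_1,\theta_2}|g(x_1,x_2,\theta_1,\theta_2)|\,dx_2\right)^r dx_1\right)^{1/\max(r,1)},
$$
which satisfies the triangle inequality (for $r\ge1$ by Lemma \ref{lem:triangle-mixed} with exponents $(r,1)$; for $r<1$ because $(a+b)^r\le a^r+b^r$). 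Hypotheses (1)--(2) say that $x\mapsto\psi(x,\cdot)$ is a Carath\'eodory function, hence a strongly measurable map into the separable Banach space $C(\T^M\times\T^N)$, and (3) says $\norm{\psi}_*<\ii$. I would then show that elementary functions are dense in $\{g:\norm{g}_*<\ii\}$ for $\norm{\cdot}_*$: approximate $\psi$ first by simple functions $\sum_i\1_{E_i}h_i$ dominated by $3\sup_\Theta|\psi|$ (convergence in $\norm{\cdot}_*$ then follows from the dominated convergence statement for $\int_{\R^M}(\int_{\R^N}\,\cdot\,dx_2)^r dx_1$ recalled at the beginning of Appendix \ref{app:BL}), then approximate each $\1_{E_i}$ by a $C_c(\R^M\times\R^N)$ function in $L^r_{x_1}L^1_{x_2}$, and finally apply the Stone--Weierstrass theorem separately on $\R^M\times\R^N$ (to write the $x$-factor as a sum of products $\phi_1(x_1)\phi_2(x_2)$ with controlled support) and on $\T^M\times\T^N$ (to write $h_i$ as a sum of products $w_1(\theta_1)w_2(\theta_2)$).

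Next I would introduce, for $\epsilon>0$, the functionals
$$
\Phi_\epsilon(g):=\int_{\R^M}\left|\int_{\R^N}g(x_1,x_2,x_1/\epsilon^2,x_2/\epsilon)\,dx_2\right|^r dx_1,\qquad \Phi_0(g):=\int_{\T^M}\int_{\R^M}\left|\int_{\R^N}\bar g(x_1,x_2,\theta_1)\,dx_2\right|^r dx_1\,d\theta_1,
$$
where $\bar g(x_1,x_2,\theta_1):=\int_{\T^N}g\,d\theta_2$; for $g=\psi\ge0$ these are exactly the two sides of the claimed identity, so the lemma is the statement $\Phi_\epsilon(\psi)\to\Phi_0(\psi)$. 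Writing $s:=1/\max(r,1)\in(0,1]$, one checks that $\Phi_\epsilon(\cdot)^s$ is subadditive for every $\epsilon\ge0$ (using that $g\mapsto\int_{\R^N}g(\cdot,\cdot,\cdot/\epsilon^2,\cdot/\epsilon)\,dx_2$ and $g\mapsto\bar g$ are linear, together with the quasi-triangle inequality in $L^r$), and, since $|\int_{\R^N}g\,dx_2|\le\int_{\R^N}\sup_\Theta|g|\,dx_2$ and $|\T^M|=1$, that
$$
\left|\Phi_\epsilon(g)^s-\Phi_\epsilon(\tilde g)^s\right|\le\Phi_\epsilon(g-\tilde g)^s\le\norm{g-\tilde g}_*\qquad\text{for all }\epsilon\ge0,
$$
a stability estimate that is \emph{uniform} in $\epsilon$. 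In particular $\Phi_\epsilon(\psi),\Phi_0(\psi)\le\norm{\psi}_*^{\max(r,1)}<\ii$.

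The core of the argument is the convergence $\Phi_\epsilon(\tilde\psi)\to\Phi_0(\tilde\psi)$ for an elementary $\tilde\psi=\sum_j\phi_1^{(j)}(x_1)w_1^{(j)}(\theta_1)\phi_2^{(j)}(x_2)w_2^{(j)}(\theta_2)$, where the two scales decouple. By the classical Riemann--Lebesgue lemma, $\int_{\R^N}\phi_2^{(j)}(x_2)w_2^{(j)}(x_2/\epsilon)\,dx_2\to c_j:=(\int_{\R^N}\phi_2^{(j)})(\int_{\T^N}w_2^{(j)})$ as $\epsilon\to0$, hence
$$
\int_{\R^N}\tilde\psi(x_1,x_2,x_1/\epsilon^2,x_2/\epsilon)\,dx_2=H(x_1,x_1/\epsilon^2)+\zeta_\epsilon(x_1),\qquad H(x_1,\theta_1):=\sum_j c_j\,\phi_1^{(j)}(x_1)w_1^{(j)}(\theta_1),
$$
with $\norm{\zeta_\epsilon}_{L^\ii_{x_1}}\to0$ and $\zeta_\epsilon,H(\cdot,\theta_1)$ supported in a fixed compact set; since $t\mapsto|t|^r$ is uniformly continuous on bounded sets (or $r$-H\"older when $r<1$), this gives $\Phi_\epsilon(\tilde\psi)=\int_{\R^M}|H(x_1,x_1/\epsilon^2)|^r dx_1+o(1)$. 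Now $(x_1,\theta_1)\mapsto|H(x_1,\theta_1)|^r$ is continuous with compact support in $x_1$, so \cite[Lem. 5.2]{Allaire-92} (applied with the scale $\epsilon^2\to0$, after reducing to a bounded box in $x_1$) yields $\int_{\R^M}|H(x_1,x_1/\epsilon^2)|^r dx_1\to\int_{\T^M}\int_{\R^M}|H|^r dx_1 d\theta_1$. Finally $\int_{\R^N}\bar{\tilde\psi}(x_1,x_2,\theta_1)\,dx_2=\sum_j c_j\phi_1^{(j)}(x_1)w_1^{(j)}(\theta_1)=H(x_1,\theta_1)$, so this limit equals $\Phi_0(\tilde\psi)$, proving the elementary case.

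To conclude, given $\delta>0$ choose an elementary $\tilde\psi$ with $\norm{\psi-\tilde\psi}_*<\delta$; then the triangle inequality and the uniform stability estimate give $|\Phi_\epsilon(\psi)^s-\Phi_0(\psi)^s|\le 2\delta+|\Phi_\epsilon(\tilde\psi)^s-\Phi_0(\tilde\psi)^s|$ for every $\epsilon>0$, whence $\limsup_{\epsilon\to0}|\Phi_\epsilon(\psi)^s-\Phi_0(\psi)^s|\le2\delta$; letting $\delta\to0$ and raising to the power $1/s$ gives $\Phi_\epsilon(\psi)\to\Phi_0(\psi)$, which is the assertion. The main obstacle is the density step of the first paragraph: one must choose the elementary class simultaneously dense for $\norm{\cdot}_*$ and structured enough (product form separating the $x$- and $\theta$-variables) that the two Riemann--Lebesgue limits above apply. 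Once that is in place the rest is soft, because the nonlinearity (the power $r$) is absorbed entirely into the $\epsilon$-uniform stability estimate.
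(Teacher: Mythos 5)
Your argument is correct, and it reaches the result by a genuinely different decomposition than the one in the paper, even though the overall skeleton (approximate, compute the limit for the approximants, conclude by an $\epsilon$-uniform stability estimate) is the same. The paper approximates $\psi$ only in the $\theta_1$-variable, by the piecewise-constant functions $\psi_n=\sum_i\psi(\cdot,\cdot,y_i,\cdot)\1_{Y_i}(\theta_1)$; the inner oscillation $x_2/\epsilon$ is then handled by applying \cite[Lem.~5.2]{Allaire-92} directly to the Carath\'eodory integrand $(x_2,\theta_2)\mapsto\psi(x_1,x_2,y_i,\theta_2)$ for a.e.\ fixed $x_1$, followed by dominated convergence in $x_1$ and a second application of Allaire's lemma in $(x_1,\theta_1)$. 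You instead tensorize completely, approximating $\psi$ in the Bochner-type quasi-norm $\norm{\cdot}_*$ by finite sums of products $\phi_1(x_1)w_1(\theta_1)\phi_2(x_2)w_2(\theta_2)$, which lets you replace the inner application of Allaire's lemma by the scalar Riemann--Lebesgue lemma and keep only one application of Allaire's lemma (for the $(x_1,\theta_1)$-scale). The trade-off is that your density step is heavier (strong measurability into $C(\T^M\times\T^N)$, vector-valued simple functions, $C_c$-approximation in $L^r_{x_1}L^1_{x_2}$, and two uses of Stone--Weierstrass), whereas the paper's approximation error $\delta_n=\sup_\theta|\psi_n-\psi|$ tends to zero a.e.\ with an immediate domination, so its error control is essentially free; all the pieces of your density argument are standard, but as you note it is the one place that genuinely needs to be written out. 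A minor stylistic gain on your side: the single exponent $s=1/\max(r,1)$ unifies the cases $r\le1$ and $r>1$, which the paper treats separately in its final stability step.
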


\begin{remark}
We state the lemma with the scale $\epsilon^2$ for $x_1$ only for our application; it can be replaced by any scale of the form $f(\epsilon)$ with $f(\epsilon)\to0$ as $\epsilon\to0$.
\end{remark}

\begin{proof}
 We mimic the proof of \cite[Lem. 5.2]{Allaire-92}, adapting it to the context of mixed Lebesgue spaces. Notice that our assumptions imply that $\psi$ is of Carath\'eodory type \cite[Def. VIII.1.2]{EkeTem-99} so that with the help of Fubini's theorem, all the integrals that we consider are well-defined (the measurability is the hard part; however by \cite[Prop. VIII.1.1]{EkeTem-99} the function $\psi$ coincides with a measurable function on $\R^M\times\R^N\times\T^M\times\T^N$ a.e. in $\R^M\times\R^N$, which imply that all the functions we consider are measurable on the appropriate space). Let $(Y_i)$ a paving of $\T^M$ by disjoint cubes of side length $1/n$. We first prove the result for the function 
 $$\psi_n(x_1,x_2,\theta_1,\theta_2):=\sum_i\psi(x_1,x_2,y_i,\theta_2)\1_{Y_i}(\theta_1),$$
 where $(y_i)$ are arbitrary points in $Y_i$. We have
 \begin{multline*}
    \int_{\R^M}\left(\int_{\R^N} \psi_n(x_1,x_2,x_1/\epsilon^2,x_2/\epsilon)\,dx_2\right)^r\,dx_1\\
    =\sum_i\int_{\R^M}\1_{Y_i}(x_1/\epsilon^2)\left(\int_{\R^N}\psi(x_1,x_2,y_i,x_2/\epsilon)\,dx_2\right)^r\,dx_1.
 \end{multline*}
 Using Fubini's theorem and \cite[Lem. 5.2]{Allaire-92}, we have for a.e. $x_1\in\R^M$ that
 $$\lim_{\epsilon\to0}\int_{\R^N}\psi(x_1,x_2,y_i,x_2/\epsilon)\,dx_2=\int_{\T^N}\int_{\R^N}\psi(x_1,x_2,y_i,\theta_2)\,dx_2\,d\theta_2.$$
 Furthermore, we have the uniform bound
 \begin{multline*}
  \left|\left(\int_{\R^N}\psi(x_1,x_2,y_i,x_2/\epsilon)\,dx_2\right)^r-\left(\int_{\T^N}\int_{\R^N}\psi(x_1,x_2,y_i,\theta_2)\,dx_2\,d\theta_2\right)^r\right|\\
  \le2\left(\int_{\R^N}\sup_{(\theta_1,\theta_2)\in \T^M\times\T^N}\psi(x_1,x_2,\theta_1,\theta_2)\,dx_2\right)^r
 \end{multline*}
 which is integrable on $\R^M$ by assumption. By Lebesgue's dominated convergence theorem, we deduce that
 \begin{multline*}
    \int_{\R^M}\left(\int_{\R^N} \psi_n(x_1,x_2,x_1/\epsilon^2,x_2/\epsilon)\,dx_2\right)^r\,dx_1\\
    =\sum_i\int_{\R^M}\1_{Y_i}(x_1/\epsilon^2)\left(\int_{\T^N}\int_{\R^N}\psi(x_1,x_2,y_i,\theta_2)\,dx_2\,d\theta_2\right)^r\,dx_1+o_{\epsilon\to0}(1).
 \end{multline*}
Applying \cite[Lem. 5.2]{Allaire-92} to the function
$$
(x_1,\theta_1) \mapsto \1_{Y_i}(\theta_1) \left(\int_{\T^N}\int_{\R^N}\psi(x_1,x_2,y_i,\theta_2)\,dx_2\,d\theta_2\right)^r
$$
we obtain
 \begin{align*}
    & \int_{\R^M}\left(\int_{\R^N} \psi_n(x_1,x_2,x_1/\epsilon^2,x_2/\epsilon)\,dx_2\right)^r\,dx_1\\
    &\qquad =\sum_i \int_{\T^M} \int_{\R^M}\1_{Y_i}(\theta_1)\left(\int_{\T^N}\int_{\R^N}\psi(x_1,x_2,y_i,\theta_2)\,dx_2\,d\theta_2\right)^r\,dx_1\,d\theta_1 +o_{\epsilon\to0}(1) \\
    &\qquad = \int_{\T^M} \int_{\R^M} \left(\int_{\T^N}\int_{\R^N}\psi_n(x_1,x_2,\theta_1,\theta_2)\,dx_2\,d\theta_2\right)^r\,dx_1\,d\theta_1 +o_{\epsilon\to0}(1) \,,
 \end{align*}
which is the claimed formula for $\psi_n$ instead of $\psi$.
 
In the remainder of the proof we derive the formula for $\psi$ by showing that $\psi_n$ approximates $\psi$ in a suitable topology. Indeed, as in \cite[Lem. 5.2]{Allaire-92}, we know that the function
 $$\delta_n(x_1,x_2):=\sup_{(\theta_1,\theta_2)\in\T^M\times\T^N}|\psi_n(x_1,x_2,\theta_1,\theta_2)-\psi(x_1,x_2,\theta_1,\theta_2)|$$
 satisfies $\delta_n\to0$ a.e. in $(x_1,x_2)$ and that $0\le\delta_n(x_1,x_2)\le g(x_1,x_2)$ with $$\begin{cases}
 \dps g(x_1,x_2):=2\sup_{(\theta_1,\theta_2)\in\T^M\times\T^N}\psi(x_1,x_2,\theta_1,\theta_2),\\
 \dps \int_{\R^M}\left(\int_{\R^N}g(x_1,x_2)\,dx_2\right)^r\,dx_1<\ii.                                                                                               \end{cases}
 $$
 Again by Fubini's theorem and Lebesgue's dominated convergence theorem, we deduce that 
 $$\lim_{n\to\ii}\int_{\R^M}\left(\int_{\R^N}\delta_n(x_1,x_2)\,dx_2\right)^r\,dx_1=0.$$
 For shortness, let us introduce the notations
 $$I_{1,2,\epsilon}[\psi]:=\int_{\R^M}\left(\int_{\R^N} \psi(x_1,x_2,x_1/\epsilon^2,x_2/\epsilon)\,dx_2\right)^r\,dx_1,$$
 $$\bar{I_{1,2}}[\psi]:=\int_{\T^M}\int_{\R^M}\left(\int_{\T^N}\int_{\R^N}\psi(x_1,x_2,\theta_1,\theta_2)\,dx_2\,d\theta_2\right)^r\,dx_1\,d\theta_1.$$
We need to show that $I_{1,2,\epsilon}[\psi]\to \bar{I_{1,2}}[\psi]$ as $\epsilon\to 0$ and, to do so, we distinguish whether $r\le 1$ or $r>1$. 
 
 If $r\le1$, we split
 $$I_{1,2,\epsilon}[\psi]-\bar{I_{1,2}}[\psi]=I_{1,2,\epsilon}[\psi]-I_{1,2,\epsilon}[\psi_n]+I_{1,2,\epsilon}[\psi_n]-\bar{I_{1,2}}[\psi_n]+\bar{I_{1,2}}[\psi_n]-\bar{I_{1,2}}[\psi].$$
 Using $|a^r-b^r|\le |a-b|^r$, we deduce that
 $$|I_{1,2,\epsilon}[\psi]-I_{1,2,\epsilon}[\psi_n]|\le \norm{\delta_n}_{L^rL^1}^r \,,\qquad |\bar{I_{1,2}}[\psi_n]-\bar{I_{1,2}}[\psi]|\le \norm{\delta_n}_{L^rL^1}^r,$$
 so that for all $\alpha>0$, there is $n$ large enough so that for all $\epsilon>0$,
 $$|I_{1,2,\epsilon}[\psi]-\bar{I_{1,2}}[\psi]|\le|I_{1,2,\epsilon}[\psi_n]-\bar{I_{1,2}}[\psi_n]|+\alpha.$$
 Taking the limit $\epsilon\to0$, we find the desired result.

 If $r>1$, we introduce the notation
 $$I_{2,\epsilon}[\psi](x_1):=\int_{\R^N} \psi(x_1,x_2,x_1/\epsilon^2,x_2/\epsilon)\,dx_2,$$
 $$\bar{I_2}[\psi](\theta_1,x_1):=\int_{\T^N}\int_{\R^N}\psi(x_1,x_2,\theta_1,\theta_2)\,dx_2\,d\theta_2,$$
 so that $I_{1,2,\epsilon}[\psi]^{1/r}=\norm{I_{2,\epsilon}[\psi]}_{L^r_{x_1}}$ and $\bar{I_{1,2}}[\psi]^{1/r}=\norm{\bar{I_2}[\psi]}_{L^r_{\theta_1,x_1}}$. We now split
 \begin{multline*}
    I_{1,2,\epsilon}[\psi]^{1/r}-\bar{I_{1,2}}[\psi]^{1/r}=\norm{I_{2,\epsilon}[\psi]}_{L^r_{x_1}}-\norm{I_{2,\epsilon}[\psi_n]}_{L^r_{x_1}}+\norm{I_{2,\epsilon}[\psi_n]}_{L^r_{x_1}}-\norm{\bar{I_2}[\psi_n]}_{L^r_{\theta_1,x_1}}\\
    +\norm{\bar{I_2}[\psi_n]}_{L^r_{\theta_1,x_1}}-\norm{\bar{I_2}[\psi]}_{L^r_{\theta_1,x_1}}.
 \end{multline*}
 We now use the estimates
 $$\left|\norm{I_{2,\epsilon}[\psi]}_{L^r_{x_1}}-\norm{I_{2,\epsilon}[\psi_n]}_{L^r_{x_1}}\right|\le\norm{I_{2,\epsilon}[\psi-\psi_n]}_{L^r_{x_1}}\le\norm{\delta_n}_{L^rL^1},$$
 $$\left|\norm{\bar{I_2}[\psi_n]}_{L^r_{\theta_1,x_1}}-\norm{\bar{I_2}[\psi]}_{L^r_{\theta_1,x_1}}\right|\le\norm{\bar{I_2}[\psi_n-\psi]}_{L^r_{\theta_1,x_1}}\le\norm{\delta_n}_{L^rL^1},$$
 to deduce similarly as for $r\leq 1$ that $I_{1,2,\epsilon}[\psi]^{1/r} \to \bar{I_{1,2}}[\psi]^{1/r}$ as $\epsilon\to 0$. This completes the proof of the lemma.
 \end{proof}


\section{A complex interpolation result}


\begin{proposition}\label{prop:complex-interpolation}
Let $1<p_0,p_1,q_0,q_1<\ii$, $\alpha_0,\alpha_1>0$ and $\theta\in(0,1)$. Define 
$$\frac{1}{p_\theta}=\frac{\theta}{p_1}+\frac{1-\theta}{p_0},\quad\frac{1}{q_\theta}=\frac{\theta}{q_1}+\frac{1-\theta}{q_0},\quad\alpha_\theta=\theta\alpha_1+(1-\theta)\alpha_0.$$
Then, there exists $C>0$ such that for all $f:\R_t\times\R_x\to\C$ such that the right side is well-defined, we have 
$$\norm{|D_x|^{\alpha_\theta}f}_{L^{p_\theta}_tL^{q_\theta}_x}\le C\norm{|D_x|^{\alpha_1}f}_{L^{p_1}_tL^{q_1}_x}^\theta\norm{|D_x|^{\alpha_0}f}_{L^{p_0}_tL^{q_0}_x}^{1-\theta}.$$ 
\end{proposition}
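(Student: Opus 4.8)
The plan is to prove the estimate by complex interpolation, realizing $|D_x|^{\alpha_\theta}f$ as the value at $z=\theta$ of an analytic family and using the complex interpolation of mixed Lebesgue spaces. Write $A_j:=L^{p_j}_tL^{q_j}_x(\R\times\R)$ for $j=0,1$. Since $1<p_j,q_j<\ii$ one has the standard identification $[A_0,A_1]_\theta=L^{p_\theta}_tL^{q_\theta}_x$ with equivalent norms, which follows from the scalar fact $[L^{q_0}_x,L^{q_1}_x]_\theta=L^{q_\theta}_x$ together with the vector-valued interpolation theorem $[L^{p_0}_t(E_0),L^{p_1}_t(E_1)]_\theta=L^{p_\theta}_t([E_0,E_1]_\theta)$ (Benedek--Panzone, Calder\'on). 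By a routine approximation argument --- inserting a Fourier cutoff $\phi(\xi/n)(1-\phi(n\xi))$ in the $x$-variable (with $\phi$ a smooth bump equal to $1$ near the origin), together with a truncation and mollification in $t$, and using the uniform multiplier bounds of Mikhlin's theorem and the Fatou property of the $L^{p_\theta}_tL^{q_\theta}_x$-norm --- it suffices to prove the inequality for $f$ whose $x$-Fourier transform is smooth, compactly supported, and supported away from $\xi=0$; on such $f$ all the operators $|D_x|^s$, $s\in\R$, are well-defined, bounded, commuting, and preserve the class.

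For such $f$, set $\alpha(z):=(1-z)\alpha_0+z\alpha_1$ and consider, on the closed strip $\{0\le\re z\le1\}$, the $(A_0+A_1)$-valued function
\[
\mathbf{a}(z):=e^{\delta(z-\theta)^2}\,|D_x|^{\alpha(z)}f \,,
\]
where $\delta>0$ is fixed; the Gaussian factor serves only to counteract polynomial growth on the boundary lines. For $f$ in the dense class, $z\mapsto\mathbf{a}(z)$ is analytic with values in that class, hence in $A_0\cap A_1$, so $\mathbf{a}$ is admissible for Calder\'on's method. On $\re z=0$ we have $\mathbf{a}(i\tau)=e^{\delta(i\tau-\theta)^2}\,|D_x|^{i\tau(\alpha_1-\alpha_0)}\big(|D_x|^{\alpha_0}f\big)$, and since the symbol $|\xi|^{i\sigma}$ satisfies the one-dimensional Mikhlin condition with constant $O(1+|\sigma|)$, the operator $|D_x|^{i\sigma}$ is bounded on $L^{q_0}_x$ --- hence, acting only in $x$, on $A_0$ with the same norm --- by $O(1+|\sigma|)$. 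Absorbing $\sup_\tau e^{\delta(\theta^2-\tau^2)}(1+|\tau|)<\ii$ into a constant gives $\sup_\tau\norm{\mathbf{a}(i\tau)}_{A_0}\le C\norm{|D_x|^{\alpha_0}f}_{A_0}$, and the analogous computation on $\re z=1$ (using $|D_x|^{\alpha(1+i\tau)}f=|D_x|^{i\tau(\alpha_1-\alpha_0)}(|D_x|^{\alpha_1}f)$) gives $\sup_\tau\norm{\mathbf{a}(1+i\tau)}_{A_1}\le C\norm{|D_x|^{\alpha_1}f}_{A_1}$.

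Consequently $\mathbf{a}(\theta)=|D_x|^{\alpha_\theta}f$ lies in $[A_0,A_1]_\theta$; balancing the two boundary suprema by replacing $\mathbf{a}(z)$ with $\lambda^{z-\theta}\mathbf{a}(z)$ for a suitable $\lambda>0$ (which leaves the value at $\theta$ unchanged), Calder\'on's formula for the interpolation norm yields
\[
\norm{|D_x|^{\alpha_\theta}f}_{[A_0,A_1]_\theta}\le C\,\norm{|D_x|^{\alpha_0}f}_{A_0}^{\,1-\theta}\,\norm{|D_x|^{\alpha_1}f}_{A_1}^{\,\theta} \,,
\]
and combining with the identification $[A_0,A_1]_\theta=L^{p_\theta}_tL^{q_\theta}_x$ completes the proof.

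The step requiring the most care is the quantitative $L^q$-boundedness of the imaginary powers $|D_x|^{i\sigma}$ with the linear-in-$|\sigma|$ dependence on the exponent: this is precisely what makes the Gaussian regularization sufficient to keep the boundary suprema finite, and it is where $1<q_j<\ii$ (and $1<p_j<\ii$) is used. One also has to record the elementary fact that a Fourier multiplier acting in $x$ alone is bounded on $L^{p_j}_tL^{q_j}_x$ with the same norm as on $L^{q_j}_x$, and to verify that the approximation in the first paragraph converges simultaneously in $A_0$ and $A_1$. The remaining ingredients (analyticity of $\mathbf{a}$; the mixed-norm interpolation identity) are standard. Alternatively, one can bypass the mixed-norm interpolation theorem by running the three-lines lemma directly on $z\mapsto\langle|D_x|^{\alpha(z)}f,\,g_z\rangle$, where $h\mapsto g_z$ is the usual two-step Calder\'on analytic family built from a fixed $h$ in the unit ball of $L^{p_\theta'}_tL^{q_\theta'}_x$, and concluding by duality.
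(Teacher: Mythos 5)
Your argument is correct, and it reaches the same conclusion as the paper by a genuinely different route at the level of execution, even though both proofs are complex-interpolation arguments whose quantitative heart is the same: the operator norm of the imaginary power $|D_x|^{i\sigma}$ on $L^q_x$, $1<q<\ii$, grows only polynomially (linearly) in $|\sigma|$ by Mikhlin/Marcinkiewicz, and this growth must be tamed along the boundary lines of the strip. You do this by working with the abstract Calder\'on functor: you invoke the identification $[L^{p_0}_tL^{q_0}_x,L^{p_1}_tL^{q_1}_x]_\theta=L^{p_\theta}_tL^{q_\theta}_x$ (Calder\'on/Benedek--Panzone, e.g.\ Bergh--L\"ofstr\"om Thm.~5.1.2), feed in the single analytic family $\mathbf a(z)=e^{\delta(z-\theta)^2}|D_x|^{\alpha(z)}f$, and use the Gaussian factor to absorb the linear growth. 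The paper instead avoids any citation of mixed-norm interpolation spaces: it dualizes against $g\in L^{p_\theta'}_tL^{q_\theta'}_x$, builds the two-step Calder\'on family $g_z$ explicitly (with exponents $a,b,c,d$ chosen to match $q_\theta'/q_j'$ and $p_\theta'/p_j'$ on the boundary), tames the growth with the prefactor $(1+z)^{-1}$ rather than a Gaussian, and concludes with Hadamard's three-lines lemma --- precisely the alternative you sketch in your final sentence. Your version is shorter and more modular but leans on the vector-valued interpolation theorem as a black box; the paper's is longer but self-contained, using only the scalar three-lines lemma and the multiplier theorem. Your treatment of the density/approximation step (uniform Mikhlin bounds for the cutoffs plus Fatou) is, if anything, more explicit than the paper's one-line appeal to density, and the boundary-balancing via $\lambda^{z-\theta}$ is standard. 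No gaps.
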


\begin{proof}
 By density, it suffices to prove the inequality for any $f$ such that $\hat{f}\in C^\ii_0(\R^2\setminus\{(t,0),\,t\in\R\})$, where $\hat{f}$ is the $x$-Fourier transform of $f$. By duality, it is enough to prove that there exists $C>0$ such that for all $g\in L^{p_\theta'}L^{q_\theta'}$ we have
 \begin{equation}\label{eq:complex-interp-obj}
    \langle g,|D_x|^{\alpha_\theta}f\rangle\le C\norm{g}_{L^{p_\theta'}L^{q_\theta'}}\norm{|D_x|^{\alpha_1}f}_{L^{p_1}_tL^{q_1}_x}^\theta\norm{|D_x|^{\alpha_0}f}_{L^{p_0}_tL^{q_0}_x}^{1-\theta}.
 \end{equation}
 Hence, let $g\in L^{p_\theta'}L^{q_\theta'}$. We write $g$ as $g=|g|h$ with $|g|$ and $h$ measurable, $|h|\le1$. For $z\in\C$, consider the function
 $$\phi(z)=(1+z)^{-1}\langle g_z,|D_x|^{z\alpha_1  + (1-z)\alpha_0}f\rangle,\quad g_z(t,x):=\norm{g(t,\cdot)}_{L^{q_\theta'}}^{cz+d}|g(t,x)|^{az+b}h(t,x),$$
 with the convention $0^z:=0$, and where the parameters $a,b,c,d$ are chosen so that
 $$a=\frac{q_0'-q_1'}{\theta q_0'+(1-\theta)q_1'},\quad  b=\frac{q_\theta'}{q_0'}=\frac{q_1'}{\theta q_0'+(1-\theta)q_1'},\quad
    c=-\frac{d}{\theta},\quad
    d=\frac{p_\theta'}{p_0'}-\frac{q_\theta'}{q_0'}.
 $$
 These assumptions imply the relations
 $$a+b=\frac{q_\theta'}{q_1'},\quad b+d=\frac{p_\theta'}{p_0'},\quad a+b+c+d=\frac{p_\theta'}{p_1'}.$$
 Let $S=\{\lambda+is,\,\lambda\in(0,1),\,s\in\R\}$ a strip in the complex plane, and let us show that $\phi$ is analytic on $S$, continuous on $\bar{S}$. For a.e. $(t,x)$, the function $z\mapsto \bar{g_z(t,x)}(|D_x|^{z\alpha_1+(1-z)\alpha_0}f)(t,x)$ is analytic on $S$, continuous on $\bar{S}$ by the support assumptions made on $\hat{f}$. They also imply that there exists a $T>0$ such that for any $N\in\N$, there exists $C_{N,f}$ such that for any $z=\lambda+is\in\bar{S}$ and for any $(t,x)\in\R^2$ we have
 $$|(|D_x|^{z\alpha_1+(1-z)\alpha_0}f)(t,x)|\le C_{N,f}e^{|s|}\1_{[-T,T]}(t)(1+|x|)^{-N}.$$
 This can be done by integration by parts in the $x$-Fourier variables, the factor $e^{|s|}$ coming from the derivatives of $|\xi|^{z\alpha_1+(1-z)\alpha_2}$ which can be bounded by $|s|^M$ for some power $M$ depending on $N$, which we choose to bound independently by $e^{|s|}$. Furthermore, for any $z\in\bar{S}$, the extremal values of $a\lambda+b$ are $b=q_\theta'/q_0'$ and $a+b=q_\theta'/q_1'$ so that
 $$|g(t,x)|^{a\lambda+b}\le|g(t,x)|^{q_\theta'/q_0'}+|g(t,x)|^{q_\theta'/q_1'}\in L^{q_0'}_x+L^{q_1'}_x.$$
 As a consequence, we infer that for a.e. $t\in\R$,
 $$z\mapsto\int_\R \bar{g_z(t,x)}(|D_x|^{z\alpha_1+(1-z)\alpha_0}f)(t,x)\,dx$$
 is analytic on $S$ and continuous on $\bar{S}$. It satisfies the bound for any $z\in\bar{S}$ and a.e. $t\in\R$
 $$\left|\int_\R \bar{g_z(t,x)}(|D_x|^{z\alpha_1+(1-z)\alpha_0}f)(t,x)\,dx\right|\le C_{N,f}\1_{[-T,T]}(t)e^{|s|}\norm{g(t,\cdot)}_{L^{q_\theta'}}^{(a+c)\lambda+b+d}.$$
 The extremal values of $(a+c)\lambda+b+d$ are $b+d=p_\theta'/p_0'$ and $a+b+c+d=p_\theta'/p_1'$ so that
 $$\norm{g(t,\cdot)}_{L^{q_\theta'}}^{(a+c)\lambda+b+d}\le\norm{g(t,\cdot)}_{L^{q_\theta'}}^{p_\theta'/p_0'}+\norm{g(t,\cdot)}_{L^{q_\theta'}}^{p_\theta'/p_1'}\in L^{p_0'}_t+L^{p_1'}_t.$$
 This implies that $\phi$ is analytic on $S$ and continuous on $\bar{S}$, with the bound valid for any $z=\lambda+is\in\bar{S}$,
 $$|\phi(z)|\le C_{N,f}e^{|s|}\norm{g}_{L^{p_\theta'}_tL^{q_\theta'}_x}.$$
 On the boundary of $S$, let us show more precise bounds. For any $s\in\R$, we have 
 $$\left|\int_\R \bar{g_{is}(t,x)}(|D_x|^{\alpha_0+is(\alpha_1-\alpha_0)}f)(t,x)\,dx\right|\le\norm{g_{is}(t,\cdot)}_{L^{q_0'}_x}\norm{|D_x|^{is(\alpha_1-\alpha_0)}|D_x|^{\alpha_0}f}_{L^{q_0}_x}.$$
 For any $\eta\in\R$, the Fourier multiplier by $m_\eta(\xi):=|\xi|^{i\eta}$ satisfies the bounds
 $$|m_\eta(\xi)|\le1,\quad|\xi||m_\eta'(\xi)|\le|\eta|.$$
 By the Marcinkiewicz multiplier theorem \cite[Thm. 5.2.2]{Grafakos-book-08}, this implies the $L^p$-bound for all $p>1$:
 $$\norm{|D_x|^{i\eta}g}_{L^p}\le C(1+|\eta|)\norm{g}_{L^p},$$
 which in our case gives 
 $$\norm{|D_x|^{is(\alpha_1-\alpha_0)}|D_x|^{\alpha_0}f}_{L^{q_0}_x}\le C(1+|s|)\norm{|D_x|^{\alpha_0}f}_{L^{q_0}_x}.$$
 Using the bound
 $$\norm{g_{is}(t,\cdot)}_{L^{q_0'}_x}\le\norm{g(t,\cdot)}_{L^{q_\theta'}}^{b+d},$$
 together with the relation $b+d=p_\theta'/p_0'$, we deduce that
 $$|\phi(is)|\le C\norm{g}_{L^{p_\theta'}_tL^{q_\theta'}_x}\norm{|D_x|^{\alpha_0}f}_{L^{p_0}_tL^{q_0}_x}.$$
 Here, we see the role of the prefactor $(1+z)^{-1}$ in front of $\phi(z)$ to compensate the growth of the $L^p$-multiplier norm of $m_\eta$. By the same method, we have the estimate for all $s\in\R$
 $$|\phi(is)|\le C\norm{g}_{L^{p_\theta'}_tL^{q_\theta'}_x}\norm{|D_x|^{\alpha_1}f}_{L^{p_1}_tL^{q_1}_x}$$
 using the relations $a+b=q_\theta'/q_1'$ and $a+b+c+d=p_\theta'/p_1'$. Using Hadamard's three line lemma \cite[Thm. 5.2.1]{Simon-basic-complex}, we deduce \eqref{eq:complex-interp-obj}, which ends the proof.
\end{proof}


\section{Weak compactness of the Airy--Strichartz map}

\begin{lemma}\label{lem:compactness-smoothing}
 Let $\alpha\in(-1/2,1)$ and $(u_n)\subset L^2(\R)$ a sequence converging weakly to zero in $L^2(\R)$. Then, up to a subsequence, $|D_x|^\alpha e^{-t\partial_x^3}u_n\to0$ a.e. in $\R^2$.
\end{lemma}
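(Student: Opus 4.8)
The plan is to prove the stronger statement that $|D_x|^\alpha e^{-t\partial_x^3}u_n\to 0$ in $L^2_{\mathrm{loc}}(\R\times\R)$ for the whole sequence; this yields a.e.\ convergence along a subsequence on each box, and a diagonal extraction over an exhaustion of $\R^2$ by boxes produces a single subsequence converging to $0$ a.e.\ on $\R^2$. So fix a box $K=[-T,T]\times[-R,R]$; I will show $\norm{|D_x|^\alpha e^{-t\partial_x^3}u_n}_{L^2(K)}\to0$. Put $M:=\sup_n\norm{u_n}_{L^2}<\ii$ (finite by the uniform boundedness principle) and, for $\Lambda>0$, let $P_{\le\Lambda}$ and $P_{>\Lambda}$ be the sharp Fourier projections onto $\{|\xi|\le\Lambda\}$ and $\{|\xi|>\Lambda\}$. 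I would estimate $|D_x|^\alpha e^{-t\partial_x^3}P_{\le\Lambda}u_n$ and $|D_x|^\alpha e^{-t\partial_x^3}P_{>\Lambda}u_n$ separately; the two endpoint restrictions enter in exactly one place each, namely $\alpha<1$ for the high piece and $\alpha>-1/2$ for the low piece.

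For the high piece I would use the local smoothing (Kato) effect for the Airy group in the clean form of the identity
\[
\int_\R \left||D_x|e^{-t\partial_x^3}w(x)\right|^2\,dt = \frac13\norm{w}_{L^2}^2 \qquad\text{for all } w\in L^2(\R) \text{ and all } x\in\R ,
\]
which follows from Plancherel in $t$ after substituting $\eta=\xi^3$ separately on $\{\xi>0\}$ and $\{\xi<0\}$; the one thing to notice is that the factor $|\xi|^{-2}$ coming from the Jacobian of the substitution is exactly cancelled by the $|\xi|^2$ coming from $|D_x|$, so that no singularity at $\xi=0$ appears. To pass from $|D_x|$ to $|D_x|^\alpha$ with $\alpha<1$, decompose $P_{>\Lambda}u_n=\sum_{k\ge0}v_n^{(k)}$ with $\hat{v_n^{(k)}}$ supported in $\{2^k\Lambda\le|\xi|<2^{k+1}\Lambda\}$; each $v_n^{(k)}$ is band-limited away from the origin, so $|D_x|^\alpha e^{-t\partial_x^3}v_n^{(k)}=|D_x|e^{-t\partial_x^3}\bigl(|D_x|^{\alpha-1}v_n^{(k)}\bigr)$ with $\norm{|D_x|^{\alpha-1}v_n^{(k)}}_{L^2}\le(2^k\Lambda)^{\alpha-1}\norm{v_n^{(k)}}_{L^2}$ because $\alpha-1<0$. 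Integrating the identity over $|x|\le R$, then using the triangle inequality in $L^2(K)$, the Cauchy--Schwarz inequality in $k$, the orthogonality $\sum_k\norm{v_n^{(k)}}_{L^2}^2=\norm{P_{>\Lambda}u_n}_{L^2}^2$, and $\sum_k 2^{2k(\alpha-1)}<\ii$, I would obtain
\[
\norm{|D_x|^\alpha e^{-t\partial_x^3}P_{>\Lambda}u_n}_{L^2(K)}\le C_R\,\Lambda^{\alpha-1}\norm{P_{>\Lambda}u_n}_{L^2}\le C_R\,\Lambda^{\alpha-1}M\xrightarrow[\Lambda\to\ii]{}0 ,
\]
uniformly in $n$ (the dyadic sum also serves to define the left-hand side as an element of $L^2(K)$).

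For the low piece, fix $\Lambda$. For every $(t,x)$,
\[
|D_x|^\alpha e^{-t\partial_x^3}P_{\le\Lambda}u_n(x)=\frac1{\sqrt{2\pi}}\int_{|\xi|\le\Lambda}|\xi|^\alpha e^{it\xi^3+ix\xi}\hat{u_n}(\xi)\,d\xi
\]
is the $L^2(\R)$ inner product of $u_n$ with the fixed function whose Fourier transform is $\1_{\{|\xi|\le\Lambda\}}|\xi|^\alpha e^{-it\xi^3-ix\xi}$, a function of $L^2$-norm $\bigl(\tfrac1{2\pi}\int_{|\xi|\le\Lambda}|\xi|^{2\alpha}\,d\xi\bigr)^{1/2}=c\,\Lambda^{\alpha+1/2}$, the integral being finite precisely because $\alpha>-1/2$. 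Hence $u_n\rightharpoonup0$ forces $|D_x|^\alpha e^{-t\partial_x^3}P_{\le\Lambda}u_n(x)\to0$ pointwise in $(t,x)$, while $\bigl||D_x|^\alpha e^{-t\partial_x^3}P_{\le\Lambda}u_n(x)\bigr|\le c\,\Lambda^{\alpha+1/2}M$ uniformly in $n$ and $(t,x)$; since $K$ has finite measure, dominated convergence gives $\norm{|D_x|^\alpha e^{-t\partial_x^3}P_{\le\Lambda}u_n}_{L^2(K)}\to0$ as $n\to\ii$, for each fixed $\Lambda$.

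Combining: given $\delta>0$, first choose $\Lambda$ so large that $C_R\Lambda^{\alpha-1}M\le\delta/2$, then $N$ so large that $\norm{|D_x|^\alpha e^{-t\partial_x^3}P_{\le\Lambda}u_n}_{L^2(K)}\le\delta/2$ for $n\ge N$; then $\norm{|D_x|^\alpha e^{-t\partial_x^3}u_n}_{L^2(K)}\le\delta$ for $n\ge N$. Since $K$ and $\delta$ are arbitrary, $|D_x|^\alpha e^{-t\partial_x^3}u_n\to0$ in $L^2_{\mathrm{loc}}(\R^2)$, and the claimed a.e.\ convergence along a subsequence follows by the diagonal argument. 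The only non-routine ingredient is the local smoothing identity, but this is a classical fact for the Airy/KdV evolution (in the form above it is an immediate change of variables plus Plancherel), so I do not expect any real obstacle; everything else is bookkeeping with frequency projections.
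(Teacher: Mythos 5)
Your proof is correct and follows essentially the same route as the paper: split into low and high frequencies with a cutoff $\Lambda$, control the high piece via the Kato local smoothing identity for the Airy group (your pointwise-in-$x$ identity is just the $a=\delta_x$ form of the paper's Lemma \ref{lem:local-smoothing-airy}, and the dyadic decomposition is unnecessary since $\norm{P_{>\Lambda}|D_x|^{\alpha-1}}_{L^2\to L^2}\le\Lambda^{\alpha-1}$ directly), and handle the low piece by weak convergence plus dominated convergence on the bounded set, using $\alpha>-1/2$ exactly where the paper does. The only cosmetic difference is that the paper phrases the low-frequency step as compactness of the operator $\chi_K(t,\cdot)P_\Lambda|D_x|^\alpha e^{-t\partial_x^3}$ on $L^2_x$ for fixed $t$, whereas you test $u_n$ pointwise against a fixed $L^2$ function; both are fine.
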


The proof follows from some local smoothing properties of the Airy kernel:

\begin{lemma}\label{lem:local-smoothing-airy}
 Let $a\in L^1(\R)$ a non-negative function. Then, for all $u\in L^2(\R)$ we have
 $$\int_{\R^2}a(x)\left||D_x|e^{-t\partial_x^3}u(x)\right|^2dx\,dt\le \frac{1}{3}\norm{a}_{L^1}\norm{u}_{L^2}^2.$$
\end{lemma}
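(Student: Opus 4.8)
The plan is to establish the stronger, pointwise-in-$x$ identity
\begin{equation}\label{eq:airy-ls-ptw}
\int_\R \left| |D_x| e^{-t\partial_x^3} u(x) \right|^2 dt = \frac{1}{3}\, \norm{u}_{L^2}^2 \qquad\text{for a.e. } x\in\R,
\end{equation}
after which the lemma is immediate: multiply \eqref{eq:airy-ls-ptw} by $a(x)\ge 0$, integrate over $x\in\R$, use Tonelli's theorem to exchange the order of the $x$- and $t$-integrations on the left, and use $\int_\R a(x)\,dx=\norm{a}_{L^1}$. By a routine density argument it suffices to prove \eqref{eq:airy-ls-ptw} when $\hat u\in C^\ii_0(\R\setminus\{0\})$; for such $u$ the function $w(t,x):=|D_x|e^{-t\partial_x^3}u(x)=\frac{1}{\sqrt{2\pi}}\int_\R|\xi|e^{it\xi^3+ix\xi}\hat u(\xi)\,d\xi$ is smooth and all the manipulations below are classically justified, while the general case follows because the resulting estimate shows that $u\mapsto w$ extends continuously from $L^2(\R)$ to $L^2(\R^2,a(x)\,dx\,dt)$, consistently with the distributional definition of $|D_x|e^{-t\partial_x^3}u$.

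First I would diagonalize the $t$-dependence by the substitution $\xi=\sgn(\tau)|\tau|^{1/3}=:\sigma(\tau)$, i.e.\ $\tau=\xi^3$, which is a $C^1$ diffeomorphism of $\R\setminus\{0\}$ with $d\tau=3\xi^2\,d\xi$. For each fixed $x$ this gives
$$
w(t,x)=\frac{1}{\sqrt{2\pi}}\int_\R e^{it\tau}\,G_x(\tau)\,d\tau,\qquad G_x(\tau):=\frac{1}{3|\tau|^{1/3}}\,e^{ix\sigma(\tau)}\,\hat u(\sigma(\tau)),
$$
so that $t\mapsto w(t,x)$ is the inverse $t$-Fourier transform of $G_x$, and $G_x\in L^1\cap L^2(\R)$ since $\hat u$ is supported away from the origin. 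Then I would apply Plancherel in $t$, $\int_\R|w(t,x)|^2\,dt=\int_\R|G_x(\tau)|^2\,d\tau$, and undo the substitution ($\tau=\xi^3$, $d\tau=3\xi^2\,d\xi$, $|\tau|^{2/3}=\xi^2$) to obtain
$$
\int_\R|G_x(\tau)|^2\,d\tau=\int_\R\frac{|\hat u(\xi)|^2}{9\,\xi^2}\,3\xi^2\,d\xi=\frac{1}{3}\int_\R|\hat u(\xi)|^2\,d\xi=\frac{1}{3}\norm{u}_{L^2}^2,
$$
which is finite and, crucially, independent of $x$. This is exactly \eqref{eq:airy-ls-ptw}, and the lemma follows as explained above.

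I do not anticipate a genuine obstacle here; the two points that require a little care are (i) the legitimacy of the change of variables $\tau=\xi^3$ near the degenerate critical point $\xi=0$, which is harmless since $\{0\}$ is Lebesgue-null and $\xi\mapsto\xi^3$ restricts to a $C^1$ diffeomorphism of $\R\setminus\{0\}$, and (ii) giving meaning to $|D_x|e^{-t\partial_x^3}u(x)$ for general $u\in L^2$ (for fixed $t$ it need not lie in $L^2_x$), which is handled by the density argument indicated above. Incidentally, the computation produces \emph{equality} in the statement, reflecting the gain of one full $x$-derivative, i.e.\ the Kato-type local smoothing effect of the Airy propagator.
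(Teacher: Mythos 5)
Your proof is correct and is essentially the paper's argument in rigorous form: the paper computes the same quantity via Plancherel and the formal identity $\delta(\xi^3-\xi'^3)=\delta(\xi-\xi')/(3\xi^2)$, which is exactly your Jacobian cancellation $|\xi|^2\cdot\tfrac{1}{3\xi^2}=\tfrac13$ after the substitution $\tau=\xi^3$. Your pointwise-in-$x$ Kato-smoothing identity is a slightly sharper (and cleaner) way of packaging the same computation.
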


\begin{proof}
By the Plancherel identity, we have
$$\int_{\R^2}a(x)\left||D_x|e^{-t\partial_x^3}u(x)\right|^2dx\,dt=\sqrt{2\pi}\int_{\R^2}\hat{a}(\xi'-\xi)|\xi||\xi'|\hat{u}(\xi)\bar{\hat{u}(\xi')}\delta(\xi^3-\xi'^3)\,d\xi\,d\xi'.$$
Using $\delta(\xi^3-\xi'^3)=\delta(\xi-\xi')/(3\xi^2)$, we deduce
$$\int_{\R^2}a(x)\left||D_x|e^{-t\partial_x^3}u(x)\right|^2dx\,dt=\frac{\sqrt{2\pi}}{3}\int_{\R^2}\hat{a}(0)|\hat{u}(\xi)|^2\,d\xi\le\frac{1}{3}\norm{a}_{L^1}\norm{u}_{L^2}^2$$
\end{proof}

\begin{proof}[Proof of Lemma \ref{lem:compactness-smoothing}] 
 We prove that $|D_x|^\alpha e^{-t\partial_x^3}u_n\to0$ in $L^2_\text{loc}(\R^2)$, which implies the result. Hence, let $K\subset\R^2$ a bounded set, and let us show that $\chi_K|D_x|^\alpha e^{-t\partial_x^3}u_n\to0$ in $L^2(\R^2)$. To this end, let $\epsilon>0$ and $\Lambda>0$. Define $P_\Lambda$ the Fourier multiplier on $L^2_x(\R)$ by $\1(|\xi|\le\Lambda)$, and $P_\Lambda^\perp:=1-P_\Lambda$. We split $\chi_K|D_x|^\alpha e^{-t\partial_x^3}u_n=\chi_KP_\Lambda|D_x|^\alpha e^{-t\partial_x^3}u_n+\chi_KP_\Lambda^\perp|D_x|^\alpha e^{-t\partial_x^3}u_n$, and notice that
 \begin{multline*}
    \norm{\chi_KP_\Lambda^\perp|D_x|^\alpha e^{-t\partial_x^3}u_n}_{L^2_{t,x}}\le\norm{\chi_K e^{x^2}}_{L^\ii_{t,x}}\norm{e^{-x^2}|D_x|e^{-t\partial_x^3}}_{L^2_x\to L^2_{t,x}}\norm{P_\Lambda^\perp|D_x|^{\alpha-1}}_{L^2_x\to L^2_x}\norm{u_n}_{L^2}\\
    \le C_K\Lambda^{\alpha-1},
 \end{multline*}
 for some constant $C_K>0$ independent of $n$, by Lemma \ref{lem:compactness-smoothing} and the boundedness of $(u_n)$ in $L^2(\R)$. Hence, for $\Lambda$ large enough independent of $n$, we have 
 $$\norm{\chi_KP_\Lambda^\perp|D_x|^\alpha e^{-t\partial_x^3}u_n}_{L^2_{t,x}}\le\epsilon.$$ For any fixed $t\in\R$, the operator $\chi_K(t,\cdot)P_\Lambda|D_x|^\alpha e^{-t\partial_x^3}$ is compact on $L^2_x(\R)$, hence 
 $$\chi_K(t,\cdot)P_\Lambda|D_x|^\alpha e^{-t\partial_x^3}u_n\to0$$
 strongly in $L^2_x(\R)$ as $n\to\ii$, by weak convergence of $(u_n)$ in $L^2(\R)$. Furthermore, we always have
 $$\norm{\chi_K(t,\cdot)|D_x|^\alpha P_\Lambda e^{-t\partial_x^3}u_n}_{L^2_x}^2\le C\Lambda^{\alpha+\frac12}\norm{\chi_K(t,\cdot)}_{L^2_x}^2,$$
 with $C>0$ independent of $n$. By Lebesgue's dominated convergence theorem, we deduce that $\chi_KP_\Lambda|D_x|^\alpha e^{-t\partial_x^3}u_n\to0$ in $L^2(\R^2)$ as $n\to\ii$, from which the result follows. 
\end{proof}

\section{Maximizers in the subcritical case}\label{sec:subcritical}

In the subcritical case $\gamma<1/p$, the existence of maximizers is simpler and unconditional. Define 
\begin{equation}
 \cA_{\gamma,p}:=\sup_{u\neq0}\frac{\dps\int_\R\left(\int_\R\left||D_x|^\gamma(e^{-t\partial_x^3}u)(x)\right|^qdx\right)^{p/q}dt}{\norm{u}_{L^2}^p}
\end{equation}
where $q$ is determined by $p$ and $\gamma$ as in \eqref{eq:constraint-exponents}. Then, we can prove the following result.

\begin{theorem}\label{thm:subcrit}
 Let $p>4$, $-1/2<\gamma<1/p$, and $q$ such that $-\gamma+3/p+1/q=1/2$. Then, any maximizing sequence for $\cA_{\gamma,p}$ is precompact up to symmetries and, in particular, there exists maximizers for $\cA_{\gamma,p}$. 
\end{theorem}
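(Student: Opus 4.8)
The plan is to prove the subcritical analogue of Theorem~\ref{thm:MMM-2} and then deduce Theorem~\ref{thm:subcrit} from it exactly as Theorem~\ref{thm:main} was deduced. Writing $\Psi_{\gamma,p}[u](t,x):=|D_x|^\gamma e^{-t\partial_x^3}u(x)$ and
\[
\cA^*_{\gamma,p}:=\sup\Big\{\limsup_{n\to\ii}\norm{\Psi_{\gamma,p}[u_n]}_{L^p_tL^q_x}^p:\ \norm{u_n}_{L^2}=1,\ u_n\rightharpoonup_{\text{sym}}0\Big\},
\]
the proof of Proposition~\ref{prop:MMM} carries over verbatim: it uses only Proposition~\ref{bl}, the a.e.\ convergence $\Psi_{\gamma,p}[r_n]\to0$ for $r_n\rightharpoonup0$ (Lemma~\ref{lem:compactness-smoothing}, valid since $\gamma\in(-1/2,1/p)\subset(-1/2,1)$), the invariance under $G$ of $\norm{\Psi_{\gamma,p}[\,\cdot\,]}_{L^p_tL^q_x}$ and of $\norm{\cdot}_{L^2}$ (which holds precisely because $-\gamma+3/p+1/q=1/2$), and the strict superadditivity of $t\mapsto t^{\alpha/2}$, $\alpha:=\min(p,q)>2$. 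Hence all normalized maximizing sequences for $\cA_{\gamma,p}$ are precompact up to symmetries if and only if $\cA_{\gamma,p}>\cA^*_{\gamma,p}$, and since $\cA_{\gamma,p}>0$ the theorem reduces to showing $\cA^*_{\gamma,p}=0$.

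To prove $\cA^*_{\gamma,p}=0$ I would re-run the proof of Theorem~\ref{thm:MMM-2}. Assume $\cA^*_{\gamma,p}>0$ and let $(u_n)$ be admissible with $\limsup_n\norm{\Psi_{\gamma,p}[u_n]}_{L^p_tL^q_x}^p\ge\tfrac12\cA^*_{\gamma,p}$. Splitting $u_n=u_{n,>}+u_{n,<}$ in frequency and passing to a subsequence one arranges $\limsup_n\norm{\Psi_{\gamma,p}[u_{n,>}]}_{L^p_tL^q_x}^p\gtrsim\cA^*_{\gamma,p}$, $\supp\widehat{u_{n,>}}\subset\R_+$. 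The refined inequality is inherited from the critical case via the Sobolev embedding $\norm{|D_x|^\gamma f}_{L^q_x}\lesssim\norm{|D_x|^{1/p}f}_{L^{q_0}_x}$ with $1/q_0:=1/q+1/p-\gamma$ (here $s:=1/p-\gamma\in(0,1)$ and $2/p+1/q_0=1/2$, so $(p,q_0)$ is a Strichartz pair), which gives $\norm{\Psi_{\gamma,p}[u]}_{L^p_tL^q_x}\lesssim\norm{|D_x|^{1/p}e^{-t\partial_x^3}u}_{L^p_tL^{q_0}_x}$; Corollary~\ref{coro:refined} then produces $g_n=g_{t_n,x_n,\lambda_n^{-1}}\in G$ and $\eta_n\ge1/2$ with $\widehat{g_nu_{n,>}}(\cdot+\eta_n)\rightharpoonup v_>\ne0$ and $\norm{v_>}_{L^2}\ge\gamma_0>0$, where $\gamma_0$ depends only on $p,\gamma,\cA^*_{\gamma,p}$. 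If $\eta_n$ were bounded along a subsequence, then $g_nu_{n,>}$ — and hence $g_nu_n$, since the weak limits of its $\R_+$- and $\R_-$-supported pieces cannot cancel — would have a non-zero weak limit, contradicting $u_n\rightharpoonup_{\text{sym}}0$; thus $\eta_n\to\ii$.

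Set $\delta_n:=1/\eta_n\to0$ and decompose $\widehat{g_nu_{n,>}}(\cdot+\eta_n)=\hat v_>+\hat r^{(2)}_{n,>}+\hat r^{(1)}_{n,>}$, and $\widehat{g_nu_{n,<}}(-\cdot-\eta_n)=\hat v_<+\hat r^{(2)}_{n,<}+\hat r^{(1)}_{n,<}$, as in the proof of Theorem~\ref{thm:MMM-2} ($r^{(2)}\to0$ strongly in $L^2$, $r^{(1)}\rightharpoonup0$ with Fourier support in $[-\eta_n,\ii)$). The change of variables of Lemma~\ref{lem:energy-two-bubbles}, which now produces the weight $|\xi|^\gamma=\delta_n^{-\gamma}|1+\delta_n\xi|^\gamma$, gives
\[
\norm{\Psi_{\gamma,p}[u_n]}_{L^p_tL^q_x}=\norm{e^{ix/\delta_n-2it/\delta_n^2}\,\widetilde T_{\gamma,\delta_n}(v_>+r^{(2)}_{n,>}+r^{(1)}_{n,>})+e^{-ix/\delta_n+2it/\delta_n^2}\,\overline{\widetilde T_{\gamma,\delta_n}(v_<+r^{(2)}_{n,<}+r^{(1)}_{n,<})}}_{L^p_tL^q_x},
\]
where $T_{\gamma,\delta}u(t,x):=\frac1{\sqrt{2\pi}}\int_\R|1+\delta\xi|^\gamma e^{ix\xi+it(3\xi^2+\delta\xi^3)}\hat u(\xi)\,d\xi$ and, crucially, $\widetilde T_{\gamma,\delta}:=\delta^{1/p-\gamma}T_{\gamma,\delta}$. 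Adapting Section~\ref{sec:approximate-operators} one gets: $\widetilde T_{\gamma,\delta}$ is \emph{uniformly} bounded $L^2_x\to L^p_tL^q_x$ (by undoing the scaling, with norm $\norm{\Psi_{\gamma,p}}_{L^2_x\to L^p_tL^q_x}$); $\widetilde T_{\gamma,\delta}u\to0$ strongly in $L^p_tL^q_x$ for each fixed $u$ (since $T_{\gamma,\delta}u\to e^{-3it\partial_x^2}u$ by the stationary-phase argument of Lemma~\ref{lem:approximate-operator}(2), while $\delta^{1/p-\gamma}\to0$); and $\widetilde T_{\gamma,\delta_n}r_n\to0$ a.e.\ whenever $r_n\rightharpoonup0$ with $\supp\widehat{r_n}\subset\{1+\delta_n\xi\ge0\}$ (Lemmas~\ref{lem:local-smoothing-approx}, \ref{lem:local-convergence}; the Schur test is unchanged because the weight cancels, and the mild singularity at $\xi=-1/\delta_n$ occurring when $\gamma<0$ is square-integrable because $\gamma>-1/2$). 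Hence the expression above is a sum of a term converging strongly to $0$ in $L^p_tL^q_x$ (coming from the profile parts $v_>+r^{(2)}_{n,>}$ and $v_<+r^{(2)}_{n,<}$ — null exactly because of the factor $\delta_n^{1/p-\gamma}$) and a term converging to $0$ a.e.\ (coming from $r^{(1)}_{n,>}$ and $r^{(1)}_{n,<}$), so Proposition~\ref{bl} with $\pi_n\equiv0$ yields $\norm{\Psi_{\gamma,p}[u_n]}_{L^p_tL^q_x}^\alpha\le\norm{\Psi_{\gamma,p}[w_n]}_{L^p_tL^q_x}^\alpha+o(1)$, where reversing the change of variables gives $\widehat{w_n}(\eta)=\widehat{r^{(1)}_{n,>}}(\eta-\eta_n)+\widehat{r^{(1)}_{n,<}}(-\eta-\eta_n)$, with $w_n\rightharpoonup_{\text{sym}}0$ and $\norm{w_n}_{L^2}^2=1-\norm{v_>}_{L^2}^2-\norm{v_<}_{L^2}^2+o(1)$, just as in the critical case. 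By the definition of $\cA^*_{\gamma,p}$ this forces $\limsup_n\norm{\Psi_{\gamma,p}[u_n]}_{L^p_tL^q_x}^\alpha\le(\cA^*_{\gamma,p})^{\alpha/p}(1-\gamma_0^2)^{\alpha/2}$, and taking the supremum over admissible $(u_n)$ gives $(\cA^*_{\gamma,p})^{\alpha/p}\le(\cA^*_{\gamma,p})^{\alpha/p}(1-\gamma_0^2)^{\alpha/2}$, which since $\gamma_0>0$ forces $\cA^*_{\gamma,p}=0$.

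I expect the main obstacle to be the bookkeeping needed to transplant Section~\ref{sec:approximate-operators} to general $\gamma$: unlike in the critical case, $T_{\gamma,\delta}$ is not uniformly bounded (its norm is $\asymp\delta^{\gamma-1/p}$), so one has to carry the normalized operator $\widetilde T_{\gamma,\delta}=\delta^{1/p-\gamma}T_{\gamma,\delta}$ through all of Sections~\ref{sec:refined}--\ref{sec:approximate-operators}. Conceptually, however, this is the entire mechanism behind the unconditional conclusion: the factor $\delta^{1/p-\gamma}$ is a \emph{positive} power of $\delta_n=1/\eta_n\to0$ precisely when $\gamma<1/p$, and it annihilates the escaping two-bubble configuration that in the critical case (Theorem~\ref{thm:MMM-2}) contributes $a_p\cS_p$ and thereby forces the hypothesis \eqref{eq:main-condition}.
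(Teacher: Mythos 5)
Your argument is correct and rests on the same mechanism as the paper's proof: reduce, via the subcritical analogue of Proposition \ref{prop:MMM}, to showing $\cA^*_{\gamma,p}=0$; extract a profile escaping to frequency $\eta_n\to\ii$ by means of a refined estimate; and observe that after rescaling the profile's contribution carries the factor $\delta_n^{1/p-\gamma}\to 0$, which is positive precisely because $\gamma<1/p$ and which annihilates the escaping bubble. You diverge from the paper in two technical choices, both legitimate. First, you obtain the subcritical refined estimate from the critical one via the Sobolev embedding $\norm{|D_x|^\gamma f}_{L^q_x}\lesssim\norm{|D_x|^{1/p}f}_{L^{q_0}_x}$ with $1/q_0=1/q+1/p-\gamma$ (note that the condition $1/p-\gamma<1/q_0$ required for this embedding is exactly equivalent to $q<\ii$, so it holds on the whole admissible range); the paper instead interpolates $(\gamma,p,q)$ between $(\tilde\gamma,p,\tilde q)$ and $(1/p,p,2p/(p-4))$ using Proposition \ref{prop:complex-interpolation}. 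Second, you transplant the entire two-bubble apparatus of Theorem \ref{thm:MMM-2} — the splitting into positive and negative frequencies, the extraction of both $v_>$ and $v_<$, the decomposition of the remainders into $r^{(1)}$ and $r^{(2)}$ parts, and the almost-everywhere convergence of the approximate operators needed to invoke Proposition \ref{bl} (which in turn obliges you to redo Lemmas \ref{lem:local-smoothing-approx} and \ref{lem:local-convergence} with the weight $|1+\delta\xi|^\gamma$). The paper's proof points out that all of this is unnecessary in the subcritical case: since the whole profile contribution $|\delta_n|^{1/p-\gamma}T_{\gamma,\delta_n}v$ tends to zero \emph{strongly} in $L^p_tL^q_x$, one extracts a single profile $v$ without distinguishing positive from negative frequencies (the $\pm\eta_n$ resonance responsible for $a_p$ only matters when the bubbles survive in the limit) and removes it by the plain triangle inequality, yielding \eqref{eq:MMM-subcrit} as an equality of $p$-th powers. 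Indeed, your own application of Proposition \ref{bl} with $\pi_n\equiv 0$ is equivalent to the triangle inequality together with the strong convergence of the $\sigma_n$-part, so the almost-everywhere convergence input is redundant. Your identification of the factor $\delta^{1/p-\gamma}$ as the conceptual reason the result is unconditional is exactly right.
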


This result with $p=q=8$ is due to \cite{HunSha-12}.

\begin{remark}
 The same result holds for real-valued functions, with the same proof.
\end{remark}

\begin{proof}[Proof of Theorem \ref{thm:subcrit}]
 We mimic the proof in Section \ref{sec:MMM}. The analogue of Proposition \ref{prop:MMM} is valid, with the same proof using Lemma \ref{lem:compactness-smoothing} and the condition $\gamma>-1/2$. We now show that $\cA_{\gamma,p}^*=0$, from which the result follows. To do so, we argue by contradiction and assume $\cA_{\gamma,p}^*>0$, and let $(u_n)$ a sequence such that $\norm{u_n}_{L^2}=1$, $u_n\rightharpoonup_{\text{sym}}0$, and 
 $$\limsup_{n\to\ii}\norm{|D_x|^\gamma e^{-t\partial_x^3}u_n}_{L^p_tL^q_x}^p\ge\frac12\cA^*_{\gamma,p}.$$
 In particular, we have 
 $|D_x|^\gamma e^{-t\partial_x^3}u_n\not\rightarrow0$ in $L^p_t L^q_x$. In the subcritical case, we have results identical to Corollary \ref{coro:refined-pq} and Corollary \ref{coro:refined} by interpolating $(\gamma,p,q)$ between $(\tilde{\gamma},p,\tilde{q})$ and $(1/p,p,2p/(p-4))$ with $-1/2<\tilde{\gamma}<\gamma$ and using Proposition \ref{prop:complex-interpolation}. Hence, there exists $(g_n)\subset G$ and $(\eta_n)\subset\R$ with $|\eta_n|\ge1/2$ such that $(\hat{g_nu_n}(\cdot+\eta_n))$ has a non-zero weak limit $v$ in $L^2$ (here, we do not need to distinguish between positive and negative frequencies), with a lower bound
 $$\norm{v}_{L^2}\ge\epsilon>0,$$
 where $\epsilon$ only depends on $p,\gamma$. Again, we must have $|\eta_n|\to\ii$. Writing again $\delta_n:=1/\eta_n\to0$ and $\hat{g_nu_n}(\cdot+\eta_n)=\hat{v}+\hat{r_n}$, we have
 $$\norm{|D_x|^\gamma e^{-t\partial_x^3}u_n}_{L^p_tL^q_x}=|\delta_n|^{\frac1p-\gamma}\norm{T_{\gamma,\delta_n}v+T_{\gamma,\delta_n}r_n}_{L^p_tL^q_x},$$
 where the approximate operator $T_{\gamma,\delta}$ is 
 $$(T_{\gamma,\delta}u)(x):=\frac{1}{\sqrt{2\pi}}\int_\R|1+\delta\xi|^\gamma e^{ix\xi+it(3\xi^2+\delta\xi^3)}\hat{u}(\xi)\,d\xi.$$
 As in Lemma \ref{lem:approximate-operator}, we have
 $$\norm{T_{\gamma,\delta}}_{L^2_x\to L^p_t L^q_x}=C\delta^{\gamma-\frac1p},$$
 and for all $u\in L^2_x(\R)$ and all $\gamma<1/p$,
 $$\lim_{\delta\to0}|\delta|^{\frac1p-\gamma}\norm{T_{\gamma,\delta}u}_{L^p_tL^q_x}=0.$$
 As a consequence, we find that 
 $$\norm{|D_x|^\gamma e^{-t\partial_x^3}u_n}_{L^p_tL^q_x}=|\delta_n|^{\frac1p-\gamma}\norm{T_{\gamma,\delta_n}r_n}_{L^p_tL^q_x}+o_{n\to\ii}(1),$$
 and undoing the change of variables shows that 
 \begin{equation}\label{eq:MMM-subcrit}
    \norm{|D_x|^\gamma e^{-t\partial_x^3}u_n}_{L^p_tL^q_x}^p=\norm{|D_x|^\gamma e^{-t\partial_x^3}w_n}_{L^p_tL^q_x}^p+o_{n\to\ii}(1),
 \end{equation}
 where $\hat{w_n}=\hat{r_n}(\cdot-\eta_n)$. By weak convergence of $r_n$ to zero, we know that
 $\norm{r_n}_{L^2}^2\to1-\norm{v}_{L^2}^2$. Hence, as in the proof of Theorem \ref{thm:MMM-2}, we have $w_n\rightharpoonup_\text{sym}0$ and 
 $$\limsup_{n\to\ii}\norm{|D_x|^\gamma e^{-t\partial_x^3}w_n}_{L^p_tL^q_x}^p\le\cA_{\gamma,p}^*(1-\norm{v}_{L^2}^2)^{p/2},$$
 which we insert in \eqref{eq:MMM-subcrit} to obtain
 $$\limsup_{n\to\ii}\norm{|D_x|^\gamma e^{-t\partial_x^3}u_n}_{L^p_tL^q_x}^p\le\cA_{\gamma,p}^*(1-\norm{v}_{L^2}^2)^{p/2}\le\cA_{\gamma,p}^*(1-\epsilon^2)^{p/2}.$$
 Taking the supremum over all such sequences $(u_n)$, we find
 $$\cA^*_{\gamma,p}\le\cA_{\gamma,p}^*(1-\epsilon^2)^{p/2}<\cA^*_{\gamma,p},$$
 leading to a contradiction. We thus have $\cA_{\gamma,p}^*=0$, which finishes the proof.
 \end{proof}
 
 \section{Symmetries for extension problems}\label{sec:symm}
 
 In this section we show that the argument provided in Lemma \ref{lem:real-complex} about real-valuedness of maximizers extends to a more general setting. A similar remark was made independently in \cite{BroOliQui-18}. If $N\ge1$, $S\subset\R^N$, $\sigma$ is a Borel measure on $S$, and $f\in L^1(S,\sigma)$, we define its Fourier transform as
 $$\forall x\in\R^N,\,\check{f}(x):=\int_S e^{ix\cdot\omega}f(\omega)\,d\sigma(\omega).$$
 Previously, we considered the case $N=2$, $S=\{(\xi,\xi^3),\,\xi\in\R\}$ the cubic curve and the measure $\sigma$ being the push-forward of the measure $|\xi|^\gamma d\xi$ through the map 
 $\xi\in\R\mapsto(\xi,\xi^3)\in S$. Notice that in the optimization problem \eqref{eq:best-airy}, the $L^2$-norm is taken with respect to another measure on $S$ than $\sigma$. As a consequence, let $\sigma'$ be another Borel measure on $S$, and $q\ge2$. Define
 $$\cM(S,\sigma,\sigma',q):=\sup\left\{\frac{\norm{\check{f}}_{L^q(\R^N)}}{\norm{f}_{L^2(S,\sigma')}},\ f\in L^1(S,\sigma)\cap L^2(S,\sigma')\setminus\{0\}\right\}$$
 and, under the symmetry assumption $S=-S$, also its `symmetric' version
 $$\cM_{\text{sym}}(S,\sigma,\sigma',q):=\sup\left\{\frac{\norm{\check{f}}_{L^q(\R^N)}}{\norm{f}_{L^2(S,\sigma')}},\ f\in L^1(S,\sigma)\cap L^2(S,\sigma')\setminus\{0\},\ f(\omega)=\bar{f(-\omega)}\ \text{a.e. on}\ S\right\}.$$
 We then have the following statement.

 \begin{lemma}
   Let $N\ge1$, $S\subset\R^N$ and $\sigma$, $\sigma'$ Borel measures on $S$. Assume that $(S,\sigma)$ and $(S,\sigma')$ are symmetric with respect to the origin, that is, $S=-S$ and $\sigma(A)=\sigma(-A)$, $\sigma'(A)=\sigma'(-A)$ for all $A$ Borel subset of $S$. Then, for any $q\ge2$, we have 
   $$\cM(S,\sigma,\sigma',q)=\cM_{\text{sym}}(S,\sigma,\sigma',q).$$
Moreover, there is an optimizer for $\cM(S,\sigma,\sigma',q)$ if and only if there is one for $\cM_{\text{sym}}(S,\sigma,\sigma',q)$.
 \end{lemma}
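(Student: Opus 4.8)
The plan is to imitate the proof of Lemma \ref{lem:real-complex}, replacing the decomposition of a complex-valued function into real and imaginary parts by the decomposition of a function on $S$ into its ``symmetric'' and ``antisymmetric'' parts. The inequality $\cM(S,\sigma,\sigma',q)\ge\cM_{\text{sym}}(S,\sigma,\sigma',q)$ is trivial, since the supremum defining the right-hand side runs over a subset of the functions allowed on the left. So the real task is the reverse inequality. Given $f\in L^1(S,\sigma)\cap L^2(S,\sigma')$ with $f\neq 0$, I would set
$$
f_+(\omega):=\tfrac12\left(f(\omega)+\bar{f(-\omega)}\right),\qquad f_-(\omega):=\tfrac{1}{2i}\left(f(\omega)-\bar{f(-\omega)}\right),
$$
so that $f=f_++if_-$ and each $f_\pm$ satisfies the reality constraint $f_\pm(\omega)=\bar{f_\pm(-\omega)}$ a.e.\ on $S$; moreover $f_\pm\in L^1(S,\sigma)\cap L^2(S,\sigma')$ because $\sigma$ and $\sigma'$ are invariant under $\omega\mapsto-\omega$.

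Using this invariance of $\sigma$ one computes, for $x\in\R^N$,
$$
\int_S e^{ix\cdot\omega}\bar{f(-\omega)}\,d\sigma(\omega)=\int_S e^{-ix\cdot\omega}\bar{f(\omega)}\,d\sigma(\omega)=\bar{\check f(x)} \,,
$$
hence $\check{f_+}=\re\check f$ and $\check{f_-}=\im\check f$; in particular both are real-valued and $|\check f|^2=|\check{f_+}|^2+|\check{f_-}|^2$ pointwise. Applying the triangle inequality in $L^{q/2}(\R^N)$ (legitimate since $q\ge 2$) and then the definition of $\cM_{\text{sym}}$ gives
$$
\norm{\check f}_{L^q(\R^N)}^2\le\norm{\check{f_+}}_{L^q(\R^N)}^2+\norm{\check{f_-}}_{L^q(\R^N)}^2\le \cM_{\text{sym}}(S,\sigma,\sigma',q)^2\left(\norm{f_+}_{L^2(S,\sigma')}^2+\norm{f_-}_{L^2(S,\sigma')}^2\right).
$$
Expanding the two squares (the parallelogram identity turns $|f_+|^2+|f_-|^2$ into $\tfrac12(|f(\omega)|^2+|f(-\omega)|^2)$) and invoking the invariance of $\sigma'$ under $\omega\mapsto-\omega$ yields $\norm{f_+}_{L^2(S,\sigma')}^2+\norm{f_-}_{L^2(S,\sigma')}^2=\norm{f}_{L^2(S,\sigma')}^2$. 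Dividing by $\norm{f}_{L^2(S,\sigma')}^2$ and taking the supremum over $f$ gives $\cM\le\cM_{\text{sym}}$, hence equality.

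For the statement about optimizers, the implication ``$\cM_{\text{sym}}$ has an optimizer $\Rightarrow$ $\cM$ has one'' is immediate from the equality of the two constants, since any admissible competitor for $\cM_{\text{sym}}$ is one for $\cM$. For the converse, I would suppose $f$ is an optimizer for $\cM$ and track the equality cases in the chain above: since nonnegative numbers with $a_1\le b_1$, $a_2\le b_2$ and $a_1+a_2=b_1+b_2$ must satisfy $a_1=b_1$, $a_2=b_2$, both $\norm{\check{f_\pm}}_{L^q(\R^N)}^2=\cM_{\text{sym}}^2\norm{f_\pm}_{L^2(S,\sigma')}^2$ hold; as $f\neq0$, at least one of $f_+,f_-$ is nonzero and it is then an optimizer for $\cM_{\text{sym}}$. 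I do not expect any genuine obstacle here; the only point that requires care is the bookkeeping of the two symmetry hypotheses — the invariance of $\sigma$ is what identifies $\check{f_\pm}$ with $\re\check f$ and $\im\check f$, while the invariance of $\sigma'$ is what lets the $L^2(S,\sigma')$-norm split — exactly as in the proof of Lemma \ref{lem:real-complex}.
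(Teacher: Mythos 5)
Your proof is correct and follows essentially the same route as the paper's: the same decomposition $f=f_++if_-$ into conjugate-symmetric parts, the same identification of $\check{f_\pm}$ with $\re\check f$ and $\im\check f$ via the invariance of $\sigma$, the triangle inequality in $L^{q/2}$, the splitting of the $L^2(S,\sigma')$-norm via the invariance of $\sigma'$, and equality-case tracking for the optimizer statement. No gaps.
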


We emphasize that in the definition of $\cM(S,\sigma,\sigma',q)$ and $\cM_{\text{sym}}(S,\sigma,\sigma',q)$ we impose the condition $f\in L^1(S,\sigma)$ only in order to have $\check f$ a priori well-defined. Once it is shown that $\cM(S,\sigma,\sigma',q)<\infty$ it follows that $\check f\in L^q(\R^N)$ for any $f\in L^2(S,\sigma')$ and the condition $f\in L^1(S,\sigma)$ can be dropped. In particular, for an optimizer for $\cM(S,\sigma,\sigma',q)$ or $\cM_{\text{sym}}(S,\sigma,\sigma',q)$ we do not require this condition.
 
 \begin{remark}
  This result applies to $S=\Sph^{N-1}$ and $\sigma=\sigma'$ is the standard surface measure on $\Sph^{N-1}$, which is the case of the Stein--Tomas theorem. 
 \end{remark}

 \begin{proof}
  Since the inequality $\ge$ is trivial, let us prove the inequality $\le$. Let $f\in L^1(S,\sigma)\cap L^2(S,\sigma')$, $f\neq0$. We split $f=f_1+if_2$ where 
  $$f_1(\omega):=\frac{f(\omega)+\bar{f(-\omega)}}{2},\quad f_2(\omega):=\frac{f(\omega)-\bar{f(-\omega)}}{2i},$$
  so that $f_1(-\omega)=\bar{f_1(\omega)}$ and $f_2(-\omega)=\bar{f_2(\omega)}$ for a.e. $\omega\in S$. Using the symmetry of $(S,\sigma)$, we deduce that $\check{f_1}$ and $\check{f_2}$ are real-valued, so that $|\check{f}|^2=|\check{f_1}|^2+|\check{f_2}|^2$ and hence by the triangle inequality in $L^{q/2}(\R^N)$
  \begin{align*}
   \norm{\check{f}}_{L^q(\R^N)} &= \norm{|\check{f_1}|^2+|\check{f_2}|^2}_{L^{q/2}(\R^N)}^{1/2}\\
   &\le \left(\norm{\check{f_1}}_{L^q(\R^N)}^2+\norm{\check{f_2}}_{L^q(\R^N)}^2\right)^{1/2}\\
   &\le \cM_{\text{sym}}(S,\sigma,\sigma',q)\left(\norm{f_1}_{L^2(S,\sigma')}^2+\norm{f_2}_{L^2(S,\sigma')}^2\right)^{1/2}.
  \end{align*}
We notice now that $|f_1(\omega)|^2+|f_2(\omega)|^2=(1/2) (|f(\omega)|^2 + |f(-\omega)|^2)$ for all $\omega\in S$, and therefore, by symmetry of $S$ and $\sigma'$,
$$
\norm{f_1}_{L^2(S,\sigma')}^2+\norm{f_2}_{L^2(S,\sigma')}^2 = \|f\|_{L^2(S,\sigma')}^2 \,.
$$
By taking the supremum over all $f$, we therefore obtain the inequality $\le$ in the lemma. Moreover, if $f$ is an optimizer and the suprema are finite, then tracking the case of equality shows that either $f_1$ or $f_2$ is also a maximizer, showing the desired property.
 \end{proof}

 \begin{remark}
  The previous proof clearly extends to mixed Lebesgue spaces (as in the case of the cubic curve), as long as the Lebesgue exponents are greater than $2$.
 \end{remark}

%
%

\end{document}